\newtheorem{theorem}{Theorem}[section]
\newtheorem{lemma}[theorem]{Lemma}
\renewcommand{\arraystretch}{1.1}
\newcommand{\refpart}[1]{{\it (#1)}}                 
\newcommand{\ppt}{\hspace{1pt}}                   
\newcommand{\heun}[5]{\mbox{\rm Hn}\!\left( {#1 \atop #2} \left|  {#3   \atop #4} \right|\, #5 \right) }
\newcommand{\heunoppa}[5]{\mbox{\rm Hn}(#1,#2\,|\,#3;\,#4\,|\,#5)}
\newcommand{\hpgoppa}[5]{{}_{#1}{\rm F}_{#2}(#3;#4\,|\,#5)}
\newcommand{\heuno}{\mbox{\rm Hn}}
\newcommand{\hpgo}[2]{{}_{#1}\mbox{\rm F}_{\!#2}}
\newcommand{\hpg}[5]{{}_{#1}\mbox{\rm F}_{\!#2}\!
  \left(\left.{#3 \atop #4}\right|\, #5 \right) }
\newcommand{\led}[1]{#1}                                    
\newcommand{\hpgde}[1]{E(#1)}                         
\newcommand{\heunde}[1]{\mbox{\it HE\ppt}(#1)}  
\newcommand{\branch}[3]{#1\,=\,#2\,=\,#3}      
\newcommand{\brep}[2]{$[#1]_{#2}\ppt$}              
\newcommand{\comp}[2]{${#2}\cdot{#1}$}
\newcommand{\compp}[3]{${#3}\cdot{#2}\cdot{#1}$}
\newcommand{\nocomposition}{\mbox{\rm indecomposable}}
\newcommand{\degr}{D}    
\newcommand{\la}{\alpha}                                     
\newcommand{\lb}{\beta}
\newcommand{\lc}{\gamma}
\newcommand{\ld}{\delta}
\newcommand{\PR}[1]{\mbox{\rm P#1}}       
\newcommand{\PH}[1]{\mbox{$H_{#1}$}}        
\newcommand{\GT}[1]{#1}       
\newcommand{\HT}[1]{{#1}_H}   
\newcommand{\CT}[1]{{#1}_C}   
\newcommand{\AT}[1]{{#1}_A}   
\newcommand{\BT}[1]{{#1}_B}   
\newcommand{\DDT}{2\!\times\!2}  
\newcommand{\pback}[1]{\stackrel{\ppt#1}{\longleftarrow}}   
\newcommand{\A}{A}
\newcommand{\B}{B}
\newcommand{\C}{C}
\newcommand{\PP}{{\Bbb P}}
\newcommand{\ZZ}{{\Bbb Z}}
\newcommand{\CC}{{\Bbb C}}
\newcommand{\QQ}{{\Bbb Q}}
\newcommand{\equal}{&\!\!\!=\!\!\!&}
\newcommand{\fr}[2]{#1 / \,#2}
\begin{document}


\title{Parametric transformations between the Heun \\ and Gauss hypergeometric functions}

\author{Raimundas Vidunas\footnote{Faculty of Mathematics, Kobe University, Rokko-dai 1-1, Nada-ku,
        657-8501 Kobe, Japan. E-mail: vidunas@math.kobe-u.ac.jp},
        Galina Filipuk\footnote{Faculty of Mathematics, Informatics, and Mechanics, University of
        Warsaw, Banacha 2, 02-097 Warsaw, Poland. E-mail:
        filipuk@mimuw.edu.pl}}
\date{}
\maketitle

\begin{abstract}
The hypergeometric and Heun functions are classical special functions.
Transformation formulas between them are commonly induced by pull-back transformations
of their differential equations, with respect to some coverings $\PP^1\to\PP^1$.  
This gives expressions of Heun functions in terms of better understood hypergeometric functions.
This article presents the list of hypergeometric-to-Heun pull-back transformations with a
free continuous parameter, and illustrates most of them by a Heun-to-hypergeometric reduction formula.
In total, 61 parametric transformations exist, of maximal degree~12.
\end{abstract}

\section{Introduction}

The Gauss hypergeometric function $\hpgoppa{2}{1}{a,b}{c}{z}$
and the local Heun function $\heunoppa{t}{q}{a,b}{c,d}{x}$ 
are classical special functions, holomorphic in a neighborhood of $z=0$, respectively $x=0$. 
They are solutions of canonical second-order Fuchsian differential equations 
on the Riemann sphere~$\PP^1$, having $3$ or~$4$ regular singular points, respectively.  
The Fuchsian equations are the \emph{Gauss hypergeometric equation} 
and the {\em Heun equation}~\cite{BE}.
We present these equations and functions soon, in formulas
(\ref{HGE})--(\ref{eq:HF}).

The special functions $\hpgo{2}{1}$ and~$\heuno$ satisfy many identities
such as 
\begin{align}
  \label{eq:first}
  \hpg{2}{1}{2a,\,2b\,}{a+b+\tfrac12}{\,x}
  &=\hpg{2}{1}{a,\,b\,}{a+b+\tfrac12}{\,4x(1-x)}, 
\\  \label{eq:second}
 \heun{{\tfrac12}}{2ab}{2a,\,2b}{c,\;c
 }{x}&=\hpg{2}{1}{a,\,b}{c}{\,4x(1-x)}. 
\end{align}
The former is Gauss' quadratic transformation of~$\hpgo{2}{1}$, 
and the latter is a well-known generalization with an extra free parameter.  
It can be viewed as
a \emph{Heun-to-hypergeometric}, or \emph{Heun-to-Gauss, reduction}.

Transformations between hypergeometric functions were first systematically investigated by
Goursat~\cite{Goursat1881}.  A~complete classification, with a few sets of unpredicted
transformations, was recently performed by the first author~\cite{VidunasFE}.
Several Heun-to-Gauss reduction formulas we found my Maier~\cite{Maier}.
These transformations are based on a rational map
$z=\varphi(x)$, such as $\varphi(x)=4x(1-x)$. 

Generally, the considered transformations are induced by {\em pull-back transformations}
of Fuchsian equations 
of the form
\begin{equation} \label{eq:algtransf}
z\longmapsto\varphi(x), \qquad y(z)\longmapsto
Y(x)=\theta(x)\,y(\varphi(x)),
\end{equation}
Here $\varphi(x)$ is a rational function representing the pullback covering
along which a hypergeometric (or more general Fuchsian) equation is \emph{lifted}
or \emph{pulled back}. The \emph{gauge prefactor} $\theta(x)$ is a radical
function (i.e., a product of powers of rational functions). It is usually chosen 
to yield a pulled-back equation with fewer singularities and standard values of local exponents.
The \emph{degree} of a pull-back transformation is the degree of $\varphi(x)$.
The considered  {\em hypergeometric-to-Heun} pull-back transformations
will be called  {\em Gauss-to-Heun} transformations (or pull-backs) for brevity.
We encounter {\em Heun-to-Heun} and  {\em Gauss-to-Gauss} (or just {\em hypergeometric}) 
transformations as well.

This article 
focuses on the Heun-to-hypergeometric reductions
\begin{equation}
\label{eq:H-2F1 gen}
 \heun{t}{q}{a,\,b}{c,\,d}{x}=\theta(x)\,\hpg{2}{1}{\A,\,\B}{\C}{\varphi(x)}.
\end{equation}
induced by the pull-back coverings from hypergeometric to Heun equations, with at least 
one free parameter. Quadratic transformations such as (\ref{eq:second}) were first indicated 
by Kuiken \cite{Kuiken}. The several transformations of Maier~\cite{Maier} 
are all parametric Heun-to-Gauss 
reductions without the prefactor $\theta(x)$. 

In a parallel article \cite{HeunClass}, the authors 
classify the pull-back coverings appearing in parametric Heun-to-Gauss reductions.
The present article spells out the list of parametric Heun equations reducible to the hypergeometric
ones via pull-back transformations (\ref{eq:algtransf}), and gives an extensive list of
transformation formulas between the Heun and hypergeometric functions.
Up to M\"obius fractional-linear transformations, there are 61 different cases
of Heun-to-Gauss reductions (excluding infinite families of transformations
from hypergeometric equations with cyclic or a dihedral monodromy).
Among these 61 Heun-to-Gauss reductions, 28 are compositions of lower degree 
transformations among Heun and hypergeometric functions.
The maximal degree of a parametric Heun-to-Gauss reduction is 12.

The coverings that occur in these pull-back transformations turn~out to be \emph{Belyi maps}, 
in the sense that they have at~most three critical values on~$\PP_z^1$.  The coverings characteristically
branch only above the singular points $z=0$, $z=1$, $z=\infty$ of the hypergeometric equation.
The four singular points of Heun's equation  lie above the same set
$\{0,\,1,\,\infty\}\subset \PP_z^1$. 
In the 61 Heun-to-hypergeometric reductions, 48 different Belyi coverings are involved.
Herfurtner's list  \cite{Herfurtner91} of elliptic surfaces with 4 singular fibers contains
38 of these coverings as 
Klein's $\cal J$-invariants of the elliptic surfaces.

This article along with  \cite{HeunClass} skips the degenerate cases of parametric
pull-back transformations from the hypergeometric equations with cyclic or dihedral monodromy.
There are pull-backs to Heun equations of any degree from
these hypergeometric equations, as presented in \cite{VidunasHDD}. Morover,
non-Belyi coverings can occur in parametric pull-backs from dihedral hypergeometric equations
\cite[Proposition 2.3]{HeunClass}.
Together with van Hoeij, the first author has already started to classify Heun-to-Gauss reductions
without any free parameter in the so-called {\em hyperbolic case} \cite{VidunasHoeij}.

Here is the content and the structure of the article.
Section \ref{belyiclass} recalls the list of 61 Heun-to-Gauss reductions
(up to M\"obius transformations) obtained in \cite{HeunClass}. 
In \S \ref{heunclass} this list is rewritten in an order convenient for answering the following basic question:
\begin{quote}
Is a given Heun function with a free parameter reducible to Gauss hypergeometric functions
by a pull-back transformation?
\end{quote}
The newly ordered Heun equations have the labels P1 to P61.
In \S \ref{tvalues} we inspect the $t$-values of the reducible Heun functions
and make an arithmetic observation.
Section \ref{heunhpgid} explains how to obtain identities between Heun and hypergeometric
functions induced by the listed pull-back transformations.
Section \ref{sec:hpghe} is a comprehensive survey on Heun-to-Gauss reductions,
including brief overviews of Gauss-to-Gauss and Heun-to-Heun transformations.
Only some less interesting one-parameter composite Heun-to-Gauss reductions 
are not exemplified by a formula.
Appendix A reminds the symmetries of hypergeometric and Heun equations.
Appendix B reviews the composite transformations among the Heun-to-Gauss reductions.
Appendix C gives additional invariants (of the fractional-linear transformations) to identify 
the reducible Heun equations completely.

Before presenting the transformation lists and formulas,
we briefly recall that the hypergeometric and Heun equations are 
\begin{equation} \label{HGE}
\frac{d^2y(z)}{dz^2}+
\left(\frac{\C}{z}+\frac{\A+\B-\C+1}{z-1}\right)\,\frac{dy(z)}{dz}+\frac{\A\,\B}{z\,(z-1)}\,y(z)=0
\end{equation}
and, respectively,
\begin{equation}\label{Heun}
\frac{d^2y(x)}{dx^2}+\biggl(\frac{c}{x}+\frac{d}{x-1}+\frac{a+b-c-d+1}{x-t}\biggr)\frac{dy(x)}{dx}
+\frac{abx-q}{x(x-1)(x-t)}y(x)=0.
\end{equation}
They are canonical second-order Fuchsian differential equations on the Riemann sphere~$\PP^1$,
having $3$ and~$4$ regular singular points respectively \cite{BE}. 
Any 
second order Fuchsian 
equation with $3$ or $4$ 
singularities can be transformed to them by M\"obius transformations.
The singular points of these equations are $z=0$, $z=1$, $z=\infty$ and
$x=0$, $x=1$, $x=t$, $z=\infty$. The information about the singularities and local
exponents is encoded in the Riemann $P$-schemes for these equations:
\begin{equation} \label{eq:P}
P\left\{\begin{array}{ccc} 0 & 1 & \infty
\\ 0 & 0 & \A \\ 1-\C & \C-\A-\B & \B
\end{array} \;z\; \right\}, \qquad
P\left\{\begin{array}{cccc} 0 & 1 &t & \infty
\\ 0 & 0 &0 & a \\ 1-c & 1-d & c+d-a-b & b
\end{array} \;x\; \right\},
\end{equation}
so that the local exponents at $z=0$ for the hypergeometric equation are $0,1-C$, etc.
Recall that Fuchsian equations with 3 singularities are defined
uniquely by their singularities and the local exponents. This is not
generally true for Fuchsian equations with more singularities.
In particular, $q$ is an {\em accessory parameter} of Heun's equation.

The local solution at $z=0$ with the local exponent $0$ and the value $1$
of the hypergeometric equation is the well-known Gauss hypergeometric series:
\begin{equation} \label{gauss2f1}
\hpg{2}{1}{\A,\,\B\,}{\C}{\,z}= 
\sum_{n=0}^{\infty}\frac{(\A)_n\,(\B)_n}{(\C)_n\,n!}\,z^n.
\end{equation}
The local solution at $x=0$ with the local exponent $0$ and the value $1$
of Heun's equation is denoted by
\begin{equation} \label{eq:HF}
\heun{\,t\,}{q}{\,a,\,b\,}{c;\,d}{\,x\,}.
\end{equation}
The power series $\sum_{n=0}^{\infty} h_n x^n$ for this \emph{(local) Heun function} is
rather complicated. Its coefficients $h_n$ satisfy a  second order linear
recurrence relation \cite{MaierPH},  with the coefficients quadratic in $n$.
Provided that $c$ is not a non-positive integer  (to avoid division by zero),
$\heuno(x)$ is defined and holomorphic at least on $|x|<\min(1,|t|)$.
The Heun function degenerates to the $\hpgo21$ function if $d=a+b-c+1$ and $q=a b t$.
Notice the corresponding degeneration of Heun's equation to (\ref{HGE}). If $q\neq a b t$,
the point $x=t$ becomes logarithmic rather than ordinary.
The Heun function is identical to the constant 1 if $ab=0$
and $q=0$. Note that the parameters $a, b$ are symmetric and give
the local exponents at $x=\infty$, whereas the parameters $c$ and
$d$ are not interchangeable and determine the non-zero local
exponents at $x=0$ and $x=1$, respectively.

The Heun equation contains a large number of interesting special cases.
In particular, the Lam\'e equation \cite{BE} is the most studied case
(it is basically $a+b=c=d=1/2$)
and has considerable importance in mathematical physics \cite{BE, Ronveaux}.
The Heun equation appears in problems such as
diffusion, wave propagation, magneto-hydrodynamics, heat and mass
transfer, particle physics and cosmology of the very early universe.
Heun functions are much less  understood than the hypergeometric
functions. Particularly, no general elementary integral representation of
Heun functions is known. It is thus generally desirable to have expressions of
Heun's and especially Lam\'e functions in terms of more elementary functions.

\section{Two classifications of Heun-to-hypergeometric reductions}
\label{belyiclass}

In this section we first recall the classification of Heun-to-Gauss reductions in \cite{HeunClass}
up to M\"obius transformations,
then we rewrite the list in the order convenient for finding out  whether an encountered
Heun function or equation is reducible to a hypergeometric one. An arithmetic observation
on the $t$-parameters of the reducible Heun functions is made in \S \ref{tvalues}.

First we introduce some notation. From (\ref{eq:P}) it is clear that the local exponent differences
of the hypergeometric equation at the singular points are $1-C$, $C-A-B$, $A-B$, while the 
exponent differences of Heun's equation are $1-c$, $1-d$, $c+d-a-b$, $a-b$.
As in \cite{HeunClass}, let $\hpgde{\la,\lb,\lc}$ denote a hypergeometric equation with the 
exponent differences $\la,\lb,\lc$, and let $\heunde{\la,\lb,\lc,\ld}$
denote a Heun equation with the 
exponent differences $\la,\lb,\lc,\ld$.

A pull-back transformation of degree $\degr$ from a hypergeometric equation
$\hpgde{\la_1,\lb_1,\lc_1}$ to Heun's equation $\heunde{\la_2,\lb_2,\lc_2,\ld_2}$
is denoted by $\hpgde{\la_1,\lb_1,\lc_1}\pback{\degr}\heunde{\la_2,\lb_2,\lc_2,\ld_2}$.
Sometimes we indicate the pullback covering more specifically by putting
a subscript to the degree $D$. Similarly, a pull-back between hypergeometric equations 
is denoted by $\hpgde{\la_1,\lb_1,\lc_1}\pback{\degr}\hpgde{\la_2,\lb_2,\lc_2}$.
Like in the similar notation $(\la_1,\lb_1,\lc_1)\pback{\degr}(\la_2,\lb_2,\lc_2)$ in \cite{VidunasFE},
the arrow follows the direction of the covering $\varphi:\PP_x^1\to\PP^1_z$.
Our notation points to existence of differential equations with the given 
exponent differences and related by an indicated pull-back transformation, rather 
than to existence of pull-backs 
between any differential equations with the given 
exponent differences. The order of 
exponent differences on both sides of the arrow is irrelevant for us,
as we do not assign them to particular singularities by this notation.

From now on, let $\omega$ denote the cubic root of unity $\exp(2\pi i/3)$.

\subsection{The starting classification}

The parallel paper \cite{HeunClass} classifies the pull-back transformations (\ref{eq:algtransf})
between the hypergeometric and Heun equations with a free continuous parameter 
up to M\"obius transformations. We recall the results in Tables \ref{clasfig} and \ref{clasfig2} to be self-contained.

The classification in  \cite{HeunClass} starts with the hypergeometric equations with a free parameter
that could be pulled-back to Heun equations. To get a pulled-back equation with just 4 singular points,
some of the local exponent differences must be restricted to the value $1/k$, with $k$ a positive integer.
Since we want a free parameter, at least one 
exponent difference is left unrestricted. The quadratic transformation illustrated in (\ref{eq:second}) 
has no restrictions on the parameters of the hypergeometric equation, while other pull-back 
transformations have the following sequences of restricted exponent differences:
\begin{equation*} 
\left(\frac12\right), \; \left(\frac13\right), \; \left(\frac12,\frac13\right), \; \left(\frac12,\frac14\right), \;
\left(\frac12,\frac15\right), \; \left(\frac12,\frac16\right), \; \left(\frac13,\frac13\right), \;
\left(\frac13,\frac14\right), \; \left(\frac14,\frac14\right).
\end{equation*}
The classification in \cite{HeunClass} skips the restrictions $(1)$ and $(1/2,1/2)$ as they give
the rather degenerate cases of hypergeometric equations with cyclic or a dihedral monodromy.
There are infinitely many pull-backs to the Heun equations in this case,
as described in \cite{VidunasHDD}.

\begin{table} \small
\begin{center}
\begin{tabular}{@{}llclll@{}} 
\hline \multicolumn{2}{c}{Local exponent differences} & 
$\degr$ & Branching pattern 
& The covering, 
 & Characterization 
 \\ \cline{1-2} hypergeom.
& Heun's & 
& above singular points & its composition  & of \S \ref{heunclass} 
\\
\hline
$\led{\la,\,\lb,\,\lc}$ & $\led{\la,\,\la,\,2\lb,\,2\lc}$ & 2
& \branch{1+1}{2}{2} & \PH{32}, \nocomposition
& \PR{1}, $j=1728$ 
\\ $\led{\fr12,\,\la,\,\lb}$ & $\led{1/2,\,\la,\,2\la,\,3\lb}$ & 3
& \branch{\brep21+1}{2+1}{3} & \PH{34}, \nocomposition
& \PR{15}, $t=-3$ 
\\ 
& $\led{\la,\,\la,\,2\la,\,4\lb}$ & 4
& \branch{\brep22}{2+1+1}{4} & \PH{35},  \comp{\GT2}{2}
& \PR{3}, $j=1728$ 
\\ 
& $\led{\la,\,3\la,\,\lb,\,3\lb}$ & 
& \branch{\brep22}{3+1}{3+1 & \PH{47},  \nocomposition}
& \PR{20}, $t=-8$ 
\\ 
& $\led{2\la,\,2\la,\,2\lb,\,2\lb}$ & 
& \branch{\brep22}{2+2}{2+2} &  \PH{31}, $2\times 2$
& \PR{2}, $j=1728$ 
\\ 
$\led{1/3,\,\la,\,\lb}$ & $\led{\la,\,2\la,\,\lb,\,2\lb}$ & 3
& \branch{\brep31}{2+1}{2+1} & \PH{34},  \nocomposition
& \PR{19}, $t=-8$ 
\\ 
& $\led{\la,\,\la,\,\la,\,3\lb}$ & 
& \branch{\brep31}{3}{1+1+1} & \PH{33}, \nocomposition
& \PR{51}, $j=0$ 
\\ 
$\led{\fr12,\fr13,\la}$ & $\led{\fr12,\fr12,\fr13,4\la}$ & 4
& \branch{\brep21+1+1}{\brep31+1}{4} 
& \PH{36}, \nocomposition & \PR{47}, $t\in\QQ(\sqrt{-2})$ 
\\ 
& $\led{\fr12,\fr23,\,\la,\,4\la}$ & 5
& \branch{\brep22+1}{\brep31+2}{4+1} & \PH{29}, \nocomposition
& \PR{31}, $t=32/5$ 
\\ 
& $\led{\fr12,\fr23,\,2\la,\,3\la}$ & 
& \branch{\brep22+1}{\brep31+2}{3+2} &  \PH{30},   \nocomposition
& \PR{25}, $t=-4$ 
\\ 
&  $\led{\fr12,\fr13,\fr13,5\la}$ & 
& \branch{\brep22+1}{\brep31+1+1}{5} &  \PH{37}, \nocomposition
& \PR{59}, $t\in\QQ(\!\sqrt{-15})$ 
\\ 
& $\led{\fr12,\fr12,\,\la,\,5\la}$ & 6
& \branch{\brep22+1+1}{\brep32}{5+1} & \PH{26}, \nocomposition
& \PR{45}, $t\in\QQ(i)$ 
\\ 
&  $\led{\fr12,\fr12,\,2\la,\,4\la}$ & 
& \branch{\brep22+1+1}{\brep32}{4+2} &  \PH{27},  \comp{\GT3}{2}
& \PR{4}, $j=1728$ 
\\ 
&  $\led{\fr12,\fr12,\,3\la,\,3\la}$ & 
& \branch{\brep22+1+1}{\brep32}{3+3} & \PH{28}, \comp{\CT3}{\HT2}
& \PR{38}, $t\in\QQ(\sqrt{3})$ 
\\ 
& $\led{\fr13,\fr23,\,\la,\,5\la}$ & 
& \branch{\brep23}{\brep31+2+1}{5+1} & \PH{24},   \nocomposition
& \PR{26}, $t=25/9$ 
\\ 
& $\led{\fr13,\fr23,\,2\la,\,4\la}$ & 
& \branch{\brep23}{\brep31+2+1}{4+2} & \PH{25},   \comp{\GT2}{3}
& \PR{21}, $t=-8$ 
\\ 
& $\led{\fr13,\fr13,\fr13,6\la}$ & 
& \branch{\brep23}{\brep31+1+1+1}{6} &  \PH{38}, \comp{\GT2}{\CT3}
& \PR{52}, $j=0$ 
\\ 
& $\led{\fr12,\fr13,\,\la,\,6\la}$ & 7
& \branch{\brep23+1}{\brep32+1}{6+1} & \PH{21}, \nocomposition
& \PR{61}, $j\in\QQ(\omega)$ 
\\ 
& $\led{\fr12,\fr13,\,2\la,\,5\la}$ & 
& \branch{\brep23+1}{\brep32+1}{5+2} & \PH{22},   \nocomposition
& \PR{36}, $t=189/64$ 
\\ 
& $\led{\fr12,\fr13,\,3\la,\,4\la}$ & 
& \branch{\brep23+1}{\brep32+1}{4+3} & \PH{23},   \nocomposition
& \PR{30}, $t=-27$ 
\\ 
& $\led{\fr23,\,\la,\,\la,\,6\la}$ & 8 &
\branch{\brep24}{\brep32+2}{6+1+1} & \PH{15},   \comp{\GT4}{2}
& \PR{7}, $j=1728$ 
\\ 
& $\led{\fr23,\,\la,\,2\la,\,5\la}$ & 
&  \branch{\brep24}{\brep32+2}{5+2+1} &\PH{16},   \nocomposition
& \PR{29}, $t=27/2$ 
\\ 
& $\led{\fr23,\,2\la,\,3\la,\,3\la}$ & 
& \branch{\brep24}{\brep32+2}{3+3+2} & \PH{17}, \comp{\GT4}{2}
& \PR{8}, $j=1728$ 
\\ 
& $\led{\fr13,\fr13,\,\la,\,7\la}$ & 
& \branch{\brep24}{\brep32+1+1}{7+1} & \PH{18}, \nocomposition
& \PR{56}, $t\in\QQ(\omega)$ 
\\ 
& $\led{\fr13,\fr13,\,2\la,\,6\la}$ & 
& \branch{\brep24}{\brep32+1+1}{6+2} & \PH{19}, \comp{\GT2}{\BT4}, \comp{\GT4}{2}
& \PR{6}, $j=1728$ 
\\ 
& $\led{\fr13,\fr13,\,4\la,\,4\la}$ & 
& \branch{\brep24}{\brep32+1+1}{4+4} & \PH{20}, \comp{\GT2}{4}, \comp{\AT4}{\HT2}
& \PR{41}, $t\in\QQ(\sqrt{3})$ 
\\ 
& $\led{\fr12,\,\la,\,\la,\,7\la}$ & 9
& \branch{\brep24+1}{\brep33}{7+1+1} & \PH{11}, \nocomposition
& \PR{57}, $t\in\QQ(\sqrt{-7})$ 
\\ 
& $\led{\fr12,\,\la,\,2\la,\,6\la}$ & 
& \branch{\brep24+1}{\brep33}{6+2+1} & \PH{12},     \comp{\GT3}{3}
& \PR{17}, $t=-3$ 
\\ 
& $\led{\fr12,\,\la,\,3\la,\,5\la}$ & 
& \branch{\brep24+1}{\brep33}{5+3+1} & \PH{13}, \nocomposition
& \PR{33} $t=128/3$ 
\\ 
& $\led{\fr12,\,2\la,\,3\la,\,4\la}$ & 
& \branch{\brep24+1}{\brep33}{4+3+2} & \PH{14},   \comp{\GT3}{3}
& \PR{18}, $t=-3$ 
\\ 
& $\led{\fr13,\,\la,\,\la,\,8\la}$ & 10 &
\branch{\brep25}{\brep33+1}{8+1+1} & \PH{7}, \nocomposition
& \PR{49}, $t\in\QQ(\sqrt{-2})$ 
\\ 
& $\led{\fr13,\,\la,\,2\la,\,7\la}$ & 
& \branch{\brep25}{\brep33+1}{7+2+1} & \PH{8},   \nocomposition
& \PR{35}, $t=81/32$ 
\\ 
& $\led{\fr13,\,\la,\,4\la,\,5\la}$ & 
& \branch{\brep25}{\brep33+1}{5+4+1} & \PH{9},   \nocomposition
& \PR{28}, $t=-80$ 
\\ 
& $\led{\fr13,\,2\la,\,3\la,\,5\la}$ & 
& \branch{\brep25}{\brep33+1}{5+3+2} & \PH{10},    \nocomposition
& \PR{32}, $t=32/5$ 
\\ 
& $\led{\la,\,\la,\,\la,\,9\la}$ & 12 &
\branch{\brep26}{\brep34}{9+1+1+1} & \PH{1}, \comp{\GT4}{\CT3}
& \PR{55}, $j=0$ 
\\ 
& $\led{\la,\,\la,\,2\la,\,8\la}$ & 
& \branch{\brep26}{\brep34}{8+2+1+1} & \PH{2}, \compp{\GT3}{\GT2}{2}
& \PR{14}, $j=1728$ 
\\ 
& $\led{\la,\,2\la,\,3\la,\,6\la}$ & 
& \branch{\brep26}{\brep34}{6+3+2+1} & \PH{3}, \comp{\GT4}{3}, \comp{\GT3}{4}
& \PR{24}, $t=-8$ 
\\ 
& $\led{\la,\,\la,\,5\la,\,5\la}$ & 
& \branch{\brep26}{\brep34}{5+5+1+1} & \PH{4}, \comp{6}{\HT2}
& \PR{43}, $t\in\QQ(\sqrt{5})$ 
\\ 
& $\led{2\la,\,2\la,\,4\la,\,4\la}$ & 
& \branch{\brep26}{\brep34}{4+4+2+2} & \PH{5}, \compp{\GT2}{\GT{\CT3\!}}{2}, \comp{\GT3}{\DDT}
& \PR{12}, $j=1728$ 
\\ 
& $\led{3\la,\,3\la,\,3\la,\,3\la}$ & 
& \branch{\brep26}{\brep34}{3+3+3+3} & \PH{6}, \comp{\GT4}{\CT3\!}, \compp{\CT3}{\HT2\!}{\HT2\!}
& \PR{53}, $j=0$ 
\\ \hline
\end{tabular}
\caption{Gauss-to-Heun  transformations with two continuous
parameters, or from $\hpgde{\fr12,\fr13,\,\la}$.} \label{clasfig}
\end{center}
\end{table}

\begin{table} \small
\begin{center}
\begin{tabular}{@{}llclll@{}} 
\hline \multicolumn{2}{c}{Local exponent differences} & 
$\degr$ & Branching pattern 
& The covering,
& Characterization  
\\ \cline{1-2}
hypergeom. & Heun's & 
& above singular points & its composition & of \S \ref{heunclass}
\\  \hline
$\led{\fr12,\fr14,\,\la}$ & $\led{\fr12,\fr12,\,\la,\,3\la}$ & 4
& \branch{\brep21+1+1}{\brep41}{3+1} & \PH{36}, \nocomposition
& \PR{48}, $t\in\QQ(\sqrt{-2})$ 
\\ 
& $\led{\fr12,\fr12,\,2\la,\,2\la}$ & 
& \branch{\brep21+1+1}{\brep41}{2+2} 
& \PH{35}, \comp{\GT2}{\HT2} & \PR{37}, $t\in\QQ(\sqrt{2})$
\\ 
& $\led{\fr12,\fr14,\,\la,\,4\la}$ & 5
& \branch{\brep22+1}{\brep41+1}{4+1} & \PH{44}, \nocomposition
& \PR{60}, $j\in\QQ(i)$ 
\\ 
& $\led{\fr12,\fr14,\,2\la,\,3\la}$ & 
& \branch{\brep22+1}{\brep41+1}{3+2} & \PH{29},  \nocomposition
& \PR{27}, $t=-80$ 
\\ 
& $\led{\fr12,\,\la,\,2\la,\,3\la}$ & 6
& \branch{\brep23}{\brep41+2}{3+2+1} & \PH{25}, \comp{\GT2}{3}
& \PR{16}, $t=-3$ 
\\ 
& $\led{\fr14,\fr14,\,\la,\,5\la}$ & 
& \branch{\brep23}{\brep41+1+1}{5+1} & \PH{42}, \nocomposition
& \PR{44}, $t\in\QQ(i)$ 
\\ 
& $\led{\fr14,\fr14,\,3\la,\,3\la}$ & 
& \branch{\brep23}{\brep41+1+1}{3+3} & \PH{43},   \comp{3}{\HT2}
& \PR{39}, $t\in\QQ(\sqrt{3})$ 
\\ 
& $\led{\la,\,\la,\,2\la,\,4\la}$ & 8 &
\branch{\brep24}{\brep42}{4+2+1+1} & \PH{40}, \compp{\GT2}{\GT2}{2}
& \PR{13}, $j=1728$  
\\ 
& $\led{\la,\,\la,\,3\la,\,3\la}$ & 
& \branch{\brep24}{\brep42}{3+3+1+1} & \PH{20},   \comp{\GT2}{4}, \comp{\AT4}{\HT2}
& \PR{23}, $t=-8$ 
\\ 
& $\led{2\la,\,2\la,\,2\la,\,2\la}$ & 
& \branch{\brep24}{\brep42}{2+2+2+2} & \PH{41}, $2\!\times\!2\!\times\!2$
& \PR{10}, $j=1728$ 
\\ 
$\led{\fr12,\fr15,\,\la}$ & $\led{\fr12,\,\la,\,\la,\,3\la}$ & 5
& \branch{\brep22+1}{\brep51}{3+1+1} & \PH{37}, \nocomposition
& \PR{58}, $t\in\QQ(\!\sqrt{-15})$ 
\\ 
& $\led{\fr12,\,\la,\,2\la,\,2\la}$ & 
& \branch{\brep22+1}{\brep51}{2+2+1} & \PH{45},   \nocomposition
&  \PR{42}, $t\in\QQ(\sqrt{5})$ 
\\ 
& $\led{\fr15,\,\la,\,\la,\,4\la}$ & 6
& \branch{\brep23}{\brep51+1}{4+1+1} & \PH{42}, \nocomposition
& \PR{46}, $t\in\QQ(i)$ 
\\ 
& $\led{\fr15,\,\la,\,2\la,\,3\la}$ & 
& \branch{\brep23}{\brep51+1}{3+2+1} & \PH{24},    \nocomposition
& \PR{34}, $t=128/3$ 
\\ 
$\led{\fr12,\fr16,\,\la}$ & $\led{\la,\,\la,\,\la,\,3\la}$ & 6
& \branch{\brep23}{\brep61}{3+1+1+1} & \PH{38}, \comp{\GT2}{\CT3}
& \PR{54}, $j=0$ 
\\ 
& $\led{\la,\,\la,\,2\la,\,2\la}$ & 
& \branch{\brep23}{\brep61}{2+2+1+1} &  \PH{39}, \comp{\GT2}{3}, \comp{3}{\HT2}
& \PR{22}, $t=-8$ 
\\  
$\led{\fr13,\fr13,\,\la}$ & $\led{\fr13,\fr13,\,\la,\,3\la}$& 4
& \branch{\brep31+1}{\brep31+1}{3+1} &  \PH{46},   \nocomposition
& \PR{5}, $j=1728$ 
\\ 
& $\led{\fr13,\fr13,\,2\la,\,2\la}$ & 
& \branch{\brep31+1}{\brep31+1}{2+2} &  \PH{47},  \nocomposition
& \PR{40}, $t\in\QQ(\sqrt{3})$ 
\\ 
& $\led{\la,\,\la,\,2\la,\,2\la}$ & 6
& \branch{\brep32}{\brep32}{2+2+1+1} &\PH{28}, \comp{\GT{\CT3}}{2}
& \PR{11}, $j=1728$ 
\\ $\led{\fr13,\fr14,\,\la}$     
& $\led{\fr13,\,\la,\,\la,\,2\la}$ & 4
& \branch{\brep31+1}{\brep41}{2+1+1} & \PH{36}, \nocomposition
& \PR{50}, $t\in\QQ(\sqrt{-2})$ 
\\ $\led{\fr14,\fr14,\,\la}$     
& $\led{\la,\,\la,\,\la,\,\la}$ & 4
& \branch{\brep41}{\brep41}{1+1+1+1} & \PH{48}, \comp{2}{\HT2}
& \PR{9}, $j=1728$ 
\\ \hline
\end{tabular}
\caption{Other hypergeometric-to-Heun transformations.}
\label{clasfig2} 
\end{center}
\end{table}

Tables \ref{clasfig} and \ref{clasfig2} are renditions of \cite[Tables 1, 2, 3]{HeunClass}.
The first two columns give the 
exponent differences (up to the sign) of the hypergeometric and Heun equation 
under a pull-back transformation. Let $E$ be the hypergeometric equation. 
The third column gives the degree $D$ of the transformation.
The fourth column gives the branching pattern of the pull-back covering.
The branching pattern is given by 3 partitions of $D$ separated by the equality sign.
The partitions specify the branching orders of the covering in the 3 fibers above
the singular points of $E$. 
The notation \brep{k}{n} means the sum of $n$ repeated $k$'s in a partition.
It represents $n$ points with the branching order $k$ above a singularity of $E$ with the 
exponent difference restricted to $1/k$; those $n$ points would be non-singular
with an appropriate gauge prefactor $\theta(x)$ in (\ref{eq:algtransf}).
The number of bracketed branching orders is equal to the number of restricted 
exponent differences. The number of non-bracketed branching orders is equal to 4; 
they represent the 4 singular points
of the pulled-back Heun equation. The total number of points in the three fibers
is  equal to $\degr+2$, as required for the Belyi coverings $\PP^1\to\PP^1$ by the Hurwitz formula;
see \cite[Lemma 3.2]{HeunClass}.

The fifth column identifies the pull-back coverings. The $H_k$ notation refers to the list of
48 Belyi coverings in \cite[Table 4]{HeunClass} not normalized yet by a M\"obius transformation.
Most of the coverings can be found in the explicit formulas of \S \ref{sec:hpghe} here,
as arguments of the hypergeometric functions.
The coverings \PH{1} to \PH{38} appear in Herfurtner's list \cite{Herfurtner91}
of elliptic surfaces over $\PP^1$ with 4 singular fibers; the ${\cal J}(X,Y)$-expressions
in \cite[Table 3]{Herfurtner91} are projectivized Belyi coverings and give
the $j$-invariants of the elliptic surfaces up to the multiple 1728.
The numbering \PH{1} to \PH{38} agrees with \cite[Table 1]{Reiter},
where Movasati and Reiter observe that 38 of Herfurtner's 50 cases of elliptic surfaces
give rise to pull-backs 
from $\hpgde{1/2,1/3,\la}$ to Heun equations. In Table \ref{clasfig} here, 
the coverings \PH{1} to \PH{30} and \PH{36}, \PH{37}, \PH{38}
appear in the pull-backs 
specifically from $\hpgde{1/2,1/3,\la}$, while the coverings \PH{31} to \PH{35} 
appear in pull-back transformations 
with 2 or  3 parameters. The coverings \PH{39} to \PH{48} appear in pull-backs 
to Heun functions from the hypergeometric equations
different from $\hpgde{1/2,1/3,\la}$. The coverings \PH{20}, \PH{24}, \PH{25}, \PH{28}, \PH{29},
\PH{34}, \PH{35}, \PH{37}, \PH{38}, \PH{42}, \PH{47} appear twice in Tables \ref{clasfig} and
\ref{clasfig2}, while \PH{36} three times, as their branching patterns can be parsed for the
Heun-to-Gauss reductions in multiple ways.

The fifth column also tells which coverings are compositions
of lower degree coverings, and indicates the compositions by the component degrees.
The notation reveals a few more specifics about the compositions;
see the beginning of Appendix B for details.

The last column of Tables \ref{clasfig} and \ref{clasfig2} exhibits the P-numbers of the
Heun-to-Gauss reductions assigned by a new perspective, described in the next section.
Relatedly, the last column adds minimal information about the $t$-values of the pulled-back
Heun equations. The $t$-values are cross-ratios of the 4 singular points of Heun's equations
(or of pulled-back Fuchsian equations with 4 singularities, even if the location of 3
singularities is not normalized to $x=0$, $x=1$, $x=\infty$). A permutation of the 4 singular points
generally produces an orbit of six $t$-values:
\begin{equation} \label{eq:allts}
t, \; 1-t, \; \frac1t, \; \frac1{1-t}, \; \frac{t}{t-1}, \; 1-\frac1t.
\end{equation}
As it is well-known, the set of six values can be represented by one number, the $j$-invariant:
\begin{equation} \label{eq:invj0}
j(t)=\frac{256\,(t^2-t+1)^3}{t^2\,(t-1)^2}.
\end{equation}
The $\cal J$-invariant used in \cite{Herfurtner91} is the Belyi map ${\cal J}(t)=j(t)/1728$.
Its version appears in hypergeometric transformation (\ref{eq:trd6}) below.
The last column of Tables \ref{clasfig}, \ref{clasfig2}  additionally indicates:
\begin{itemize}
\item the most frequent $j$-values 1728 and 0, if $t\in\{-1,2,1/2,-\omega,1+\omega\}$;
\item or a representative $t$-value, if it is in $\QQ$ and $j(t)\neq 1728$.
\item or the number field for the $t$-values, if $t\not\in\QQ$ and $j(t)\in\QQ\setminus\{0\}$;
\item or the number field for the $j$-value, if $j(t)\not\in\QQ$.
\end{itemize}
The last case appears only twice, with the coverings \PH{21} and \PH{44}. These two coverings
are not defined over $\QQ$ either, but over $\QQ(\omega)$ and $\QQ(i)$, respectively.
Technically speaking, the notations \PH{21} and \PH{44} represent pairs of coverings
related by the complex conjugation. Therefore the strict count of involved Belyi coverings
(or of their {\em dessin d'enfant}) is 50 rather than 48.
For some technical purposes, the corresponding Heun-to-Gauss
reductions P61 and P60 can be counted as pairs of different transformations as well.

\subsection{Classification by Heun equations}
\label{heunclass}

The main application of the list of possible Heun-to-Gauss reductions is,
of course, finding out whether an encountered Heun function or equation is reducible to
a hypergeometric one. Tables \ref{clasfig}, \ref{clasfig2} are not convenient for looking up
a match with the parameters of an encountered Heun equation, as even the tuples of four 
exponent differences are listed disorderly. Modifications by the fractional-linear transformations
of  Heun equations should also be recognized, hence additional (to $j$) invariants
of the fractional-linear action are helpful. The fractional-linear transformations of both
hypergeometric and Heun functions are recalled in Appendix B. The additional invariants
are derived and listed in Appendix C.

Of all Heun's parameters $t,q,a,b,c,d$, the most characteristic one is $t$, which is the location of the fourth
singularity. Therefore $t$ or its $j$-invariant are the most sensible main criteria
for sorting Heun-to-Gauss reductions. To formulate the full ordering uniquely,
we adopt the criteria in \cite{VidunasHoeij} for sorting a more complicated set of
non-parametric Heun-to-Gauss reductions. Thereby the presentation of results
here and in \cite{VidunasHoeij} are consistent with each other.  Here is the simplified set
(up to accounting for free parameters) of the sorting criteria in \cite{VidunasHoeij}.
They are enough to determine a unique ordering on the set of 61 transformations,
and the resulting list is usable for matching and reducing an encountered Heun's equation.

The settled sorting criteria are the following:
\begin{itemize}
\item[\refpart{a}] the first criterium is the $j$-invariant;
\item[\refpart{b}] the second criterium is the local exponent differences of the Heun equation;
\item[\refpart{c}] the last criterium is the degree of the covering.
\end{itemize}
In a similar hierarchical manner, the $j$-invariants are sorted by the following criteria:
\begin{itemize}
\item[\refpart{a1}] the number field where the $j$-invariant is defined;
\item[\refpart{a2}] the number field where the $t$-values are defined;
\item[\refpart{a3}] the leading coefficient of the minimal polynomial in $\ZZ[x]$
for the $j$-invariant. 
\end{itemize}
Note that for $j\in\QQ$ the number in \refpart{a3} is the denominator of $j$.
The number fields (either for the $j$-invariant or the $t$-values) are ordered by the following  criteria:
\begin{itemize}
\item[\refpart{f1}] the field degree, hence $\QQ$ precedes quadratic extensions;
\item[\refpart{f2}] quadratic extensions $\QQ(\sqrt{a})$ are ordered as follows:
\begin{itemize}
\item[\refpart{f1a}] real quadratic fields (with $a>0$) precede imaginary quadratic fields (with $a<0$);
\item[\refpart{f1b}] the fields with the same sign of $a$ are ordered
by the increasing $|a|$.
\end{itemize}
\end{itemize}
The positive integers in \refpart{a3} 
are ordered as follows:
\begin{itemize}
\item[\refpart{i1}] the product of the primes dividing the integer;
\item[\refpart{i2}] by the increasing value.
\end{itemize}
Except for the \refpart{i1}-part and for using the absolute value in \refpart{f1b},
all other numeric specifics are ordered in the increasing order.
The sets of local  exponent differences are ordered as follows:
\begin{itemize}
\item[\refpart{b1}] in each tuple the four 
exponent differences are ordered
by {\em putting the free parameters at the end}, and 
the numeric values of the (positive, rational) restricted 
exponent differences are ordered firstly their denominators,
then secondly by the numerators.
\item[\refpart{b2}] the tuples are {\em first compared by the number of restricted 
exponent differences (hence the tuples with more free parameters have precedence)}, then lexicographically, from their first elements, and the elements are matched first by their denominators then by the numerators.
\end{itemize}
These criteria break all ties in the list of 61 transformations, as mentioned.
In particular, no sorting criteria is necessary for the accessory parameters $q$
(or their invariants). The highlighted text in \refpart{b1}--\refpart{b2} accounts for the
presence of free parameters, absent in the criteria for the non-parametric list in \cite{VidunasHoeij}.

\begin{table} \small 
\begin{tabular}{llllcll}  
\hline Id & $j$-invariant
& Exponent &   \multicolumn2l{Covering} & $\hpgo21$ equation & Other trans- \\
\cline{4-5} & & differences & Id & $\degr$ & & formations \\
\hline
\PR{1} & $2^63^3=1728$ & $\led{\la,\,\la,\,\lb,\,\lc}$ & \PH{32} & 2 & $\led{\la,\,\lb/2,\,\lc/2}$ & --- \\
\PR{2} & & $\led{\la,\,\la,\,\lb,\,\lb}$ & \PH{31} & 4 & $\led{1/2,\la/2,\lb/2}$ & \PR{1}, $2_H$ \\
\PR{3} & & $\led{\la,\,\la,\,2\la,\,\lb}$ & \PH{35} & 4 & $\led{1/2,\,\la,\,\lb/4}$ & \PR{1}  \\
\PR{4} & & $\led{1/2,1/2,\la,2\la}$ & \PH{27} & 6 & $\led{1/2,1/3,\la/2}$ & \PR{1}; $2^H$ \\
\PR{5} & & $\led{1/3,1/3,\la,3\la}$ & \PH{46} & 4 & $\led{1/3,1/3,\la}$ & ---; \PR{1}, \PR{6} \\
\PR{6} & & & \PH{19} & 8 & $\led{1/2,1/3,\la/2}$ & \PR{1}, \PR{5} \\
\PR{7} & & $\led{2/3,\la,\la,6\la}$  & \PH{15} & 8 & $\led{1/2,1/3,\la}$ & \PR{1} \\
\PR{8} & & $\led{2/3,2\la,3\la,3\la}$  & \PH{17} & 8 & $\led{1/2,1/3,\la}$ & \PR{1} \\
\PR{9} & & $\led{\la,\,\la,\,\la,\,\la}$  & \PH{48} & 4 & $\led{1/4,1/4,\la}$ &
$2_H$; \PR{1}, \PR{2}, \PR{10}, $4_H$ \\
\PR{10} & & & \PH{41}  & 8 & $\led{1/2,1/4,\la/2}$ & \PR{1}, \PR{2}, \PR{9}, $2_H$, $4_H$ \\
\PR{11} & & $\led{\la,\,\la,\,2\la,\,2\la}$ & \PH{28} & 6 & $\led{1/3,1/3,\la}$ &
\PR{1}; \PR{2}, \PR{3}, \PR{12}, $2_H$ \\
\PR{12} & & & \PH{5} & 12 & $\led{1/2,1/3,\la/2}$ & \PR{1}, \PR{2}, \PR{3}, \PR{11}, $2_H$ \\
\PR{13} & & $\led{\la,\,\la,\,2\la,\,4\la}$ & \PH{40} & 8 & $\led{1/2,1/4,\la}$ & \PR{1}, \PR{3} \\
\PR{14} & & $\led{\la,\,\la,\,2\la,\,8\la}$ & \PH{2} & 12 & $\led{1/2,1/3,\la}$ & \PR{1}, \PR{3} \\
\PR{15} & $2^413^3/3^2$ & $\led{1/2,\la,2\la,\lb}$ & \PH{34} & 3 & $\led{1/2,\,\la,\,\lb/3}$ & ---  \\
\PR{16} & & $\led{1/2,\la,2\la,3\la}$ & \PH{25} & 6 & $\led{1/2,1/4,\la}$ & \PR{15} \\ %
\PR{17} & & $\led{1/2,\la,2\la,6\la}$ & \PH{12} & 9 & $\led{1/2,1/3,\la}$ & \PR{15} \\
\PR{18} & & $\led{1/2,2\la,3\la,4\la}$ & \PH{14} & 9 & $\led{1/2,1/3,\la}$ & \PR{15} \\
\PR{19} & $2^273^3/3^4$ & $\led{\la,\,2\la,\,\lb,\,2\lb}$ & \PH{34} & $3$ & $\led{1/3,\,\la,\,\lb}$ & --- \\ %
\PR{20} & & $\led{\la,\,3\la,\,\lb,\,3\lb}$ & \PH{47} & 4 & $\led{1/2,\,\la,\,\lb}$ & --- \\ %
\PR{21} & & $\led{1/3,2/3,\la,2\la}$ & \PH{25} & 6 & $\led{1/2,1/3,\la/2}$ & \PR{19} \\
\PR{22} & & $\led{\la,\,\la,\,2\la,\,2\la}$ & \PH{39} & 6 & $\led{1/2,1/6,\la}$ & \PR{19}, $2_H$ \\ %
\PR{23} & & $\led{\la,\,\la,\,3\la,\,3\la} $  & \PH{20} & 8 & $\led{1/2,1/4,\la}$ & \PR{20}, $2_H$ \\
\PR{24} & & $\led{\la,\,2\la,\,3\la,\,6\la} $ & \PH{3} & 12 & $\led{1/2,1/3,\la}$ & \PR{19}, \PR{20} \\
\hline
\end{tabular}  \centering
\caption{Parametric Heun-to-Gauss reductions equations with a $t$ value in $\{-1,-3,-8\}$
 up to fractional-linear transformations.} \label{heunred0}
\end{table}

\begin{table} \small 
\begin{tabular}{llllcll} 
\hline Id & $j$-invariant
& Exponent &   \multicolumn2l{Covering} & $\hpgo21$ equation & Other trans- \\
\cline{4-5 }& \hfill \scriptsize (number field for $t$) & differences & Id & $\degr$ & & formations \\
\hline
\PR{25} & $2^43^37^3/5^2$ & $\led{1/2,2/3,2\la,3\la}$ & \PH{30} & 5 & $\led{1/2,1/3,\la}$ & --- \\
\PR{26} & $13^337^3/3^45^4$ & $\led{1/3,2/3,\la,5\la}$ & \PH{24} & 6 & $\led{1/2,1/3,\la}$ & --- \\
\PR{27} & $6481^3/3^85^2$ & $\led{1/2,1/4,2\la,3\la}$ & \PH{29} & 5 & $\led{1/2,1/4,\la}$ & --- \\
\PR{28} & & $\led{1/3,\la,4\la,5\la}$ & \PH{9} & 10 & $\led{1/2,1/3,\la}$ & --- \\
\PR{29} & $2^67^397^3\!/3^65^4\!$ & $\led{2/3,\la,2\la,5\la}$ & \PH{16} & 8 & $\led{1/2,1/3,\la}$ & --- \\
\PR{30} & $2^4757^3\!/3^67^2$ & $\led{1/2,1/3,3\la,4\la}$ & \PH{23} & 7 & $\led{1/2,1/3,\la}$ & --- \\
\PR{31} & $7^3127^3\!/2^23^65^2$ & $\led{1/2,2/3,\la,4\la}$ & \PH{29} & 5 & $\led{1/2,1/3,\la}$ & --- \\
\PR{32} & & $\led{1/3,2\la,3\la,5\la}$ & \PH{10} & 10 & $\led{1/2,1/3,\la}$ & --- \\
\PR{33} & $7^32287^3\!/2^63^25^6$ & $\led{1/2,\la,3\la,5\la}$ & \PH{13} & 9 & $\led{1/2,1/3,\la}$ & --- \\
\PR{34} & & $\led{1/5,\la,2\la,3\la}$ & \PH{24} & 6 & $\led{1/2,1/5,\la}$ & --- \\
\PR{35} & $4993^3/2^23^87^4$ & $\led{1/3,\la,2\la,7\la}$ & \PH{8} & 10 & $\led{1/2,1/3,\la}$ & --- \\
\PR{36} & $19^31459^3\!/2^43^65^67^2\!$ & $\led{1/2,1/3,2\la,5\la}$ & \PH{22} & 7 &
$\led{1/2,1/3,\la}$ & ---  \\
\PR{37} & $2^33^311^3$ \hfill \scriptsize ($\sqrt{2}$) & $\led{1/2,1/2,\la,\la}$ & \PH{35}
 & 4 & $\led{1/2,1/4,\la/2}$ & $2_H$; $2^H$ \\
\PR{38} & $2^43^35^3$ \hfill \scriptsize ($\sqrt{3}$) & $\led{1/2,1/2,\la,\la}$ & \PH{28}
 & 6 & $\led{1/2,1/3,\la/3}$ & $2_H$; $2^H$ \\
\PR{39} & $2^2193^3\!/3$ \hfill \scriptsize ($\sqrt{3}$) & $\led{1/4,1/4,\la,\la}$ & \PH{43}
 & 6 & $\led{1/2,1/4,\la/3}$ & $2_H$ \\
\PR{40} & $2^753^3\!/3^3$ \hfill \scriptsize ($\sqrt{3}$) & $\led{1/3,1/3,\la,\la}$ & \PH{47}
 & 4 & $\led{1/3,1/3,\la/2}$ &  ---; \PR{41}, $2_H$ \\
\PR{41} & & & \PH{20} & 8 & $\led{1/2,1/3,\la/4}$ & \PR{47}, $2_H$; \PR{40} \\
\PR{42} & $2^417^3$ \hfill \scriptsize ($\sqrt{5}$) & $\led{1/2,\la,2\la,2\la}$ & \PH{45}
 & 5 & $\led{1/2,1/5,\la}$ & --- \\
\PR{43} & $2^{14}31^3\!/5^3$ \hfill \scriptsize ($\sqrt{5}$) & $\led{\la,\,\la,\,5\la,\,5\la}$ & \PH{4}
 & 12 & $\led{1/2,1/3,\la}$ & $2_H$ \\
\PR{44} & $2^23^313^3\!/5^4$ \hfill \scriptsize ($\sqrt{-1}$) & $\led{1/4,1/4,\la,5\la}$ & \PH{42}
 & 6 & $\led{1/2,1/4,\la}$ & --- \\
\PR{45} &$-2^4109^3\!/5^6$\hfill \scriptsize ($\sqrt{-1}$)  & $\led{1/2,1/2,\la,5\la}$ & \PH{26}
 & 6 & $\led{1/2,1/3,\la}$ & ---; $2^H$ \\
\PR{46} & & $\led{1/5,\la,\la,4\la}$ & \PH{42} & 6 & $\led{1/2,1/5,\la}$ & --- \\
\PR{47} & $-2^519^3\!/3^6$ \hfill \scriptsize ($\sqrt{-2}$) & $\led{1/2,1/2,1/3,\la}$ & \PH{36}
 & 4 & $\led{1/2,1/3,\la/4}$ & ---; $2^H$ \\
\PR{48} & $2\!\cdot\! 47^3\!/3^8$ \hfill \scriptsize ($\sqrt{-2}$) & $\led{1/2,1/2,\la,3\la}$ & \PH{36}
 & 4 & $\led{1/2,1/4,\la}$ & ---;  $2^H$ \\
\PR{49} & & $\led{1/3,\la,\la,8\la}$ & \PH{7} & 10 & $\led{1/2,1/3,\la}$ & --- \\
\PR{50} & $-2^6239^3\!/3^{10}\!$ \hfill \scriptsize ($\sqrt{-2}$) & $\led{1/3,\la,\la,2\la}$ & \PH{36}
 & 4 & $\led{1/3,1/4,\la}$ & --- \\
\PR{51} & 0 \hfill \scriptsize ($\sqrt{-3}$) & $\led{\la,\,\la,\,\la,\,\lb}$ & \PH{33}
 & 3 & $\led{1/3,\,\la,\,\lb/3}$ & ---  \\
\PR{52} & & $\led{1/3,1/3,1/3,\la}$ & \PH{38} & 6 & $\led{1/2,1/3,\la/6}$ & \PR{51} \\
\PR{53} & & $\led{\la,\,\la,\,\la,\,\la}$ & \PH{6} & 12 & $\led{1/2,1/3,\la/3}$ & \PR{51} \\
\PR{54} & & $\led{\la,\,\la,\,\la,\,3\la}$ & \PH{38} & 6 & $\led{1/2,1/6,\la}$ & \PR{51} \\
\PR{55} & & $\led{\la,\,\la,\,\la,\,9\la}$ & \PH{1} & 12 & $\led{1/2,1/3,\la}$ & \PR{51} \\
\PR{56} & $\!-2^{11}11^3\!/3^37^4\;$ \hfill \scriptsize ($\sqrt{-3}$) & $\led{1/3,1/3,\la,7\la}$ & \PH{18}
& 8 & $\led{1/2,1/3,\la}$ & --- \\
\PR{57} & $5^343^3\!/2^67^3$ \hfill \scriptsize ($\sqrt{-7}$) & $\led{1/2,\la,\la,7\la}$ & \PH{11}
& 9 & $\led{1/2,1/3,\la}$ & --- \\
\PR{58} & $\!-269^3\!/2^{10}3^5$ \hfill \scriptsize ($\sqrt{-15}$) & $\led{1/2,\la,\la,3\la}$ & \PH{37}
& 5 & $\led{1/2,1/5,\la}$ & --- \\
\PR{59} & $71^3\!/2^43^{3}5$ \hfill \scriptsize ($\sqrt{-15}$) & $\led{1/2,1/3,1/3,\la}$ & \PH{37}
& 5 & $\led{1/2,1/3,\la/5}$ & --- \\
\PR{60} & $\frac{(1+i)^{12}(3-2i)^3}{(2-i)^2}$ & $\led{1/2,1/4,\la,4\la}$ & \PH{44}
& 5 & $\led{1/2,1/4,\la}$ & --- \\
\PR{61} & $\!-\omega2^4\frac{(1-2\omega)^3(7+6\omega)^3}{(1+2\omega)^{6}(3+2\omega)^{2}}\!$ & $\led{1/2,1/3,\la,6\la}$ & \PH{21} & 7 & $\led{1/2,1/3,\la}$ & --- \\ \hline
\end{tabular}  \centering
\caption{Other parametric Heun-to-hypergeometric reductions.}  \label{heunred}
\end{table}

The resulting ordering is displayed in Tables \ref{heunred0} and \ref{heunred}.
It starts with a list of 14 transformations to Heun equations with $t\in\{-1,2,1/2\}$.
A detailed identification shows that the Heun equations for these transformations are 
the same as for the well-known
quadratic transformation  $\hpgde{\la,\lb,\lc}\stackrel{2}{\longleftarrow}\heunde{\la,\,\la,2\lb,\,2\lc}$ up to the parameter identification and the fractional-linear symmetries of Appendix A.
We mark this quadratic transformation by P1.

The last column of Tables \ref{heunred0} and \ref{heunred}
displays other considered transformations possible for the same Heun equation as for the currently
numbered one. A semicolon there separates the possible transformations
that are composition factors of the currently numbered, from the other possible transformations
(after a semicolon, if present). Accordingly, P1 is listed in the last column for the P2--P14 entries,
but it is not a composition factor for P5 and P9. The notation $2_H$, $2^H$, $4_H$ refers to
Heun-to-Heun transformations. It is explained in Appendix B, and the transformations are
considered in \S \ref{Heun2Heun}.

The other cases of different transformations with (generically) the same Heun equation
are within the sequences P15--P18, P19--P24, P40--P41 and P51--P55. In particular,
the reducible Heun equations with $j(t)=0$ can be obtained by the cubic transformation P51.
Likewise, all transformations to Heun equations with $t\in\{-3,4,-1/3,4/3,1/4,3/4\}$
are specializations of the two-parametric P15.
However, there are two unrelated transformations P19, P20 giving Heun equations
with $t\in\{-8,9,-1/8,9/8,1/9,8/9\}$.

A more detailed identification of transformed Heun equations is possible  by considering
the formulas of \S \ref{sec:hpghe}, or additional invariants of the fractional-linear transformations
described in Appendix C. The following theorem can be considered as the main result of this paper.

\begin{theorem}\label{th:t}
Suppose that Heun's equation $(\ref{Heun})$ is (a specialization of a) parametric
pull-back transformation of a hypergeometric equation, and the monodromy group
of the hypergeometric equation is not cyclic or dihedral.
Then the $j$-invariant $(\ref{eq:invj0})$ and the $4$ local exponent differences of
Heun's equation gives one of the following situations:
\begin{itemize}
\item[\refpart{i}] $j(t)=1728$, and at least $2$ 
exponent differences are equal up to multiplication by $-1$;
\item[\refpart{ii}] $j(t)=0$, and at least $3$ 
exponent differences are equal up to multiplication by $-1$;
\item[\refpart{iii}] $j(t)=35152/9$, 
and the Heun equation is $\heunde{\pm1/2,\la,\pm2\la,\lb}$
for some $\la,\lb\in\CC$; 
\item[\refpart{iv}] $j(t)=1556068/81$,  
and the Heun equation is $\heunde{\la,\pm2\la,\lb,\pm2\lb}$
or $\heunde{\la,\pm3\la,\lb,\pm3\lb}$ for some $\la,\lb\in\CC$;
\item[\refpart{v}] the $j$-invariant is listed in the second column of Table $\ref{heunred}$
among the entries \PR{25}--\PR{40}, \PR{42}--\PR{50}, \mbox{\PR{56}--\PR{59}}, and the 
exponent differences satisfy the respective pattern in the third column up to multiplication by $-1$.
\item[\refpart{vi}] up to the conjugation $i\mapsto -i$, $\omega\mapsto-\omega-1$,
the $j$-invariant is listed in the \PR{60} or \PR{61} entry of Table $\ref{heunred}$, and the 
exponent differences satisfy the respective pattern in the third column
up to multiplication by $-1$.
\end{itemize}
\end{theorem}
\begin{proof} A detailed inspection of Tables  \ref{heunred0} and \ref{heunred},
and additional analysis of Heun equations with the same $j$-invariant and matching pattern of 
exponent differences proves the statement.
\end{proof}

Theorem \ref{th:t} gives necessary conditions for a given Heun equation to be reducible
to a hypergeometric one by the considered pull-back transformations.
For a set of sufficient conditions, see Theorem \ref{th:tinv}.

The number of different $j$-invariants in the reducible Heun equations is 32,
counting pairs of conjugate values of P60 and P61 as two different numbers.
The number of different Heun equations up to M\"obius transformations is 38.

\begin{table} \small
\renewcommand{\arraystretch}{1.25}
\begin{tabular}{lll} 
\hline Id 
& $t-$values  & $a+b=c$ \\   \hline
\PR{1}/\PR{14} 
& $-1$, $2$, $\frac{1}{2}$ & $1+1=2$ \\
\PR{15}/\PR{18} 
& $-3,\,4,\,-\frac{1}{3},\,\frac{4}{3},\,\frac{1}{4},\,\frac{3}{4}$ & $1+3=2^2$ \\
\PR{19}/\PR{24} 
& $-8,\,9,\,-\frac{1}{8},\,\frac{9}{8},\,\frac{1}{9},\,\frac{8}{9}$ & $1+2^3=3^2$ \\
\PR{25} 
& $-4,\,5,\,-\frac{1}{4},\,\frac{5}{4},\,\frac{1}{5},\,\frac{4}{5}$ & $1+2^2=5$\\
\PR{26} 
& $-\frac{16}{9},\,\frac{25}{9},\,-\frac{9}{16},\,\frac{25}{16},\,\frac{9}{25},\,\frac{16}{25}$ & $3^2+4^2=5^2$ \\
\PR{27}/\PR{28} 
& $-80,\,81,\,-\frac{1}{80},\,\frac{81}{80},\,\frac{1}{81},\,\frac{80}{81}$ & $1+2^4\cdot 5=3^4$ \\
\PR{29} 
& $-\frac{25}{2},\,\frac{27}{2},\,-\frac{2}{25},\,\frac{27}{25},\,\frac{2}{27},\,\frac{25}{27}$ & $2+5^2=3^3$ \\
\PR{30} 
& $-27,\,28,\,-\frac{1}{27},\,\frac{28}{27},\,\frac{1}{28},\,\frac{27}{28}$ & $1+3^3=2^2\cdot 7$ \\
\PR{31}/\PR{32} 
& $-\frac{27}{5},\,\frac{32}{5},\,-\frac{5}{27},\,\frac{32}{27},\,\frac{5}{32},\,\frac{27}{32}$ & $5+3^3=2^5$ \\
\PR{33}/\PR{34} 
& $-\frac{125}{3},\,\frac{128}{3},\,-\frac{3}{125},\,\frac{128}{125},\,\frac{3}{128},\,\frac{125}{128}$
& $3+5^3 =2^7$ \\
\PR{35} 
& $-\frac{49}{32},\,\frac{81}{32},\,-\frac{32}{49},\,\frac{81}{49},\,\frac{32}{81},\,\frac{49}{81}$
& $2^5+7^2=3^4$ \\
\PR{36} 
& $-\frac{125}{64},\,\frac{189}{64},\,-\frac{64}{125},\,\frac{189}{125},\,\frac{64}{189},\,\frac{125}{189}$
& $2^6 +5^3=3^3\cdot7$ \\
\PR{37} 
& $-16\pm 12 \sqrt{2},\,17\pm12\sqrt{2},\,\frac{1}{2}\pm\frac{3\sqrt{2}}{8}$
& $(1-\sqrt{2})^2+(\sqrt{2})^5 =(1+\sqrt{2})^2$  \\
\PR{38} 
&  $-7\pm 4\sqrt{3}$, $8\pm 4\sqrt{3}$, $\frac{1}{2}\pm \frac{\sqrt{3}}{4}$
& $1+(2+\sqrt{3})^2=(2-\sqrt{3})(1+\sqrt{3})^4$ \\
\PR{39} 
& $-96\pm 56\sqrt{3},\,97\pm 56\sqrt{3},\,\frac{1}{2}\pm\frac{7\sqrt{3}}{24}$
& $1+\sqrt{3}\,(2-\sqrt{3})(1+\sqrt{3})^6=(2+\sqrt{3})^4$  \\
\PR{40}/\PR{41} 
& $-26\pm 15\sqrt{3},\,27\pm15\sqrt{3},\,\frac{1}{2}\pm\frac{5\sqrt{3}}{18}$
& $(2+\sqrt{3})^2+(2-\sqrt{3})=(\sqrt{3})^3(1+\sqrt{3})$  \\
\PR{42} 
& $-8\pm4\sqrt{5}, 9\pm4\sqrt{5}, \frac12\pm\frac{\sqrt{5}}{4}$
& $\left(\frac{1+\sqrt{5}}{2}\right)^3+\left(\frac{1-\sqrt{5}}{2}\right)^3=2^2$ \\
\PR{43} 
& $\frac{-123\pm55\sqrt{5}}{2},\,\frac{125\pm 55\sqrt{5}}{2},\,\frac{1}{2}\pm\frac{11\sqrt{5}}{50}$
& $\left(\frac{1+\sqrt{5}}{2}\right)^5+\left(\frac{\sqrt{5}-1}{2}\right)^5=(\sqrt{5})^3$ \\
\PR{44} 
& $\frac{-7\pm 24i}{25},\,\frac{32\pm24i}{25},\,\frac{1}{2} \pm\frac{3i}{8}$
& $(2+i)^2+(1+i)^6=(2-i)^2$ \\
\PR{45}/\PR{46} 
& $\frac{8\pm 44i }{125},\, \frac{117\pm44i}{125},\,\frac{1}{2}\pm\frac{11i}{4}$
& $(2+i)^3+(1+i)^4=-(2-i)^3 $ \\
\PR{47} 
& $\frac{4\pm 10\sqrt{-2}}{27},\,\frac{23\pm10\sqrt{-2}}{27},\,\frac{1}{2}\pm \frac{5\sqrt{-2}}{4}$
& $(1+\sqrt{-2})^3+(\sqrt{-2})^3=(1-\sqrt{-2})^3$  \\
\PR{48}/\PR{49} 
& $\frac{17\pm 56\sqrt{-2}}{81},\,\frac{64\pm 56\sqrt{-2}}{81},\,\frac{1}{2}\pm\frac{7\sqrt{-2}}{16}$
& $(1-\sqrt{-2})^4+(\sqrt{-2})^7=(1+\sqrt{-2})^4$ \\
\PR{50} 
& $\frac{2\pm 22\sqrt{-2} }{243},\,\frac{241\pm 22\sqrt{-2}}{243},\,\frac{1}{2}\pm \frac{11\sqrt{-2}}{2}$
& $(1+\sqrt{-2})^5+(1-\sqrt{-2})^5=-(\sqrt{-2})^2$  \\
\PR{51}/\PR{55} 
& $\frac{1}{2}\pm\frac{\sqrt{-3}}{2}$ &  $(-\omega)+(1+\omega)=1$  \\
\PR{56} 
& $\frac{27\pm39\sqrt{-3}}{98},\,\frac{71\pm 39\sqrt{-3}}{98},\,\frac{1}{2}\pm\frac{13\sqrt{-3}}{18}$
& $(3+2\omega)^2+(1+\omega)(1+2\omega)^3=\omega(1-2\omega)^2$  \\
\PR{57} 
& $\frac{-87\pm 91\sqrt{-7} }{256},\,\frac{343\pm91\sqrt{-7}}{256},\,\frac{1}{2}\pm\frac{13\sqrt{-7}}{98}$
& $\left(\frac{1+\sqrt{-7}}{2}\right)^7+(\sqrt{-7})^3=\left(\frac{1-\sqrt{-7}}{2}\right)^7$  \\
\PR{58} 
& $\frac{243\pm171\sqrt{-15}}{1024}\!,
\frac{781\pm171\sqrt{-15}}{1024}\!, \frac{1}{2}\!\pm\!\frac{19\sqrt{-15}}{54}\!$
& $\!\left(\frac{\!1+\sqrt{-15\!}}{2}\right)^{\!4} \! \left(\frac{\!3-\sqrt{-15\!}}{2}\right) \! + \! 3^3 \! = \!
\left(\frac{\!1-\sqrt{-15\!}}{2}\right)^{\!4} \!\! \left(\frac{\!-3-\sqrt{-15\!}}{2}\right)\!\!$ \\ %
\PR{59} 
& $\frac{-7\pm33\sqrt{-15}}{128},\frac{135\pm33\sqrt{-15}}{128},\frac{1}{2}\pm\frac{11\sqrt{-15}}{90}$
& $\!\left(\frac{1+\sqrt{-15}}{2}\right)^{\!3}\!+3\sqrt{-15}=\left(\frac{1-\sqrt{-15}}{2}\right)^{\!3}$  \\
\PR{60} 
& $-2i,\,1+2i,\,1-\frac{i}{2},\,\frac{i}{2},\,\frac{1-2i}{5},\,\frac{4+2i}{5}$ & $1+(1+i)^2=(1+2i)$  \\ %
\PR{61} 
& $\!\frac{1+3\omega}{4}\!,\frac{3-3\omega}{4}\!,\frac{-8-12\omega}{7}\!,\frac{15+12\omega}{7}\!,\frac{1-4\omega}{9}\!,\frac{8+4\omega}{9}\!$ & $(1+3\omega)+(1-\omega)^2(2+\omega)=2^2$ \\ %
\hline
\end{tabular} \centering
\caption{The $t$-values of reducible Heun equations.} \label{tb:abc}
\end{table}

\subsection{Arithmetic observation of the $t$-values}
\label{tvalues}

The second column of Table \ref{tb:abc} gives all possible values of the $t$-parameter
of the reducible Heun equations considered here. A look at the rational $t$-values in the
P19--P36 cases reveals several nicely factorizable integers like 81, 32, 125, 128 in the numerators
or denominators of the $t$-values. Algebraic $t$-values have nice factorization expressions as well.
For example, in cases P43, P50, P57 we have
\[
\frac{-123+55\sqrt{5}}2=-\left(\frac{1-\sqrt{5}}2\right)^{\!10},\quad
\frac{241+22\sqrt{-2}}{243}=-\frac{\left(1+\sqrt{-2}\right)^{10}}{3^5},\quad
\frac{-87+91\sqrt{-7}}2=\left(\frac{1-\sqrt{-7}}2\right)^{\!14}.
\]
Often all six $t$-values in an orbit under fractional-linear
transformations factorize rather remarkably. 

A compact expression for the classical orbit (\ref{eq:allts}) of six $t$-values
is an identity
\begin{equation} \label{eq:abc}
a+b=c,
\end{equation}
where the vector $(a,b,c)$ of numbers is a multiple of $(t,1-t,1)$.
The orbit of six $t$-values is recovered as $\{a/c,b/c,c/a,c/b,-a/b,-b/a\}$.
When $t$ is a rational number, a convenient $(a,b,c)$ triple is obtained
by clearing the denominators of $(t,1-t,1)$, so that $a,b,c$ in (\ref{eq:abc})
are pairwise co-prime integers. For example, the $abc$ identity for P26 is
$9+16=25$, reminding the most famous Pythagorean triangle.
For $t$-values in an algebraic number field $K$,
its proportional identities $a+b=c$ 
in algebraic integers of $K$  can be considered
as a single point $(a:b:c)$ on the projective line $\PP^1$ over $K$.
If the ring of algebraic integers is a {\em principal ideal domain},
the numbers $a,b,c$ can be chosen to be ``co-prime"
but the identity can be multiplied by units.

The third column of Table \ref{tb:abc} spells out arithmetic $abc$-identities
defining the $t$-values of the encountered Heun equations, including those over
algebraic number fields. 
Many of the identities are indeed attractive, as only factors of small norm
are involved. This is not accidental. Our coverings are Belyi maps,
and it is known \cite{Beckmann89} that Belyi maps degenerate only modulo
primes (or prime ideals) of small size. Arithmetic properties of the cross-ratio $t$
of presumably different 4 points clearly reflect the primes of bad reduction for the Belyi maps.
This is the reason why the numbers in the listed $a+b=c$ identities are highly factorizable,
or only a few primes are involved.

Equations like (\ref{eq:abc}) with $a,b,c$ prescribed to involve only a small set $S$ of primes
are known in number theory as {\em S-unit equations}. They typically have only finitely many
solutions up to scalar multiplication \cite{Schlickewei}. Diophantine equations for
``highly factorizable" integers are enjoy wide popular interest. In particular,
solving Fermat's equation $x^n+y^n=z^n$ in integers was a famous open problem for centuries.
After Wiles' resolution of Fermat's problem in 1995, a prominent generalizing
arithmetic conjecture is the {\em abc-conjecture} of Masser and Osterl\'e \cite{Granville2002}.
It states that for any real $\varepsilon>0$ there should be finitely many
identities (\ref{eq:abc}) with co-prime integers $a,b,c$ such that
the {\em quality ratio}
\begin{equation} \label{eq:abcq}
Q(a,b,c):=\frac{\log \max(|a|,|b|,|c|) }{\log \mbox{rad}(abc)}
\end{equation}
is greater than $1+\varepsilon$. Here the {\em radical} rad$(n)$ is the product of
prime numbers dividing $n$. For example, the quality ratio of $3+125=128$,
which gives the $t$-values for P33/P34, is equal to $\log(128)/\log(30)\approx 1.426565$.
Currently there are over 200 examples  known \cite{desmitabc} with the quality ratio greater than 1.4.

For comparison, the $abc$-theorem \cite[Proposition 2]{Granville2002} for polynomials states that
for any identity (\ref{eq:abc}) with co-prime polynomials $a,b,c\in\CC[x]$ of maximal degree $D$,
the number of different roots of the product $abc$ is at least $D+1$.
This is a familiar consequence of the Hurwitz formula, as in \cite[Lemma 3.2]{HeunClass}.
The bound $D+1$ is attained when the rational function $a/c$ is a Belyi map and
its value at $x=\infty$ is $0,1$ or $\infty$ on $\PP^1$.

There is a generalization of the $abc$-conjecture over number fields \cite{abcnf},
where the definition of the quality ratio in (\ref{eq:abcq}) is adjusted as follows.
The numerator is replaced by the logarithm of the {\em height} of $(a:b:c)\in\PP^1(K)$,
and rad$(abc)$ is replaced by the product of (the absolute value of) the field discriminant
and the norms of the {\em prime ideals} (or {\em places, non-archimedean norms})
which reduce $(a:b:c)$ to a trivial point like $(1:0:1)$.
For example, the quality ratio for the P48/P49 identity is computed as
$\log\max(3^4,2^7,3^4)/\log(8\cdot3\cdot2\cdot3)\approx1.074487$,
while the quality ratio for the P43 identity is equal to $\log\max(1^5,1^5,5^3)/\log(5\cdot5)=1.5$.
Among the encountered number fields, only $\QQ(\sqrt{-15})$ is not a principal ideal domain.
This field defines two transformations with the same covering $H_{37}$. 
The quality ratio for the P58 identity is equal to
$\log\max(2^9,2^9,3^5)/\log(15\cdot2\cdot2\cdot3)\approx1.201305$,
while for the P59 identity it is equal to
$\log\max(4^3,9\cdot15,4^3)/\log(15\cdot2\cdot3\cdot5\cdot2)\approx0.721110$.

In total, Table \ref{tb:abc} contains 12 identities $a+b=c$ of the quality ratio greater than 1.
Two identities (for P33/P34 and P43) have the quality ratio greater than 1.4.
The encountered $t$-values are of relatively small size, and the $abc$ identities are not
groundbreaking. However, the identity for P45/P46 recently brought a \$50 prize
to Fred W.~Helenius \cite{Gaussians}.
Considering Belyi maps and cross ratios of 4 points in the three branching fibers
may be a fruitful strategy for finding interesting $abc$ triples, especially
over algebraic number fields. The non-parametric ``hyperbolic" hypergeometric-to-Heun
transformations \cite{VidunasHoeij} give more known high quality examples, such as
\[ 1+2^55^23=7^4, \quad
\left(\frac{1+\sqrt{-7}}{2}\right)^{\!13}\!+\sqrt{-7}=\left(\frac{1-\sqrt{-7}}{2}\right)^{\!13}\!, \!\qquad
\left(\frac{\sqrt{5}-1}{2}\right)^{\!12}\!+2^43^2\sqrt{5}=\left(\frac{1+\sqrt{5}}{2}\right)^{\!12}\!
\]
with the respective quality ratios 1.455673, 1.707222, 1.697794,
and a new identity in $\QQ(\sqrt{-14})$ with the quality ratio 1.581910.

\section{Deriving Heun-to-hypergeometric identities}
\label{heunhpgid}

With the list of suitable Belyi coverings at hand, pull-back transformations (\ref{eq:algtransf})
between hypergeometric and Heun equations are obtained by normalizing the Belyi maps
with M\"obius transformations (so that the singularities of the pulled-back equation would
indeed be at $x=\infty$, $x=0$, $x=1$, $x=t$), and by choosing suitable gauge prefactors $\theta(x)$.
The parameters $a,b,c,d$ and $A,B,C$ of the related differential equations (\ref{HGE})
and (\ref{Heun}) 
are determined by the 
exponent differences assigned to the singular points.
The accessory parameter $q$ can be determined by Lemma \ref{th:acq} here below,
or by considering the first power series terms of a two-term Heun-to-Gauss identity.
A pull-back transformation can be composed with the fractional-linear symmetries of
the hypergeometric and Heun equations, described   in Appendix A.

The role of the prefactor $\theta(x)$ is to get rid of {\em irrelevant singularities}
and shift a local exponent at each $x=0$, $x=1$, $x=t$ to the value 0,
as prescribed by a Riemann scheme in (\ref{eq:P}).
The direct pull-back transformation with $\theta(x)=1$ would typically give a Fuchsian equation
with several non-logarithmic singular points where the 
exponent difference is equal to 1; we call them {\em irrelevant singularities}. 
They can be turned into non-singular points by shifting their 
exponents to the values 0 and 1.
The possible irrelevant singular points are above $z=\infty$, and above
the finite singular points ($z=0$, $z=1$) where the restricted 
exponents of the hypergeometric equation are $0,-1/k$ rather than $0,1/k$. 
Besides, the relevant singular points above $z=\infty$
would typically have only non-zero local exponents, and a local exponent for all of them
except $x=\infty$ has to be shifted to the value $0$. The prefactor will have the form
$\theta(x)=\prod_i (x-\sigma_i)^{-\xi_i}$, where $\sigma_i$ are all the $x$-points where
the local exponents need to be shifted, and $\xi_i$ is the local exponent at $\sigma_i$
to be shifted to $0$. The local exponents at $x=\infty$ then shift by the sum of all $\xi_i$'s.
It is convenient to use the $P$-notation of the Riemann scheme,
as demonstrated in appendix formula (\ref{eq:ptransform}).

The prefactor is not needed if there is only one point $x=\infty$ above \mbox{$z=\infty$}.
Then one can choose the local exponents $0,1/k$ (rather than $0,-1/k$) at the restricted points
$z\in\{0,1\}$ to have no irrelevant singularities under the direct pullback.
The rational function $\varphi(x)$ defining the covering is then a polynomial.
Maier \cite{Maier} classified all parametric transformations between
hypergeometric and Heun equations without a prefactor.
A list of seven transformations was obtained.
Using our identification, these are  five Maier's indecomposable
\begin{eqnarray} \label{eq:maier}
\PR1
: & \hpgde{\alpha,\beta,\gamma}\pback{2}\heunde{\alpha,\alpha,2\beta,2\gamma}, \nonumber\\ 
\PR{15} 
: & \hpgde{\fr12,\alpha,\beta}\pback{3}\heunde{\fr12,\alpha,2\alpha,3\beta}, \nonumber\\ 
\PR{47} 
: & \hpgde{\fr12,\fr13,\alpha}\pback{4}\heunde{\fr12,\fr12,\fr13,4\alpha},\\ 
\PR{51} 
: & \hpgde{\fr13,\alpha,\beta}\pback{3}\heunde{\alpha,\alpha,\alpha,3\beta}, \nonumber\\ 
\PR{59} 
: & \hpgde{\fr12,\fr13,\alpha}\pback{5}\heunde{\fr12,\fr13,\fr13,5\alpha},\nonumber 
\end{eqnarray}
and two composite transformations:
\begin{eqnarray} 
\PR3 
: & \hpgde{\fr12,\alpha,\beta}\pback{2}\hpgde{\alpha,\alpha,2\beta}\pback{2}
\heunde{\alpha,\alpha,2\alpha,4\beta}, \\ 
\PR{52} 
: & \hpgde{\fr12,\fr13,\alpha}\pback{2}\hpgde{\fr13,\fr13,2\alpha}
\pback{3}\heunde{\fr13,\fr13,\fr13,6\alpha}. \nonumber 
\end{eqnarray}
The coverings are \PH{32} to \PH{38} in a mixed up order.
A proper normalization by the fractional-linear symmetries of Appendix A is required to avoid the prefactor.
In addition, several more transformations without a prefactor are given in \cite{Maier}  for
the degenerate Heun equation with $ab=q=0$. The function $\varphi(x)$ does not   have
to be a polynomial then, as the points above $z=\infty$ immediately have a local exponent $0$.

Two-term identities between the Heun and hypergeometric functions are derived
by identifying standard local solutions at the corresponding points of the related
Heun and hypergeometric 
equations. By fractional-linear transformations,
any singular $x$-point can be chosen as $x=0$ and its projection as $z=0$.
Then we are identifying the standard Heun and hypergeometric series at $x=0$.
This determines a two-term identity up to fractional-linear transformations
(\ref{frlin2f1}) and (\ref{frlinhb})--(\ref{frlinha}). The prefactor $\theta(x)$ has to
be normalized to the value $\theta(x)=1$ at $x=0$. If the 
exponent difference at $z=0$ is an unrestricted parameter, 
changing its sign gives essentially the same two-term identity.
More generally, the following choices of $x=0$ give the same two-term identities up to
the fractional-linear transformations and change of parameters:
\begin{itemize}
\item[\refpart{i}] the $x$-points 
with the same branching index and above the same point of $\PP_z^1$;
\item[\refpart{ii}]  the $x$-points 
with the same branching index, if they are in different fibers with the same branching pattern, and
either the local exponents at the corresponding $z$-points are the same, or the 
exponent differences at both $z$-points are free parameters.
\end{itemize}

A pull-back transformation (\ref{eq:algtransf}) between the hypergeometric and (or) Heun equations
might fail to produce two-term identities between the hypergeometric and (or) Heun solutions
only if  all singular points of the transformed equation lie above non-singular points of
the starting equation. The singularities of the transformed equation are then apparent,
and the pull-back covering is typically not a Belyi map so that 
\cite[Proposition 3.3]{HeunClass}  likely applies.
An example of such a transformation in given in \cite[Remark 5.9]{VidunasDihTr};
it is a composition of $\hpgde{1/2,1/2,1/2}\stackrel{4}{\longleftarrow}\heunde{1,1,1}$
and $\hpgde{1,1,1}\stackrel{3}{\longleftarrow}\heunde{3,2,2}$ with general ramification fibers
in the second transformation. On the other hand, transformation identities between the hypergeometric
and Heun functions might formally exist without a pull-back 
between their equations. For example, the linear function $\hpg21{\!-1,\,b}{c}{z}$ can be formally
transformed to any (hypergeometric or Heun) polynomial. Part 2 of \cite[Lemma 2.1]{VidunasFE}
indicates that this situation can occur only if we start with a hypergeometric function actually 
satisfying a first order Fuchsian equation.

If the exponent difference at $z=0$ is restricted to $1/k$ with $k\in\ZZ$,
the choices $0,1/k$ and $0,-1/k$ of local exponents give different identities.
Changing the sign of the 
exponent difference at $z=0$ basically gives
an identity between the other two local solutions (with non-zero local exponents)
at $x=0$ and $z=0$. For transformations between hypergeometric functions,
this situation is captured by \cite[Lemma 2.3]{VidunasFE}.
Here is a reformulation for identities between the Heun and hypergeometric functions.

\begin{lemma} \label{localx0}
Suppose that we have the identity $(\ref{eq:H-2F1 gen})$ coming from a pull-back transformation
between the corresponding hypergeometric and Heun equations.
Then $\varphi(x)^{1-C}\sim Kx^{1-c}$ as $x\rightarrow 0$ for
some constant $K$, and the following  identity holds:
$$
\heun{t}{q_1}{{1+a-c,\,}{1+b-c}}{{2-c;\,}{d}}{x}=\Theta(x)\,\hpg{2}{1}{1+A-C,\,1+B-C\,}{2-C}{\,\varphi(x)},
$$
where $q_1=q-(c-1)(a+b-c-d+d\, t+1)$ and
$\Theta(x)=\theta(x)\,\varphi(x)^{1-C}\big/K x^{1-c}.$
\end{lemma}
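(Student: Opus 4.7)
The plan is to leverage the pair of Frobenius solutions at $x=0$ and at $z=0$. Let $k$ denote the branching index of $\varphi$ at $x=0$ and $\gamma$ the local exponent of $\theta$ there. By Lemma \ref{genrami}, the local exponents of the pulled-back equation at $x=0$ are $k\alpha+\gamma$ for $\alpha\in\{0,\,1-C\}$. Since identity (\ref{H-2F1 gen}) identifies the Heun solution of exponent $0$ at $x=0$ with the hypergeometric solution of exponent $0$ at $z=0$, and the normalization $\theta(0)=1$ forces $\gamma=0$, matching the second local exponent $1-c$ of the Heun equation at $x=0$ against the other possibility $k(1-C)$ gives $k(1-C)=1-c$. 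Writing $\varphi(x)=x^k\,u(x)$ with $u(0)\neq 0$, this yields $\varphi(x)^{1-C}=K\,x^{1-c}(1+O(x))$ with $K=u(0)^{1-C}$, proving the asymptotic claim.

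For the identity, I apply the same pull-back (\ref{algtransf}) to the other local hypergeometric solution $z^{1-C}\hpg{2}{1}{1+A-C,\,1+B-C}{2-C}{z}$ at $z=0$. It is mapped to
\[
\theta(x)\,\varphi(x)^{1-C}\;\hpg{2}{1}{1+A-C,\,1+B-C}{2-C}{\varphi(x)},
\]
which is again a solution of the pulled-back Heun equation. Its leading behaviour at $x=0$ is $K\,x^{1-c}(1+O(x))$, so it must coincide, up to a scalar, with the unique local Frobenius solution of the Heun equation at $x=0$ with exponent $1-c$. From the local bases displayed earlier in the paper, that solution is
\[
x^{1-c}\,\heun{t}{q_1}{1+a-c,\,1+b-c}{2-c;\,d}{x}
\]
with $q_1=q-(c-1)(a+b-c-d+d\,t+1)$. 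The value of $q_1$ can either be quoted directly from that list or verified by substituting the Frobenius ansatz $x^{1-c}(1+\sum_{n\ge 1}\alpha_n x^n)$ into (\ref{Heun}) and reading off the effective parameters of the shifted equation satisfied by the series factor.

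Matching the two expressions and taking $x\to 0$ (using $\theta(0)=1$ together with the fact that both the hypergeometric and Heun factors equal $1$ at the origin) pins the proportionality constant to $K$. Dividing by $K\,x^{1-c}$ yields the claimed identity with $\Theta(x)=\theta(x)\varphi(x)^{1-C}/(K\,x^{1-c})$. The only subtlety is to verify that $\Theta$ is a genuine radical prefactor, regular at $x=0$ with $\Theta(0)=1$; this follows because $\varphi(x)^{1-C}/x^{1-c}=u(x)^{1-C}$ is a well-defined radical function regular at $x=0$, so no spurious branching is introduced. The main obstacle is really bookkeeping the value of $q_1$, but this is already fixed by the previously listed local solution basis at $x=0$ and the uniqueness of Frobenius expansions.
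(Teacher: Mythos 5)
Your proof is correct and follows essentially the same route as the paper, whose proof is the one-line remark that the lemma is ``a straightforward identification of the other canonical local solutions of both equations at $x=0$ and $z=0$''; you have simply filled in the details (exponent matching via Lemma \ref{genrami}, the normalization $\theta(0)=1$, and quoting $q_1$ from the listed local basis at $x=0$). No gaps: the uniqueness of the Frobenius solution with exponent $1-c$ that you invoke holds in the generic parameter situation the lemma addresses.
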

\begin{proof} The lemma is proved by a straightforward identification of the other canonical local solutions
of both equations at $x=0$ and $z=0$.
\end{proof}

The accessory parameter $q$ of pulled-back 
Heun's equation can be determined later on 
by considering power series expansions at $x=0$ in a supposed two-term identity and comparing the first couple of terms in the power series. The value of $q$ is given by the following lemma.  
\begin{lemma} \label{th:acq}
Suppose that we have the  identity $(\ref{eq:H-2F1 gen})$ coming from a pull-back transformation
between the corresponding hypergeometric and Heun equations with
$$ 
\varphi(x)=\lambda\,x+O(x^2), \qquad \theta(x)=1+\mu\,x+O(x^2)
$$ 
as $x\rightarrow 0$. Then
$
\displaystyle q= c\,t \left( \mu + \frac{A\,B\,\lambda}C \right).
$
\end{lemma}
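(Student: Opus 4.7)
The plan is to extract the accessory parameter $q$ by comparing the coefficient of $x^1$ in the power series expansions of both sides of the identity $(\ref{H-2F1 gen})$ at $x=0$. Since both local solutions in play have local exponent $0$ and value $1$ at the origin, they have expansions of the form $1+h_1\,x+O(x^2)$ and matching these gives a single linear equation for $q$.

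First I would compute $h_1$ for the Heun function. Multiplying the Heun equation $(\ref{Heun})$ through by $x(x-1)(x-t)$ and substituting $y(x)=1+h_1 x+h_2 x^2+O(x^3)$, one reads off the constant term of the resulting identity. The term $x(x-1)(x-t)y''$ vanishes to order $x^0$ (since $x(x-1)(x-t)=tx+O(x^2)$), the coefficient of $y'$ reduces to $c(-1)(-t)=ct$ at $x=0$, and $(abx-q)y$ contributes $-q$. Hence $ct\,h_1-q=0$, giving $h_1=q/(ct)$. (Equivalently, this is the standard first step of the three-term recurrence for Heun's local series.)

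Next I would expand the right-hand side. From $\varphi(x)=\lambda x+O(x^2)$ and $\hpg{2}{1}{A,\,B\,}{C}{z}=1+(AB/C)z+O(z^2)$, one gets
$$\hpg{2}{1}{A,\,B\,}{C}{\varphi(x)}=1+\frac{AB\lambda}{C}\,x+O(x^2).$$
Multiplying by $\theta(x)=1+\mu x+O(x^2)$ yields
$$\theta(x)\,\hpg{2}{1}{A,\,B\,}{C}{\varphi(x)}=1+\left(\mu+\frac{AB\lambda}{C}\right)x+O(x^2).$$

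Finally I would equate the coefficients of $x$ on both sides of $(\ref{H-2F1 gen})$: $q/(ct)=\mu+AB\lambda/C$, which rearranges to the claimed formula. Note that no genuine obstacle arises — the lemma is essentially a bookkeeping statement — but one must be careful that the hypotheses $\varphi(0)=0$ and $\theta(0)=1$ are exactly what guarantees that both sides are indeed the canonical local solutions with exponent $0$ and value $1$ at $x=0$, so that the termwise comparison is legitimate.
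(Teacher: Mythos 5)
Your proof is correct and follows exactly the paper's own argument: expand both sides of (\ref{H-2F1 gen}) at $x=0$ and equate the coefficients of $x$, using $h_1=q/(ct)$ for the Heun series and $1+\bigl(\mu+AB\lambda/C\bigr)x$ for the right-hand side. The paper states this comparison in a single display; you merely spell out the derivation of $h_1$ from the recurrence, which is a harmless elaboration of the same approach.
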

\begin{proof}
Expanding both sides of (\ref{eq:H-2F1 gen}) in the power series at $x=0$ gives
$$
1+\frac{q}{c\,t}\,x+O(x^2) = 1+\mu\,x+\frac{A\,B}{C}\,\lambda\,x+O(x^2).
$$
\end{proof}
Note particularly, that if the covering $\varphi(x)$ branches at $x=0$
and the prefactor $\theta(x)$ is absent, then $q=0$ (because $\lambda=\mu=0$);
check formulas (\ref{H-2F1 1tr}) and (\ref{eq:c4}) below.

\section{Identities between the Heun and hypergeometric functions}
\label{sec:hpghe}

In this section we  briefly survey pull-back transformations between the Heun or hypergeometric functions,
and then we present parametric Gauss-to-Heun transformations.
Only parametric pull-backs 
from the hypergeometric equations with cyclic or dihedral
monodromy are not considered here (see \cite{VidunasHDD}).

We remark that we do not consider identities like
\begin{equation} \label{eq:joyce}
\heun{4}{1/2}{1/2,1/2}{1;1/2}{-\frac{4s(s-1)(s+2)(s+1)}{(2s+1)^2}}
=\sqrt{1+2s} \; \hpg21{1/2,1/2}{1}{\frac{s^3(s+2)}{2s+1}},
\end{equation}
with rational functions in both arguments, or algebraic radicals in an argument,
even if they contain a free parameter. 
Formula (\ref{eq:joyce}) is a reparametrized version
(without argument radicals) of Joyce's identity \cite{Joyce94}, cited in \cite[(24)]{Valent07} as well.
We  consider neither relations  of Heun's equations with an apparent singularity to $\hpgo32$
and other generalized  hypergeometric functions, nor relations to sums of contiguous $\hpgo21$ functions,
illustrated in \cite{Letesier94} and \cite[\S 5]{MaierPH}.

\subsection{Transformations between hypergeometric functions}

Pull-back transformations between the hypergeometric equations  give
algebraic transformations between hypergeometric functions of the form
\begin{equation}\label{HGE to HGE}
 \hpg{2}{1}{a,\,b\,}{c}{\,x} =\theta(x)\,
\hpg{2}{1}{A,\,B}{C}{\,\varphi(x)}.
\end{equation}
The classical transformations were obtained by Gauss, Goursat,
Riemann, Kummer. Here is an example of a cubic transformation
with one free parameter $a$:
\begin{equation}
\hpg{2}{1}{3a,\,\frac{1}{3}-a\,}{2a+\frac{5}{6}}{\,x}
=\left(1-4x\right)^{-3a}\hpg{2}{1}{a,\,a+\frac{1}{3}}{2a+\frac{5}{6}}{\,\frac{27x}{(4x-1)^3}}.
\end{equation}
This is the transformation
$\hpgde{1/2,1/3,\la}\pback{3}\hpgde{1/2,\,\la,\,2\la}$, with $\la=1/6-2a$.
The classical transformations have degree at most 6,
namely $\hpgde{1/2,1/3,\la}\pback{6}\hpgde{\la,\,\la,\,4\la}$
and $\hpgde{1/2,1/3,\la}\pback{6}\hpgde{2\la,\,2\la,\,2\la}$. The latter formula is given by 
\begin{equation} \label{eq:trd6}
\hpg{2}{1}{6a,\,2a+\frac{1}{3}}{4a+\frac{2}{3}}{\,x}=(1-x+x^2)^{-3a}
\;\hpg{2}{1}{a,\,a+\frac13}
{2a+\frac{5}{6}}{\frac{27}{4}\frac{x^2\,(x\!-\!1)^2}{(x^2\!-\!x\!+\!1)^3}}.
\end{equation}

Pull-back transformations between the hypergeometric equations, and
subsequently, algebraic transformations of the Gauss
hypergeometric functions, are systematically 
classified\footnote{Transformation (\ref{eq:trd6}) is presented in \cite[(28)]{VidunasFE}
with a misprint in the lower parameter $2a+5/6$. Here is a list of other inaccuracies in \cite{VidunasFE}:
(ii) the case $a\neq 0$ in (29) should be multiplied by $-1$; (iii) proof of Theorem 6.1 should refer to
\cite[Theorem 5.1]{VidunasDihTr}; (iv) uniqueness claims on pg.~162 and Remark 7.1 are dubious, 
especially with $\ell/k\in\ZZ$; see \cite[(5.47)]{VidunasDihTr} and \cite[\S 5.4]{HeunClass}. 
Furthermore, the question of Remark 7.1 about existence of Gauss-to-Gauss pull-backs 
that do not yield two-term hypergeometric formulas is answered in \cite[Remark 5.7]{VidunasDihTr}
positively with the example $\hpgde{1/2,1/2,1/2}\pback{12}\hpgde{3,2,2}$, as mentioned here
in \S \ref{heunhpgid} right affer the listing \refpart{i}--\refpart{ii}. }
in \cite{VidunasFE}. 

The well-known quadratic transformations of Gauss hypergeometric functions
have two free {parameters}:
\begin{eqnarray} \label{eq:quadr1}
\hpg{2}{1}{2a,\,2b\,}{a+b+\frac{1}{2}}{\,x} \equal
\hpg{2}{1}{a,\,b}{a+b+\frac{1}{2}}{\,4x(1-x)},\\ \label{eq:quadr2}
\hpg{2}{1}{2a,\,a-b+\frac12}{a+b+\frac{1}{2}}{\,x\,}\! \equal
(1-x)^{-2a}\hpg{2}{1}{a,\,b\,}{a+b+\frac{1}{2}}{\,-\frac{4x}{(x-1)^2}},\\
\label{eq:quadr3}
 \hpg{2}{1}{\,2a,\;b\,}{2b}{\,x} \equal
\left(1-\frac{x}{2}\right)^{\!-2a} \hpg{2}{1}{a,\,a+\frac{1}{2}\,}
{b+\frac{1}{2}}{\frac{x^2}{(2-x)^2}}.
\end{eqnarray}
The first two formulas are related by fractional-linear transformations (\ref{frlin2f1}),
whereas (\ref{eq:quadr3}) is not equivalent 
up to the fractional-linear transformations (on either $\PP_x^1$ or $\PP_z^1$),
as noted by Askey \cite{RA} and Maier \cite[Remark 4.1.2]{MaierPH}.
The dividing difference is the choice of the point $x=0$: it is a non-branching point
in (\ref{eq:quadr1})--(\ref{eq:quadr2}) but a branching point in the last formula.

But formula (\ref{eq:quadr3}) can be derived from (\ref{eq:quadr1}) by the following argument.
The functions
$$ x^{-2a}\,\hpg21{2a,\,a-b+\frac12}{2a-2b+1}{\frac1x},\qquad
(1-2x)^{-2a}\,\hpg21{a,\,a+\frac12}{1+a-b}{\frac1{(1-2x)^2}}
$$
are among the 24 Kummer solutions of the differential equations for 
the left-hand side and the right-hand side of (\ref{eq:quadr1}) respectively. Therefore the two functions
satisfy the same Fuchsian equation of order 2. We multiply both functions by $x^{2a}$,
make the substitutions $x\mapsto 1/x$ and $b\mapsto a-b+1/2$ and obtain the two functions
in (\ref{eq:quadr3}) up to a constant multiple on the right-hand side.
Those two functions satisfy the same Fuchsian equation of order 2, have the same value 
and the same local exponent at a regular singular point (with a non-integer 
exponent difference in general), so they must be equal in a neighborhood of $x=0$,
and (\ref{eq:quadr3}) follows.

An example 
of a non-classical Gauss-to-Gauss transformation 
is $\hpgde{1/2,1/3,1/7}\pback{10}\hpgde{1/3,1/7,2/7}$ given by 
\begin{eqnarray} \label{eq:hpghpg}
\hpg{2}{1}{\frac5{42},\,\frac{19}{42}}{\frac57}
{x} \equal {\textstyle
\left(1-\frac{19}9x-\frac{343}{243}x^2+\frac{16807}{6561}x^3\right)}^{-1/28}\times\nonumber\\
&& \hpg{2}{1}{\frac1{84},\,\frac{29}{84}}{\frac67}
{\frac{x^2\,(1-x)\,(49x-81)^7}{4\,(16807x^3-9261x^2-13851x+6561)^3}}.
\end{eqnarray}
The degree 10 rational function is 
one of our Belyi coverings $H_8$ up to the M\"obius transformations. This is not surprising, 
as specialization of the 
exponent difference to $\la=1/7$ turns the Heun equation for P32 to
$E(1/3,1/7,2/7)$. In the same way, all 61 Heun-to-Gauss 
parametric transformations can be specialized to Gauss-to-Gauss transformations
classified in \cite{VidunasFE}.

The identities like (\ref{eq:hpghpg}) can be verified by checking the power series at $x=0$.
But the common region of convergence usually appears to be small.
For example, (\ref{eq:hpghpg})  does not hold at $x=1$ or $x=81/49$ for the standard
analytic branches of $\hpgo21$ functions, as can be checked numerically.

\subsection{Quadratic hypergeometric-to-Heun transformations $\mbox{(P1)}$}
\label{HeHquadratic}

Quadratic Gauss-to-Heun transformations apply to Gauss hypergeometric
functions without any restriction of their parameters. The underlying reason is that
a quadratic covering branches only above 2 points, and if the branching is above the
singularities of the hypergeometric equation, there are exactly 4 points above those singularities.
Here are explicit formulas:
\begin{eqnarray} \label{H-2F1 1tr}
\heun{-1}{0}{2a,\,2b}{2c-1;\,a+b-c+1}{\,x} \equal
\hpg{2}{1}{a,\,b\,}{c}{\,x^2},\\ \label{H-2F1 2tr}
\heun{2}{4ab}{2a,\,2b}{c;\,2a+2b-2c+1}{\,x} \equal
\hpg{2}{1}{a,\,b\,}{c}{\,x(2-x)},\\  \label{H-2F1 3tr}
\heun{\frac12}{\,2a b}{\,2a,\,2b}{c;\;c}{\,x} \equal
\hpg{2}{1}{a,\,b\,}{c}{\,4x(1-x)}.
\end{eqnarray}
They were first indicated by Kuiken in \cite{Kuiken}.
Other possible polynomials $\varphi(x)$ for quadratic transformations
between the hypergeometric and Heun equations  are $1-x^2$, $(1-x)^2$, $(2x-1)^2$.
Fractional-linear transformations of $P$-symbols for the 192 Heun functions
and the related Kummer's 24 hypergeometric functions give a set of another 30
rational functions of degree 2 that transform the general hypergeometric equation
to Heun's equations (with a prefactor, in general). The 30 rational functions are given
in \cite{Kuiken} in the context of the degenerate case $ab=q=0$.

Like for hypergeometric quadratic transformations (\ref{eq:quadr1})--(\ref{eq:quadr3}),
we have two different choi\-ces for $x=0$: a branching point and a non-branching point.
Accordingly, identities (\ref{H-2F1 2tr}) and (\ref{H-2F1 3tr}) are related
by fractional-linear transformations  (\ref{frlin2f1}), (\ref{frlinha})--(\ref{frlinhb}),
whereas identity (\ref{H-2F1 1tr}) cannot be related to them by the
fractional-linear transformations. To derive (\ref{H-2F1 1tr}) from (\ref{H-2F1 2tr}),
one can observe that the functions
$$\heun{-1}{0}{2a,\;2b}{2a+2b-2c+1;\,c}{\,1-x},\qquad
\hpg{2}{1}{a,\,b\,}{a+b-c+1}{\,(1-x)^2 }$$
satisfy the same Heun equation as both sides of (\ref{H-2F1 2tr}),
and have the same local exponent and value at $x=1$.
Therefore they must be generally equal; formula (\ref{H-2F1 1tr})
is then obtained after the substitution $x\mapsto 1-x$, $c\mapsto a+b-c+1$.


\subsection{Heun-to-Heun transformations} \label{Heun2Heun}

Existence of quadratic and quartic
transformations was pointed out by Erd\'elyi in \cite[Vol. 3]{BE}.
Examples of these transformations are given by Maier in
\cite[\S 4]{MaierPH}. Here are two alternative formulas of the quadratic
transformations:
\begin{eqnarray} \label{heunquadr}
\heun{s^2}{q_1}{{2a,\,}{2a-b+1}}{{b;\,}{2a-b+1}}{\,x} \!\!\equal\!\!
\left(1+\frac{x}{s}\right)^{\!-2a}
\heun{\!\frac{4s}{(1+s)^2}}{q}{a,\,a+\frac{1}{2}}{b;\,\frac{1}{2}}{\frac{4xs}{(x+s)^2}},\\
\label{heunquadr2}
\heun{\frac{s^2}{2s-1}}{\!\frac{2abs+4qs(s-1)}{2s-1}\!}{2a,\,b}{b;\;b}{x} \!\!\equal\!\!
\left(1-\frac{x}{s}\right)^{\!-2a}
\heun{\!\frac{1}{4s(1-s)}}{q}{a,\,a+\frac{1}{2}}{b;2a-b+1}{\frac{x\,(x-1)}{(x-s)^2}},\qquad
\end{eqnarray}
where $q_1=(1+s)^2q-2 a b s$. The two formulas
are related by a series of fractional-linear transformations (and reparametrizations).
The transformation of local exponents is given by
\begin{eqnarray}
\label{eq:quadtr3} && \heunde{1/2,1/2,\,\la,\,\lb}\pback{\HT2}\heunde{\la,\,\la,\,\lb,\,\lb}.
\end{eqnarray}
All choices for $x=0$ give two-term identities related by fractional-linear transformations
(\ref{frlinhb})--(\ref{frlinha}). 

A quartic transformation can be obtained by composing two versions
of the quadratic transformation:
\begin{eqnarray} \label{eq:quadtr4}
\heunde{\fr12,\fr12,\fr12,\,\la}\pback{\HT2}
\heunde{\fr12,\fr12,\,\la,\,\la}\pback{\HT2}
\heunde{\la,\,\la,\,\la,\,\la}.
\end{eqnarray}
In the composition, we restrict particularly
$s\mapsto 1/2s$, $b\mapsto 2a+1/2$ and $a\mapsto 2a$, $b\mapsto 2a+1/2$ in the two versions.
Remarkably, transformation of the parameters $t$ and $q$ simplifies greatly.
After setting $t=s^2/(2s-1)$ in the composition, we recognize the transformation
\begin{equation} \label{eq:quarticl}
\heun{t}{4q}{4a,\,2a+\frac12}{2a+\frac12;2a+\frac12}{x}=
\left(1-\frac{x^2}{t}\right)^{\!-2a}
\heun{t}{q}{a,\,a+\frac{1}{2}}{2a+\frac12;\,\frac{1}{2}}{\frac{4tx(x-1)(x-t)}{(x^2-t)^2}}
\end{equation}
as in \cite[Theorem 4.2]{MaierPH}.
%
The composite degree 4 covering 
happens to be the Belyi covering $\PH{31}$.
Up to the M\"obius transformation $z\to1/z$, the starting Heun equation
for the quartic transformation is a general Lam\'e equation $\heunde{\fr12,\fr12,\fr12,\,\la}.$

Other Heun-to-Heun transformations are possible only for the very special case
 of Lam\'e equation $\heunde{1/2,1/2,1/2,1/2}$.  This can be seen by considering necessary
branching patterns. Other pull-back coverings cannot not be Belyi maps
(as we wish only 4 singular points), hence they ramify above all 4 singular $z$-points.
The four fibers would contain at least $2D+2$ different points, and we want at least $2D-2$
of them to be non-singular after a pull-back. But each fiber has at most $\lfloor D/2 \rfloor$
non-singular points, quickly leading to $\heunde{1/2,1/2,1/2,1/2}$.
As recalled in \cite[\S 3]{Valent07}, Carlitz \cite{Carlitz60} solved this equation
by giving an explicit basis of solutions. For the Heun equation in the canonical form,
the two independent solutions of Carlitz are
\begin{equation}
y_\pm(x)=\exp\left(\pm\sqrt{q}\int_0^x\frac{du}{\sqrt{u(u-1)(u-t)}}\right).
\end{equation}
This is an integral of a holomorphic differential on the general {\em Legendre elliptic curve}
\begin{equation} \label{legendre}
w^2=u(u-1)(u-t).
\end{equation}
Any isogeny between Legendre elliptic curves transforms the holomorphic
differentials to each other up to a scalar multiple, since the space of holomorphic differentials
on elliptic curves is one-dimensional. Vice versa, the particular branching pattern of
the coverings $\varphi(x)$ ensures the transformations $u\mapsto\varphi(x)$ of holomorphic
differentials. It follows that any degree transformations of $\heunde{1/2,1/2,1/2,1/2}$ exist,
and they correspond to the isogenies of Legendre elliptic curves. In particular,
here are the cubic and and an alternative quartic transformations:
\begin{eqnarray}
\heun{   \frac{s^3\left( s-2 \right)}{1- 2\,s} }{q\,{\left( 1 -
2s \right)}^2}{\,0,\,\frac{1}{2}\,}{\frac{1}{2};\,\frac{1}{2}}{x} \equal
\heun{\frac{s\,(s-2)^3}{(1-2s)^3}}{q}{\,0,\,\frac{1}{2}\,}{\frac{1}{2};\,\frac{1}{2}}{
\frac{x\,{\left( x+s \left( s-2 \right) \right) }^2}{{\left(\left(
1 - 2s \right) x+s^2 \right) }^2}}, \\ \label{isog4}
\heun{   s^4 }{-q(s-1)^4}{\,0,\,\frac{1}{2}\,}{\frac{1}{2};\,\frac{1}{2}}{x} \equal
\heun{\frac{(s+1)^4}{(s-1)^4}}{q}{\,0,\,\frac{1}{2}\,}{\frac{1}{2};\,\frac{1}{2}}{
\frac{x\,(s+1)^4\left( x+s^2 \right)^2}{(x-1)\left(x-s^4\right)\left(x-s^2\right)^2}}.
\end{eqnarray}
They correspond to generic isogenies of degree 3 and 4 between Legendre elliptic curves.
The $t$-values are related by algebraic equations of the modular curves corresponding
to the congruence subgroups $\Gamma_0(3)\cap\Gamma(2)$ and
$\Gamma_0(4)\cap\Gamma(2)$ of $PSL(2,\ZZ)$, respectively,
while the pull-back coverings are the isogeny transformations of the $u$-coordinate
of (\ref{legendre}), in $x$ rather than $u$.
Equivalent statements hold for isogeny transformations of any degree $D$.
Parametric quadratic transformations (\ref{heunquadr})--(\ref{heunquadr2})
applied to $\heunde{1/2,1/2,1/2,1/2}$ correspond to the generic isogeny of degree 2,
while quartic transformation (\ref{eq:quarticl})
then represents the multiplication by 2 map on (\ref{legendre}).
Both quartic transformations (\ref{eq:quarticl}) and (\ref{isog4}) are compositions
of two quadratic Heun-to-Heun transformations.

\subsection{Hypergeometric-to-Heun transformations with two parameters}

In the following subsections we present the possible Gauss-to-Heun
transformation formulas up to the  fractional-linear transformations
(\ref{frlin2f1}), (\ref{frlinhb})--(\ref{frlinha}).
As explained with the items \refpart{i}--\refpart{ii} in \S \ref{heunhpgid},
the number of different two-term identities is determined by the number of
singularities with different 
exponent differences in the same fiber
and the number of non-symmetric branching fibers.

\subsubsection{The transformation P15:
$\hpgde{1/2,\,\la,\,\lb}\stackrel{3}{\longleftarrow}\heunde{1/2,\,\la,2\la,\,3\lb}$}
\label{sec:cubicpq}

Up to frac\-tio\-nal-li\-near trans\-for\-mations, we have the
following identities:
\begin{eqnarray} \label{eq:c1}
\heun{\frac14}{\frac{9ab}4}{3a,\,3b}{\frac12;\,a+b+\frac12}{x}\equal
\hpg{2}{1}{a,\,b\,}{\frac12}{\,x(4x-3)^2},\\ \label{eq:c2}
\heun{\frac{1}{4}}{\widehat{q}_1}{3a,\,{3b}}{{\frac{3}{2};\,}{a+b+\frac{1}{6}}}{x}\equal
\left(1-\frac{4x}{3}\right)\,
\hpg{2}{1}{\! a+\frac{1}{3},\,b+\frac{1}{3}\,}{\frac{3}{2}}{\,x(4x-3)^2},
\quad\\ \label{eq:c3}
\heun{\frac34}
{\frac{27ab}{4}}{3a,\,3b}{a+b+\frac{1}{2};\,\frac{1}{2}}
{x}\equal \hpg{2}{1}{a,\,b\,}{a+b+\frac{1}{2}}{\,x(4x-3)^2},\\  \label{eq:c4}
\heun{-3}{0}{3a,\,3b}{2a+2b;\,\frac{1}{ 2}}{x}\equal
\hpg{2}{1}{a,\,b\,}{a+b+\frac{1}{2}}{\,\frac{x^2(x+3)}{4}},\\ \label{eq:c5}
\heun{\frac{4}{3}}{\widehat{q}_2}{{3a,\,}{2a+b}}{{3a+3b-\frac{1}{2};\,}{\frac{1}{2}}}{x}\equal
\left(1-\frac{3x}{4} \right)^{\!-2a}
\hpg{2}{1}{a,\,b\,}{a+b+\frac{1}{2}}{\,\frac{x^3}{(4-3x)^2}},
\end{eqnarray}
where $\widehat{q}_1=(9ab+3a+3b-1)/4$, $\widehat{q}_2=6a^2+6ab-a$. 
The five formulas represent the five non-equivalent choices for the 
exponent difference at $x=0$.
The choices for the local exponent at $x=0$ are  $1/2,-1/2,\la,2\la,3\lb$ respectively.
The first two identities are related by Lemma \ref{localx0}.
The arguments of the first four transformations are polynomials.
Note that the cubic argument in (\ref{eq:c3}) is the same as in (\ref{eq:c1})--(\ref{eq:c2})
even if the fiber for $x=0$ is different. However, the branching pattern in both fibers
and the branching order for $x=0$ is the same, so the same configuration of the
singular points $x=0$, $x=1$, $x=\infty$ is possible
(even if the local exponents at the respective points are different).
The argument in (\ref{eq:c4}) is related to $x(4x-3)^2$ by the affine
transformation $x\mapsto (x+3)/4$, giving us other point as $x=0$
on essentially the same covering.

\subsubsection{The transformation P19:
$\hpgde{1/3,\,\la,\,\lb}\stackrel{3}{\longleftarrow}\heunde{\la,\,2\la,\lb,\,2\lb}$}
Up to frac\-tio\-nal-li\-near trans\-for\-mations, we have the
following identities:
\begin{eqnarray}
\heun{9}{\widehat{q}_3\!}{3a,\,2a+b}{a+b+\frac{1}{3};2a-2b+1}{x} \!\!\equal\!\!
(1-x)^{-2a}\,\hpg{2}{1}{a,\,b\,}{a+b+\frac{1}{3}}{\,-\frac{x(x-9)^2}{27(x-1)^2}},\\ 
\heun{\frac{8}{9}}{\widehat{q}_4\!}{{3a,\,}{2a+b}}{{2a+2b-\frac{1}{3};\,}{a+b+\frac{1}{3}}}{x} \!\!\equal\!\!\!
\left(1-\frac{9x}{8}\right)^{\!-2a}\hpg{2}{1}{a,\,b\,}{\!a+b+\frac{1}{3}}{\frac{27x^2(x-1)}{(9x-8)^2}}\!,\qquad
\end{eqnarray}
where $\widehat{q}_3=18a^2-9ab+6a,$ $\widehat{q}_4=4a^2+4ab-2a/3$. 
The choice between $\alpha$ and $\beta$ for the 
exponent difference at $x=0$ gives identities related by fractional-linear transformations, 
just as the choice between $2\la$ and $2\lb$.
Hence we have only two transformation formulas.

\subsubsection{The transformation P20:
$\hpgde{1/2,\,\la,\,\lb}\stackrel{4}{\longleftarrow}\heunde{\la,\,3\la,\lb,\,3\lb}$}

Up to frac\-tio\-nal-li\-near trans\-for\-mations, we have the
following identities:
\begin{eqnarray} 
\heun{\frac{9}{8}}{\widehat{q}_5\!}{4a,\,3a+b}{3a+3b-\frac{1}{2};a+b+\frac{1}{2}}{x} \!\!\equal\!\!\!
\left(1-\frac{8x}{9}\right)^{\!-3a}\!\hpg{2}{1}{a,\,b\,}{\!a+b+\frac{1}{2}}{\frac{64x^3(x-1)}{(8x-9)^3}}\!,
\qquad\\
\heun{\!-\frac{1}{8}\!}{\widehat{q}_6\!}{4a,\,3a+b}{a+b+\frac{1}{2};3a+3b-\frac{1}{2}}{x} \!\!\equal\!\!
(1+8x)^{-3a}\hpg{2}{1}{a,\,b\,}{a+b+\frac{1}{2}}{\,\frac{64x(1-x)^3}{(8x+1)^3}},
\end{eqnarray}
where $\widehat{q}_5=9a^2+9ab-3a/2,$ $\widehat{q}_6=3a^2-5ab+3a/2$. 
The choice between $\alpha$ and $\beta$ for the 
exponent difference at $x=0$ gives identities related by fractional-linear transformations,
 just as the choice between $3\la$ and $3\lb$.
Hence we have only two transformation formulas.

\subsubsection{The transformation P51:
$\hpgde{1/3,\,\la,\,\lb}\stackrel{3}{\longleftarrow}\heunde{\la,\,\la,\la,\,3\lb}$}
\label{sec:j0cubic}

Up to frac\-tio\-nal-li\-near trans\-for\-mations, we have the
following identities:
\begin{eqnarray} \label{eq:cubc1}
\heun{-\omega}{3(1\!-\!\omega)ab}{3a,\,3b}{a\!+\!b\!+\!\frac{1}{3};a\!+\!b\!+\!\frac{1}{3}}{x} \!\!\!\equal\!\!\!
\hpg{2}{1}{\!a,\,b\,}{a+b+\frac{1}{3}}{3(2\omega\!+\!1)x(x-1)(x+\omega)\!}\!, \qquad\\
\heun{\omega+1}{3(\omega+2)ab}
{3a,\,a+b+\frac13}{3b;\,2a-b+\frac23}{x}  \!\!\!\equal\!\!\! 
\left({\textstyle1+\frac{\omega-1}3x}\right)^{-3a}
\hpg{2}{1}{\!a,a+\frac13}{b+\frac{2}{3}}{\frac{x^3}{(x\!-\!\omega\!-\!2)^3}\!}\!,\quad
\end{eqnarray}
where $\omega$ is the root of $\omega^2+\omega+1=0$.
The choices for the local exponent at $x=0$ are \mbox{$\alpha$ and $3\beta$}.
To relate the argument in (\ref{eq:cubc1}) to \cite[formula (3.6a)]{Maier}, note that
$$3(2\omega+1)x(x-1)(x+\omega)=1-\big(1-(\omega+2)x\big)^3.$$


\subsubsection{Two composite transformations (P2 and P3)}
\label{sec:compost}

As indicated in Table \ref{clasfig}, 
there are two composite Gauss-to-Heun transformations with two parameters.
They have degree 4, and transform $\hpgde{1/2,\,\la,\,\lb}$ to $\heunde{2\la,\,2\la,2\lb,\,2\lb}$
or $\heunde{\la,\,\la,2\la,\,4\lb}$.

The transformation \PR{2} can be expressed as a composition of two quadratic transformations
in three ways:
 \begin{eqnarray} \label{eq:compDD}
\PR2: && \hpgde{\fr12,\,\la,\,\lb}\pback{2}\hpgde{\la,\,\la,\,2\lb}
\pback{2}\heunde{2\la,\,2\la,\,2\lb,\,2\lb}, \nonumber\\
&& \hpgde{\fr12,\,\la,\,\lb}\pback{2}\hpgde{2\la,\,\lb,\,\lb}
\pback{2}\heunde{2\la,\,2\la,\,2\lb,\,2\lb}, \\
&& \hpgde{\fr12,\,\la,\,\lb}\pback{2}\heunde{1/2,1/2,\,2\la,\,2\lb}
\pback{\HT2}\heunde{2\la,\,2\la,\,2\lb,\,2\lb}. \nonumber
\end{eqnarray}
In the third expression, the transformation $\PR1$ is composed with 
Heun-to-Heun transformation (\ref{eq:quadtr3}).
Up to frac\-tio\-nal-li\-near trans\-for\-mations, we have one identity:
\begin{equation}
\heun{-1}{0}{{4a,\,}{2a-2b+1}}{{2a+2b;\,}{2a-2b+1}}{x}=(1-x^2)^{-2a}\hpg{2}{1}{a,\,b\,}{a+b+\frac{1}{2}}
{\,-\frac{4x^2}{(x^2-1)^2}},
\end{equation}
as the choice of the 
exponent differences $2\alpha$ or $2\beta$ for $x=0$ gives equivalent formulas.
The identity is a composition of (\ref{eq:quadr2}) and (\ref{H-2F1 1tr}).
The first two expressions in (\ref{eq:compDD}) imply a relation between $\hpgde{\la,\la,2\lb}$ 
and   $\hpgde{2\la,\lb,\lb}$, and hypergeometric identities such as
\begin{equation}
\hpg21{a, b}{2a}{x(2-x)}=(1-x)^{-b}\,\hpg21{2a-b,
b}{a+\frac12}{\frac{x^2}{4(x-1)}}.
\end{equation}
This is a bi-quadratic transformation with two free parameters. 
A few transformations of this kind  
are presented in \cite[p.~128--130]{SF}.

The transformation \PR{3} can be composed in one way:
\begin{eqnarray} \label{eq:comp35}
\PR3: && \hpgde{\fr12,\,\la,\,\lb}\pback{2}\hpgde{\la,\,\la,\,2\lb}
\pback{2}\heunde{\la,\,\la,\,2\la,\,4\lb}.
\end{eqnarray}
There are indeed three non-equivalent choices for the 
exponent difference at $x=0$, namely $\la,2\la,3\lb$.
Here are the respective formulas, up to frac\-tio\-nal-li\-near trans\-for\-mations:
\begin{eqnarray} \label{eq:q1}
\heun{\frac{1}{2}}{8ab}{{4a,\,}{4b}}{{a+b+\frac{1}{2};\,}{a+b+\frac{1}{2}}}{x}\equal
\hpg{2}{1}{a,\,b\,}{a+b+\frac{1}{2}}{ 16x(1-x)(1-2x)^2},\\   \label{eq:q2}
\heun{-1}{0}{{4a,\,}{4b}}{{2a+2b;\,}{a+b+\frac{1}{2}}}{x}\equal
\hpg{2}{1}{a,\,b\,}{a+b+\frac{1}{2}}{ 4x^2(1-x^2)},\\  \label{eq:q3}
\heun{-1}{0}{4a,\,2b}{4b-1;2a-b+1}{x} \equal
\left(1-\frac{x^2}2\right)^{\!-2a}
\hpg{2}{1}{a,\,a+\frac12\,}{b+\frac{1}{2}}{\frac{x^4}{(x^2-2)^2}}.\qquad
\end{eqnarray}
The three identities are compositions of, respectively,
(\ref{eq:quadr1}) and (\ref{H-2F1 3tr}), (\ref{eq:quadr1}) and (\ref{H-2F1 1tr}),
or (\ref{eq:quadr3}) and (\ref{H-2F1 1tr}).


\subsection{One-parameter transformations}
\label{eq:hpghe1p}

The one-parameter transformations are \PR{4}--\PR{14}, \PR{16}--\PR{18}, \PR{21}--\PR{50} and
\PR{52}--\PR{61}. This section exemplifies all {\em indecomposable} one-parameter transformations.
Composite transformations are less interesting. Especially, compositions with
Gauss-to-Gauss transformations do not affect the $t$ and $q$ parameters at all.
Appendix B spells out all compositions among the coverings \PR{1}--\PR{61},
and exemplifies the compositions \PR{9}, \PR{37}--\PR{39}, \PR{43} that
are obtained only by composing with Heun-to-Heun transformations.

Here are the indecomposable coverings, together with the covering and an illustrating formula
for each. In presenting formulas, we took a few pragmatic choices of style.
Most notably, we allow the argument of Heun functions to be a constant multiple of $x$,
so to avoid algebraic numbers (or longer expressions) in the coverings and on the right-hand sides
of our formulas. We also write some algebraic numbers
in denominators rather than numerators, when that makes a formula more compact.

\medskip \noindent
$\PR{5}:\hpgde{1/3,1/3,\,\la}\stackrel{4}{\longleftarrow}\heunde{\fr{1}{3},\fr{1}{3},\,\la,\,3\la}$,
with $\displaystyle\PH{46}:$ 
\begin{eqnarray*}
\heun{-1 }{\frac{4a(6a-1)}{3}}
{4a,\,4a+\frac{1}{3}}{\frac{2}{3};\,2a+\frac{2}{3} }{x}
=  \left( 1-2x\right)^{-3a }
\hpg{2}{1}{ a,\,a+\frac{1}{3}\,}{\frac{2}{3}}{-\frac{x(x-2)^3}{(2x-1)^3}};
\end{eqnarray*}
$\PR{25}:\hpgde{1/2,1/3,\,\la}\stackrel{5}{\longleftarrow}\heunde{\fr{1}{2},\fr{2}{3},\,2\la,\,3\la}$,
with $\displaystyle \PH{30}:$ 
\begin{eqnarray*}
\heun{\frac{1}{5}  }{\frac{2a}{3}}{5a,\,\frac{1}{2}-a}{\frac{1}{3}
;\, \frac{1}{2} }{x}= { \left( 1-5x\right)^{-2a }}
 \hpg{2}{1}{ a,\,\frac{1}{6}-a\,}{\frac{2}{3} }{\frac{x^2(9x-5)^3}{4(5x-1)^2}};
\end{eqnarray*}
$\PR{26}:\hpgde{1/2,1/3,\,\la}\stackrel{6}{\longleftarrow}\heunde{\fr{1}{3},\fr{2}{3},\,\la,\,5\la}$,
with $\displaystyle \PH{24}(x)=\frac{27x^2(x-1)(3x+125)^3}{4(9x-25)^5}$:
\begin{eqnarray*}
\heun{\frac{25}{9} }{\frac{5a}{3}}
{6a,\,4a+\frac{1}{6}}{\frac{1}{3} ;\,\frac{2}{3} }{x}
= \left( 1-\frac{9x}{25}\right)^{-5a}
\hpg{2}{1}{ a,\,\frac{1}{6}-a\,}{\frac{2}{3} }{\PH{24}(x) };
\end{eqnarray*}
$\PR{27}:\hpgde{1/2,1/4,\,\la}\stackrel{5}{\longleftarrow}\heunde{\fr{1}{2},\fr{1}{4},\,2\la,\,3\la}$,
with $\displaystyle \PH{29}(x)=\frac{x^2(x+80)^3}{(5x-32)^4}$:
\begin{eqnarray*}
\heun{-80 }{-25a(8a+1)}{5a,\,5a+\frac{1}{4}}
{4a+\frac{1}{2} ;\,\frac{1}{2} }{x}
= \left( 1-\frac{5x}{32}\right)^{-4a }
\hpg{2}{1}{ a,\,a+\frac{1}{4}\,}{2a+\frac{3}{4} }{\PH{29}(x)};
\end{eqnarray*}
$\PR{28}:\hpgde{1/2,1/3,\,\la}\stackrel{10}{\longleftarrow}\heunde{1/3,\,\la,\,4\la,\,5\la}$,
with $\displaystyle\PH9(x)=\frac{x \left(9x^3-90x^2+105x+40  \right)^3}
{64\left(x-9\right)\left(9x - 1 \right)^4}$:
\begin{eqnarray*}
\heun{\frac{1}{81}}{ \frac{50a(20a+1)}{81}}
{10a,\,\frac{5}{6} }{\frac{2}{3} ;\,2a+\frac{5}{6}}{\frac{x}{9}}
=\left(1-\frac{x}{9}\right)^{-a } \! \left(1-9x\right)^{-4a }
\hpg{2}{1}{ a,\, \frac{1}{6}-a\,}{\frac{2}{3} }{\PH9(x)};
\end{eqnarray*}
$\PR{29}:\hpgde{1/2,1/3,\,\la}\stackrel{8}{\longleftarrow}\heunde{\fr{2}{3},\la,\,2\la,\,5\la}$,
with $\displaystyle\PH{16}(x)=\frac{4x^2(x^2-8x+10)^3}{27(2x-1)^2(4x-27)}$:
\begin{eqnarray*}
\heun{\frac{2}{27} }{\frac{56a}{81}}
{8a,\,\frac{5}{6}-2a }{\frac{1}{3} ;\, 2a+\frac{5}{6} }{\frac{4x}{27}}
=\left(1-\frac{4x}{27}\right)^{-a} \! \left(1-2x\right)^{-2a}
\hpg{2}{1}{ a,\,\frac{1}{6}-a\,}{\frac{2}{3} }{\PH{16}(x) };
\end{eqnarray*}
$\PR{30}:\hpgde{1/2,1/3,\,\la}\stackrel{7}{\longleftarrow}\heunde{\fr{1}{2},\fr{1}{3},\,3\la,\,4\la}$,
with $\displaystyle\PH{23}(x)=-\frac{4x(27x^2-28x+7)^3}{(7x-4)^3} $:
\begin{eqnarray*}
\heun{\frac{27}{28} }{\frac{a(97-294a)}{24}}
{7a,\,\frac{2}{3}-a}{\frac{2}{3} ;\,\frac{1}{2} }{\frac{27x}{16}}
= \left(1-\frac{7x}{4}\right)^{-3a } 
\hpg{2}{1}{ a,\,\frac{1}{6}-a\,}{\frac{2}{3} }{\PH{23}(x)};
\end{eqnarray*}
$\PR{31}:\hpgde{1/2,1/3,\,\la}\stackrel{5}{\longleftarrow}\heunde{\fr{1}{2},\fr{2}{3},\,\la,\,4\la}$,
with $\displaystyle \PH{29}:$ 
\begin{eqnarray*}
\heun{\frac{32}{5} }{\frac{4a}{3}}{5a,\,3a+\frac{1}{6}}{\frac{1}{3} ;\,\frac{1}{2} }{x}
= \left( 1-\frac{5x}{32}\right)^{-4a}
 \hpg{2}{1}{ a,\,\frac{1}{6}-a\,}{\frac{2}{3} }{\frac{x^2(x+80)^3}{(5x-32)^4}};
\end{eqnarray*}
$\PR{32}:\hpgde{1/2,1/3,\,\la}\stackrel{10}{\longleftarrow}\heunde{1/3,\,2\la,\,3\la,\,5\la}$,
with $\displaystyle\PH{10}(x)=\frac{4x\left( 9x^3-60x^2+130x-90 \right)^3}
{9\left( 3x-8 \right)^2 \left( 4x-9 \right)^3}$:
\begin{eqnarray*}
\heun{\frac{27}{32} }{\!\frac{25a(11-30a)}{48}}
{10a,\,\frac{5}{6} }{\frac{2}{3} ;\,4a+\frac{2}{3}}{\frac{3x}{8}}
=\left(1-\frac{3x}{8}\right)^{\! -2a} \! \left(1-\frac{4x}{9}\right)^{\! -3a}
\hpg{2}{1}{ a,\,\frac{1}{6}-a\,}{\frac{2}{3} }{\PH{10}(x)};
\end{eqnarray*}
$\PR{33}:\hpgde{1/2,1/3,\,\la}\stackrel{9}{\longleftarrow}\heunde{1/2,\la,\,3\la,\,5\la}$,
with $\displaystyle\PH{13}(x)=\frac{27x(4x-3)^5}{4(x^3-12x^2-54x-2)^3}$:
\begin{eqnarray*}
\heun{\frac{3}{128} }{\frac{81a(1+51a)}{128}}
{9a,\,3a+\frac{1}{2} }{2a+\frac{5}{6};\,\frac12}{\frac{x}{32}}=
\left(1+27x+6x^2-\frac{x^3}{2}\right)^{-3a}
\hpg{2}{1}{ a,\, a+\frac{1}{3}\,}{2a+\frac{5}{6} }{\PH{13}(x)};
\end{eqnarray*}
$\PR{34}:\hpgde{1/2,1/5,\,\la}\stackrel{6}{\longleftarrow}\heunde{\fr{1}{5},\la,\,2\la,\,3\la}$,
with $\displaystyle\PH{24}(x)=\frac{27x^2(x-1)(3x+125)^3}{4(9x-25)^5}$:
\begin{eqnarray*}
\heun{-\frac{125}{3} }{-30a(1+10a)}
{6a,\,6a+\frac{1}{5}}{4a+\frac{2}{5};\,2a+\frac{7}{10} }{x}
= \left( 1-\frac{9x}{25}\right)^{-5a }
\hpg{2}{1}{ a,\,a+\frac{1}{5}\,}{2a+\frac{7}{10} }{\PH{24}(x) };
\end{eqnarray*}
$\PR{35}:\hpgde{1/2,1/3,\,\la}\stackrel{10}{\longleftarrow}\heunde{1/3,\,\la,\,2\la,\,7\la}$,
with $\displaystyle\PH8(x)=\frac{4x(x^3-12x^2+42x-42)^3}{27(4x-27)(3x-8)^2}$:
\begin{eqnarray*}
\heun{\frac{32}{81} }{ \! \frac{2a(179-686a)}{81}}
{10a,\,\frac{7}{6}-4a }{\frac{2}{3};\,2a+\frac{5}{6} }{\frac{4x}{27}}
= \left(1-\frac{4x}{27}\right)^{\!-a} \! \left(1-\frac{3x}{8}\right)^{\!-2a}
\hpg{2}{1}{a,\, \frac{1}{6}-a}{\frac{2}{3} }{\PH8(x)};
\end{eqnarray*}
$\PR{36}:\hpgde{1/2,1/3,\,\la}\stackrel{7}{\longleftarrow}\heunde{\fr{1}{2},\fr{1}{3},\,2\la,\,5\la}$,
with $\displaystyle \PH{22}(x)=\frac{4x(4x^2-35x+70)^3}{27(28x-125)^2}$:
\begin{eqnarray*}
\heun{\frac{125}{189} }{\frac{8a(38-147a)}{81}}
{7a,\,\frac{5}{6}-3a}{\frac{2}{3} ;\,\frac{1}{2} }{\frac{4x}{27}}
= \left( 1-\frac{28x}{125}\right)^{-2a} \, 
\hpg{2}{1}{ a,\,\frac{1}{6}-a\,}{\frac{2}{3} }{\PH{22}(x) };
\end{eqnarray*}
$\PR{40}:\hpgde{1/3,1/3,\,\la}\stackrel{4}{\longleftarrow}\heunde{\fr{1}{3},\fr{1}{3},\,2\la,\,2\la}$,
with $\displaystyle \PH{47}:$ 
\begin{eqnarray*}
\heun{15\sqrt3-26  }{\frac{8a(12a+1)}{3(5+3\sqrt{3})}}
{4a,\,4a+\frac{1}{3}}{\frac{2}{3};\,4a+\frac{1}{3} }{\frac{x}{5+3\sqrt{3}}}
= \left(1-2x\right)^{-3a } \,
\hpg{2}{1}{ a,\,a+\frac{1}{3}\,}{\frac{2}{3}}{\frac{x(x+4)^3}{4(2x-1)^3}};
\end{eqnarray*}
$\PR{42}:\hpgde{1/2,1/5,\,\la}\stackrel{5}{\longleftarrow}\heunde{\fr{1}{2},\la,\,2\la,\,2\la}$,
with $\displaystyle \PH{45}(x)=\frac{x(x^2-10x+5)^2}{(x+1)^5}$:
\begin{eqnarray*}
\heun{9+4\sqrt5}{\frac{5a(10a+1)}{10-4\sqrt5}}
{5a,\,3a+\frac{3}{10}}{\frac{1}{2}; \,4a+\frac{2}{5} }{\frac{5x}{5-2\sqrt5}}
= \left( 1+x\right)^{-5a} \hpg{2}{1}{ a,\,a+\frac{1}{5}\,}{\frac{1}{2} }{\PH{45}(x)};
\end{eqnarray*}
$\PR{44}:\hpgde{1/2,1/4,\,\la}\stackrel{6}{\longleftarrow}\heunde{\fr{1}{4},\fr{1}{4},\,\la,\,5\la}$,
with $\displaystyle \PH{42}(x)=\frac{256x^5}{(x+5)^4(5x^2+6x+5)}$:
\begin{eqnarray*}
\heun{\frac{-7+24i}{25}}{\! \frac{5a(40a-1)}{6-8i}}
{6a,\,4a+\frac{1}{4}}{10a-\frac{1}{4};\,\frac{3}{4} }{\frac{5x}{4i-3}}
=  \left( 1+\frac{x}{5}\right)^{\!-4a} \left( 1+\frac{6x}{5}+x^2\right)^{\!-a}
 \hpg{2}{1}{\!a,\,a+\frac{1}{4}}{2a+\frac{3}{4} }{\PH{42}(x)};
\end{eqnarray*}
$\PR{45}:\hpgde{1/2,1/3,\,\la}\stackrel{6}{\longleftarrow}\heunde{\fr{1}{2},\fr{1}{2},\,\la,\,5\la}$,
with $\displaystyle \PH{26}(x)=\frac{1728x}{(x^2+10x+5)^3}$:
\begin{eqnarray*}
\heun{\frac{117+44i}{125} }{\frac{a(49-228a)}{11-2i}}
{6a,\,\frac{5}{6}-4a}{2a+\frac{5}{6};\,\frac{1}{2} }{\frac{x}{2i-11}}
= \left(1+2x+\frac{x^2}{5}\right)^{-3a}  
\hpg{2}{1}{ a,\,a+\frac{1}{3}\,}{2a+\frac{5}{6}}{\PH{26}(x)};
\end{eqnarray*}
$\PR{46}:\hpgde{1/2,1/5,\,\la}\stackrel{6}{\longleftarrow}\heunde{\fr{1}{5},\la,\,\la,\,4\la}$,
with $\displaystyle \PH{42}(x)=\frac{x^4(25x^2-22x+5)}{4(2x-1)^5}$:
\begin{eqnarray*}
\heun{\frac{117+44i}{125}}{\frac{10a(40a-1)}{11-2i}}
{6a,\,6a+\frac{1}{5}}{8a-\frac{1}{5};\,2a+\frac{7}{10} }{\frac{25x}{11-2i}}
= \left( 1-2x\right)^{-5a }
\hpg{2}{1}{ a,\,a+\frac{1}{5}\,}{2a+\frac{7}{10} }{\PH{42}(x)};
\end{eqnarray*}
$\PR{47}:\hpgde{1/2,1/3,\,\la}\stackrel{4}{\longleftarrow}\heunde{\fr{1}{2},\fr{1}{2},\,\fr{1}{3},\,4\la}$,
with $\displaystyle\PH{36}:$ 
\begin{eqnarray*}
\heun{\frac{23+10\sqrt{-2}}{27}}{\frac{32a(1-6a)}{3(5-\sqrt{-2})}}
{4a,\,\frac{2}{3}-4a}{\frac{2}{3} ;\,\frac{1}{2} }{\frac{x}{5-\sqrt{-2}}}
=  \hpg{2}{1}{ a,\,\frac{1}{6}-a\,}{\frac{2}{3} }{-\frac{x(x-4)^3}{27}};
\end{eqnarray*}
$\PR{48}:\hpgde{1/2,1/4,\,\la}\stackrel{4}{\longleftarrow}\heunde{\fr{1}{2},\fr{1}{2},\,\la,\,3\la}$,
with $\displaystyle \PH{36}:$ 
\begin{eqnarray*}
\heun{\frac{17+56\sqrt{-2}}{81}}{\frac{a\,(17-40a)}{7-4\sqrt{-2}}}
{4a,\,\frac{3}{4}-2a}{2a+\frac{3}{4} ;\,\frac{1}{2} }{\frac{x}{-7+4\sqrt{-2}}}
= \left(1+\frac{x}{3} \right)^{-4a} \,
\hpg{2}{1}{ a,\,a+\frac{1}{4}\,}{2a+\frac{3}{4}}{\frac{256\,x}{(x+3)^4}};
\end{eqnarray*}
$\PR{49}:\hpgde{1/2,1/3,\,\la}\stackrel{10}{\longleftarrow}\heunde{1/3,\,\la,\,\la,\,8\la}$,
with $\displaystyle\PH7(x)=-\frac{4x(x^3-6x^2+15x-12)^3}{27\,(3x^2-14x+27)}$:
\begin{eqnarray*}
\heun{\frac{17+56\sqrt{-2}}{81}}{\frac{4a\,(13-64a)}{7-4\sqrt{-2}}}
{10a ,\,\frac{4}{3}-6a}{ \frac{2}{3};\, 2a+\frac{5}{6} }{\frac{3x}{7-4\sqrt{-2}}}
= \left(1-\frac{14x}{27}+\frac{x^2}{9} \right)^{-a}
\hpg{2}{1}{ a,\,\frac{1}{6}-a }{ \frac{2}{3}}{\PH7(x)};
\end{eqnarray*}
$\PR{50}:\hpgde{1/3,1/4,\,\la}\stackrel{4}{\longleftarrow}\heunde{\fr{1}{3},\la,\,\la,\,2\la}$,
with $\displaystyle \PH{36}:$ 
\begin{eqnarray*}
\heun{\frac{241+22\sqrt{-2}}{243}}{\frac{8a(7-8a)}{22-\sqrt{-2}}}
{4a,\,\frac{5}{6}}{\frac{2}{3} ;\,2a+\frac{7}{12} }{\frac{18x}{22-\sqrt{-2}}}
= \left(1-x\right)^{-4a} \,
 \hpg{2}{1}{ a,\,a+\frac{1}{4}\,}{\frac{2}{3} }{\frac{x(3x-4)^3}{27(x-1)^4}};
\end{eqnarray*}
$\PR{56}:\hpgde{1/2,1/3,\,\la}\stackrel{8}{\longleftarrow}\heunde{\fr{1}{3},\fr{1}{3},\,\la,\,7\la}$,
with $\displaystyle \PH{18}(x)=\frac{1728 x}{{(x^4- 14 x^3+63 x^2  - 70 x -7 ) }^2}$:
\begin{eqnarray*}
\heun{\frac{55+39\omega}{49} }{\!\frac{2a(71-348a)}{3(5-3\omega)}}
{8a,\,\frac76-6a }{2a+\frac{5}{6} ;\,\frac{2}{3} }{\frac{x}{5-3\omega}}
= \left(  1+10x-9x^2+2x^3-\frac{x^4}{7}\right)^{\!-2a}
\hpg{2}{1}{ \!a,\, a+\frac{1}{2}}{2a+\frac{5}{6} }{\PH{18}(x)};
\end{eqnarray*}
$\PR{57}:\hpgde{1/2,1/3,\,\la}\stackrel{9}{\longleftarrow}\heunde{1/2,\la,\,\la,\,7\la}$,
with $\displaystyle\PH{11}(x)=\frac{x \left( 2x^4-12x^3+42x^2-70x+63 \right)^2}
{27 \left( 4x^2-13x+32 \right) }$:
\begin{eqnarray*}
\heun{\frac{-87+91\sqrt{-7}}{256}}{\frac{2a(31-147a)}{13-7\sqrt{-7}}}
{9a,\,\frac{7}{6}-5a }{\frac{1}{2} ;\, 2a+\frac{5}{6}}{\frac{8x}{13-7\sqrt{-7}}}
= \left(1-\frac{13x}{32}+\frac{x^2}{8}\right)^{\!-a}
\hpg{2}{1}{ a,\,\frac{1}{6}-a\,}{\frac{1}{2} }{\PH{11}(x) };
\end{eqnarray*}
$\PR{58}:\hpgde{1/2,1/5,\,\la}\stackrel{5}{\longleftarrow}\heunde{\fr{1}{2},\la,\,\la,\,3\la}$,
with $\displaystyle \PH{37}(x)=\frac{x(x^2-10x+30)^2}{(x-4)^5}$:
\begin{eqnarray*}
\heun{\frac{781+171\sqrt{-15}}{1024}}{\frac{10a(23-45a)}{95-9\sqrt{-15}}}
{5a,\,\frac{9}{10}-a}{\frac{1}{2};\,2a+\frac{7}{10} }{\frac{20x}{95-9\sqrt{-15}}}
= \left( 1-\frac{x}4\right)^{-5a}
\hpg{2}{1}{ a,\,a+\frac{1}{5}\,}{\frac{1}{2} }{\PH{37}(x)};
\end{eqnarray*}
$\PR{59}:\hpgde{1/2,1/3,\,\la}\stackrel{5}{\longleftarrow}\heunde{\fr{1}{2},\fr{1}{3},\,\fr{1}{3},\,5\la}$,
with $\displaystyle \PH{37}:$ 
\begin{eqnarray*}
\heun{\frac{-7+33\sqrt{-15}}{128}}{\frac{25a(1-6a)}{11-3\sqrt{-15}}}
{5a,\,\frac{5}{6}-5a}{\frac{1}{2};\,\frac{2}{3} }{\frac{6x}{11-3\sqrt{-15}}}
= \hpg{2}{1}{ a,\,\frac{1}{6}-a\,}{\frac{1}{2} }{\frac{x(3x^2-10x+15)^2}{64}};
\end{eqnarray*}
$\PR{60}:\hpgde{1/2,1/4,\,\la}\stackrel{5}{\longleftarrow}\heunde{\fr{1}{2},\fr{1}{4},\,\la,\,4\la}$,
with $\displaystyle\PH{44}(x)=\frac{x(x+3-4i)^4}{(1+2i)^5(x-1)^4}$:
\begin{eqnarray*}
\heun{1+2i }{a\left(\frac{5}{4}+(7+24i)a\right)}
{5a,\,3a+\frac{1}{4}}{\frac{3}{4};\,8a }{x} = \left( 1-x\right)^{-4a}
\hpg{2}{1}{ a,\,\frac{1}{4}-a\,}{\frac{3}{4}}{\PH{44}(x)};
\end{eqnarray*}
$\PR{61}:\hpgde{1/2,1/3,\,\la}\stackrel{7}{\longleftarrow}\heunde{\fr{1}{2},\fr{1}{3},\,\la,\,6\la}$,
with $\displaystyle
\PH{21}(x)=\frac{4x(x^2-(5+4\omega)x+1+5\omega)^3}{3(\omega+2)((2\omega-1)x+9))}$:
\begin{eqnarray*}
\heun{\frac{3-12\omega}{7}}{\frac{2a(7-2(\omega-18)a)}{3+\omega}}
{7a,\,1-5a}{\frac{2}{3} ;\,\frac12}{\frac{x}{1+2\omega}}
=\left( 1-\frac{(1-2\omega)x}{9}\right)^{\!-a}
\hpg{2}{1}{ a,\, \frac{1}{6}-a\,}{\frac{2}{3} }{\PH{21}(x) }.
\end{eqnarray*}

\appendix
\section{Appendix: Fractional-linear transformations}

The hypergeometric equation (\ref{HGE}) with general $\A,\B,\C$
has the following local bases of solutions:
\begin{eqnarray*}
\mbox{at } z=0: &&\!\!\! \hpg{2}{1}{\A,\,\B\,}{\C}{\,z}, \quad
z^{1-\C}\;\hpg{2}{1}{\!1+\A-\C,\,1+\B-\C}{2-\C}{\,z};\\ \mbox{at }
z=1: &&\!\!\! \hpg{2}{1}{\A,\,\B\,}{1+\A+\B-\C}{1-z}, \quad
(1-z)^{\C-\A-\B}\hpg{2}{1}{\!\C-\A,\,\C-\B}{1+\C-\A-\B}{1-z};\\
\mbox{at } z=\infty: &&\!\!\!
z^{-\A}\;\hpg{2}{1}{\A,\,1+\A-\C\,}{1+\A-\B}{\,\frac{1}{z}}, \quad
z^{-\B}\;\hpg{2}{1}{\!\B,\,1+\B-\C}{1-\A+\B}{\,\frac{1}{z}}.
\end{eqnarray*}
The following Pfaff and Euler fractional-linear transformations
\cite[Th. 2.2.5]{SF} can be applied to the 6 local solutions:
\begin{eqnarray} \label{frlin2f1}
\hpg{2}{1}{\A,\,\B\,}{\C}{\,z}\equal(1-z)^{-\A}\;\hpg{2}{1}{\A,\,\C-\B\,}{\C}{\,\frac{z}{z-1}}
\nonumber\\
\equal(1-z)^{-\B}\;\hpg{2}{1}{\C-\A,\,\B\,}{\C}{\,\frac{z}{z-1}}\\
\equal(1-z)^{\C-\A-\B}\;\hpg{2}{1}{\C-\A,\,\C-\B\,}{\C}{\,z}.
\nonumber
\end{eqnarray}
This gives $6\times 4=24$ different hypergeometric solutions
for a general hypergeometric equation in total; 
they are referred to as the 24 {\em Kummer's solutions}.
The automorphism group of the hypergeometric equation
is the Coxeter group $\mathcal{A}_3$ of order 24 
 \cite{Dwork84}.
It contains the permutation group $\mathcal{S}_3$.
If the parameters $a,b$ are not considered as symmetric,
the automorphism group extends to a semidirect product
of $\mathcal{S}_3$ and $(\ZZ/2\ZZ)^3$, of order $3!\times 2^3=48$. The action
of this group is represented by a permutation of the 3 singular points and
interchange of the local exponents at those points. Accordingly, the group permutes
the local exponents \mbox{$(1-C,C-A-B,B-A)$} and multiplies (some of them) by $-1$.
The permutation of the singular points is realized by the M\"obius transformations
mapping $z$ to $z$, $1-z$, $1/z$, $1/(1-z)$, $z/(z-1)$ and $1-1/z$.
For integer values of the parameters $\A,\B$ or the local exponents,
the structure of Kummer's $24$ solutions degenerates \cite{Vidunas2007}.

The automorphism group of the Heun equation (\ref{Heun}) is the Coxeter group $\mathcal{D}_4$,
of order 192; see \cite{Maier192}. It contains the permutation group $\mathcal{S}_4$,
and extends to a semidirect product of $\mathcal{S}_4$ and $(\ZZ/2\ZZ)^4$ when
the parameters $a,b$ are distinguished.  Here are bases of local
solutions at $x=0$, $x=1$, $x=t$ and $x=\infty$ for a general Heun equation:
\begin{eqnarray} \label{eq:heunbasis}
\mbox{at } x=0: && \heun{t}{q}{a,\,b}{c;\,d}{x}, \qquad
x^{1-c}\,\heun{t}{\,q_1}{a-c+1,\,b-c+1}{2-c;\,d}{x}; \nonumber\\
\mbox{at } x=1: && \heun{1-t}{ab-q}{a,\;b}{d;\,c}{1-x}, \quad
(1-x)^{1-d}\,\heun{1-t}{q_2}{a-d+1,\,b-d+1}{2-d;\,c}{1-x}; \nonumber \\
\mbox{at } x=t:  &&
\heun{1-1/t}{ab-q/t}{a,\;b}{a+b-c-d+1;\,c}{1-\frac{x}t}, 
\\ && \hspace{91pt}
\left(1-\frac{x}t\right)^{c+d-a-b}\heun{1-1/t}{q_3}{c+d-a,c+d-b}{c+d-a-b+1;c}{1-\frac{x}t};
\nonumber\\ \hspace{-24pt}
 \mbox{at } x=\infty: &&x^{-a}\,\heun{1/t}{q_4}{a,\,a-c+1}{a-b+1;\,d}{\frac{1}{x}\,},
\qquad
x^{-b}\,\heun{1/t}{q_5}{b,\,b-c+1}{b-a+1;\,d}{\frac{1}{x}\,};
\nonumber
\end{eqnarray}
where
\begin{eqnarray*}
q_1 \equal q-(c-1)(a+b-c-d+d\,t+1),\\ q_2 \equal
a\,b-q-(d-1)(a+b-c\,t-d+1),\\
 q_3 \equal
a\,b-q/t+(c/t-c-d)(a+b-c-d),\\ q_4 \equal
q/t+a\,(a-b/t-c-d+d/t+1),\\ q_5 \equal q/t+b\,(b-a/t-c-d+d/t+1).
\end{eqnarray*}
Each of these functions can be expressed as the Heun series
with all six $t$-parameter values in (\ref{eq:allts}):
\begin{eqnarray} \label{frlinhb}
\heun{t}{q}{a,\,b}{c;\,d}{x} \equal
\heun{1/t}{q/t}{a,\;b}{c;\,a+b-c-d+1}{\frac{x}{t}\,} \nonumber\\
\equal
(1-x)^{-a}\,\heun{t/(t-1)}{(act-q)/(t-1)}{a,\,a-d+1}{c;\;a-b+1}{\frac{x}{x-1}}
\nonumber\\ \equal \left(1-\frac{x}t\right)^{-a}
\heun{1/(1-t)}{(q-ac)/(t-1)}{a,\,c+d-b}{c;\,a-b+1}{\frac{x}{x-t}}
\\ \equal \left(1-\frac{x}t\right)^{-a}
\heun{1-t}{ac-q}{a,\,c+d-b}{c;\;d}{\frac{(1-t)\,x}{x-t}} \nonumber
\\ \equal
(1-x)^{-a}\,\heun{1-1/t}{ac-q/t}{a,\,a-d+1}{c;\,a+b-c-d+1}{\frac{(t-1)\,x}{t\,(x-1)}}.
\nonumber
\end{eqnarray}
Besides, there are 4 transformations which do not change the argument $x$ nor
the parameter $t$:
\begin{eqnarray} \label{frlinha}
\heun{t}{q}{{a,\,}{b}}{c;\,d}{x} \equal
(1-x)^{1-d}\,\heun{t}{q-c\,(d-1)\,t}{a-d+1,\,b-d+1}{c;\,2-d}{x}
\nonumber\\ \equal \left(1-\frac{x}t\right)^{c+d-a-b}
\heun{t}{\,q_6}{c+d-a,\,c+d-b}{c;\;d}{x}\\ \equal
(1-x)^{1-d}\left(1-\frac{x}t\right)^{c+d-a-b}\heun{t}{\,q_7}{c-a+1,\,c-b+1}{c;\;2-d}{x},
\nonumber
\end{eqnarray}
where $q_6=q-c\,(a+b-c-d)$, $q_7=q-c\,(a+b-c-d+d\,t-t).$
In total, there are $6\times4=24$ two-term fractional-linear transformations of the Heun functions,
and $8\times24=192$ different Heun series solutions of a general Heun equation,
as described by Maier \cite{Maier192}. If the parameters $a,b$ are
distinguished, the full set of $2\times 192$ fractional-linear transformations is represented
by the permutation and the $-1$ action on the 
exponent differences \mbox{$(1-c,1-d,c+d-a-b,b-a)$}.

The two-term fractional-linear transformations (\ref{frlinhb}) and (\ref{frlinha}) fix the 
exponent difference at $x=0$ in this representation, characteristically.
The transformations in (\ref{frlinha}) represent
interchange of the local exponents at $x=1$ and at $x=t$.
By applying  (\ref{frlinhb}), all Heun functions in  (\ref{eq:heunbasis}) can be transformed 
to have the same $t$-parameter. In particular, we have these four functions
as solutions of the same Heun equation:
\begin{eqnarray} \label{frlintinv}
\heun{t}{q}{a,\,b}{c;\,d}{x}, \qquad
x^{-a}\,\heun{t}{t\,q_4}{a,\,a-c+1}{a+b-c-d+1;\,a-b+1}{\frac{t}{x}},  \nonumber \\ 
(x-t)^{-a}\, 
\heun{t}{q-(b-d)t}{a,\,c+d-b}{d;\,c}{\frac{t\,(x-1)}{x-t}},\\ 
(x-1)^{-a}\, 
\heun{t}{q_8}{a,\,a-d+1}{a+b-c-d+1;\;a-b+1}{\frac{x-t}{x-1}}, \nonumber
\end{eqnarray}
with $q_8=q+a(a-c-d+1)t$. They correspond to the permutations of the singularities in two pairs.
For example, the first two functions are related by the interchange of $x=0$ and $x=\infty$ and
the interchange of $x=1$ and $x=t$. Any 3 of the functions in (\ref{eq:heunbasis}) are related 
by a linear three-term {\em connection formula} (since the order of Heun's equation is 2),
though their coefficients are not known yet in general
(unlike for Kummer's solutions of the hypergeometric equation).

Transformations of the hypergeometric and Heun equations can
be conveniently presented as transformations of Riemann's $P$-symbols; for
example
\begin{eqnarray} \label{eq:ptransform}
P\left\{
\begin{array}{ccc} 0 & 1 &  \infty
\\ 0 & 0 & \A
\\ 1-\C & \C-\A-\B &  \B
\end{array} \;z\; \right\}
\equal (1-z)^{\C-\A-\B} P\left\{
\begin{array}{ccc} 0 & 1 &  \infty
\\ 0 & 0 & \C-\B
\\ 1-\C & \A+\B-\C &  \C-\A
\end{array} \;z\; \right\} \nonumber \\
\equal P\left\{
\begin{array}{ccc} 0 & 1 &  \infty
\\ 0 & 0 & \A
\\ \C-\A-\B & 1-\C &  \B
\end{array} \;1-z\; \right\}.
\end{eqnarray}

\section{Appendix: Composite transformations}
\label{sec:composite}

As Tables \ref{clasfig} and \ref{clasfig2} indicate, many parametric Heun-to-Gauss
reductions are compositions of lower degree transformations between the hypergeometric
and Heun equations. 28 are composite transformations out of the 61 listed transformations. Here we explain
the composition notation in Tables \ref{clasfig}, \ref{clasfig2}, and recount the transformations more
thoroughly.

The numbers in the decomposition notation denote the degree of component transformations.
The factor $2_H$ denotes the quadratic Heun-to-Heun transformation (\ref{eq:quadtr3})
discussed in \S \ref{Heun2Heun}. A few other indexed numbers denote particular
coverings of low degree:
$\CT3$ denotes the cyclic covering \PH{33} with the branching pattern \branch{3}{3}{1+1+1},
while $\AT4$ stands for the covering \PH{36} with the pattern \branch{4}{3+1}{2+1+1},
and $\BT4$ stands for the covering \PH{46} with the pattern \branch{3+1}{3+1}{3+1}.
The unindexed numbers 3 and 4 denote the frequent coverings \PH{34} (\branch{3}{2+1}{2+1})
and \PH{47} (\branch{3+1}{3+1}{2+2}), respectively. The product notation has to be followed
from right to left to trace the composition from the starting hypergeometric equation.
In a composition, exactly one factor represents an indecomposable Gauss-to-Heun
transformation; it is the first one from the left which is not $2_H$. The other factors to the right
represent pull-backs between hypergeometric equations.

In Tables \ref{heunred0} and \ref{heunred}, $2_H$ denotes an applicability of the
quadratic Heun-to-Heun transformation following the arrow in (\ref{eq:quadtr3}),
while $2^H$ denotes applicability of this quadratic transformation from left-to-right
in (\ref{eq:quadtr3}), and $4_H$ denotes applicability of the composite quartic
transformation  (\ref{eq:quadtr4}) following the arrows.

The product $\DDT$ in Tables \ref{clasfig} and \ref{clasfig2} denotes 
a composition of quadratic transformations that can be realized in multiple ways,
possibly including $\HT2$. Mainly, it indicates involvement of the degree 4 
transformation $\PR2$ realized by the covering \PH{31}. As presented in (\ref{eq:compDD}), 
the transformation \PR2 can be split into quadratic transformations in three ways. 
The same covering $\PH{31}$ realizes 
the quartic Heun-to-Heun transformation (\ref{eq:quadtr4}). 

The other composite transformation with 2 free parameters is \PR3,
realized by the Belyi covering $\PH{35}$. The composition is given in (\ref{eq:comp35}).
The same covering $\PH{35}$ realizes this composite transformation: 
\begin{equation} \label{eq:trp37}
\PR{37}: 
\hpgde{\fr12,\fr14,\,\la}\pback{2}\heunde{\fr12,\fr12,\fr12,\,2\la}
\pback{\HT2}\heunde{\fr12,\fr12,\,2\la,\,2\la}.
\end{equation}
The specialization $\beta=1/4$ of P2 gives a composite transformation
of the same appearance
$\hpgde{\fr12,\fr14,\,\la}$ $\pback{2}\heunde{\fr12,\fr12,\fr12,\,2\la}
\pback{\HT2}\heunde{\fr12,\fr12,\,2\la,\,2\la}$.
But the coverings are different, 
and  the pulled-back Heun equations have different sets of  $t$-parameters.

Here are the most complicated composition lattices, for the transformations $\PR{10}$ and $\PR{12}$.
They include specialized splittings (\ref{eq:compDD}) of \PR2, 
and are realized by the Belyi coverings $\PH{41}$ and $\PH{5}$, respectively:
\begin{eqnarray} \label{eq:trp10}
\begin{picture}(420,33)(0,18)
\put(0,20){P10: $\hpgde{\fr12,\fr14,\la}$}
\put(122,43){\vector(-2,-1){28}}  \put(126,40){$\hpgde{\fr12,\,\la,\,\la}$} 
\put(117,23){\vector(-1,0){25}}  \put(119,20){$\hpgde{\fr14,\fr14,\,2\la}$} 
\put(104,5){\vector(-1,1){13}} \put(105,0){$\heunde{\fr12,\fr12,\fr12,2\la}$} 
\put(216,28){\vector(-2,1){28}}   
\put(218,23){\vector(-1,0){25}}  
\put(216,19){\vector(-1,-1){12}}  
\put(220,20){$\heunde{\fr12,\fr12,2\la,2\la}$}  \put(221,0){$\heunde{\fr12,\fr12,2\la,2\la}$}
\put(205,1){$\Longleftarrow$} 
\put(240,40){$\hpgde{\la,\,\la,\,2\la}$} \put(238,43){\vector(-1,0){43}}  
\put(330,20){$\heunde{2\la,2\la,2\la,2\la}$} \put(329,23){\vector(-1,0){15}}
\put(329,17){\vector(-1,-1){13}}   
\put(330,29){\vector(-3,1){36}}   
\end{picture}
\\
\begin{picture}(420,56)(0,20)
\put(0,20){P12: $\hpgde{\fr12,\fr13,\la}$}
\put(121,43){\vector(-2,-1){30}}  \put(124,40){$\hpgde{\fr13,\fr13,2\la}$} 
\put(126,23){\vector(-1,0){31}}  \put(129,20){$\hpgde{\fr12,\la,\,2\la}$} \put(113,26){\scriptsize 3}
\put(223,38){\vector(-3,-1){30}}  \put(226,40){$\hpgde{2\la,\,2\la,\,2\la}$} 
\put(228,22){\vector(-1,0){34}}  \put(231,20){$\hpgde{\la,\,\la,\,4\la}$} 
\put(206,5){\vector(-1,1){13}} \put(208,0){$\heunde{\fr12,\fr12,2\la,4\la}$} 
\put(316,28){\vector(-2,1){26}}  
\put(317,23){\vector(-1,0){30}} 
\put(317,17){\vector(-1,-1){13}} 
\put(319,20){$\heunde{2\la,2\la,4\la,4\la}$}
\put(224,43){\vector(-1,0){29}} \put(205,47){\scriptsize $3_C$}
\end{picture}
\end{eqnarray}
\vspace{8pt}

\noindent
The degree indications $2$ and $2_H$ are not shown with the arrows, only $3$ and $3_C$.
The two Heun equations  $\heunde{\fr12,\fr12,2\la,2\la}$ in the P10 lattice are different,
indicating that both \PR2 and \PR{37} appear as composition factors of P10.
The quadratic transformations $\heunde{\fr12,\fr12,\fr12,2\la}\pback{2_H}\heunde{\fr12,\fr12,2\la,2\la}$
branch over 2 of the 3 singularities with the exponent difference $1/2$, giving three choices.
One choice leads to a specialized P2 component
$\hpgde{\fr12,\fr14,\,\la}\pback4\heunde{\fr12,\fr12,\,2\la,\,2\la}$,
and two choices lead to (\ref{eq:trp37}) as indicated by the double arrow
in the lower part of (\ref{eq:trp10}).
This sheds more light on the distinction between \PR2 and \PR37 in general. 

Formally, there are always three choices for the first leg of the quartic 
Heun-to-Heun transformation (\ref{eq:quadtr4}). This quartic 
transformation is involved 
as a component of \PR{38} and \PR{53} as well.

Here are 
all compositions for the parametric Gauss-to-Heun transformations in 
Tables \ref{clasfig} and \ref{clasfig2} (and their coverings),
except for the just considered \PR2, \PR3, \PR{10}, \PR{12}, \PR{37}.
\begin{eqnarray*}
\PR4 \, (\PH{27}):
&& \hpgde{\fr12,\fr13,\,\la}\pback{3}\hpgde{\fr12,\,\la,\,2\la}
\pback{2}\heunde{\fr12,\fr12,\,2\la,\,4\la}; \hspace{88pt}  \\
\PR{6} \, (\PH{19}):
&& \hpgde{\fr12,\fr13,\,\la}\pback{4}\hpgde{\fr13,\,\la,\,3\la}
\pback{2}\heunde{\fr13,\fr13,\,2\la,\,6\la}; \\
\PR{7} \, (\PH{15}):
&& \hpgde{\fr12,\fr13,\,\la}\pback{4}\hpgde{\fr13,\,\la,\,3\la}
\pback{2}\heunde{\fr23,\,\la,\,\la,\,6\la}; \\
\PR{8} \, (\PH{17}):
&& \hpgde{\fr12,\fr13,\,\la}\pback{4}\hpgde{\fr13,\,\la,\,3\la}
\pback{2}\heunde{\fr23,\,2\la,\,3\la,\,3\la}; \\
\PR9 \, (\PH{48}):
&& \hpgde{\fr14,\fr14,\,\la}\pback{2}\heunde{\fr12,\fr12,\,\la,\,\la}
\pback{2_H}\heunde{\la,\,\la,\,\la,\,\la}; \\
\PR{11} \, (\PH{28}):
&& \hpgde{\fr13,\fr13,\,\la}\pback{3_C}\hpgde{\la,\,\la,\,\la}
\pback{2}\heunde{\la,\,\la,\,2\la,\,2\la}; \\
\end{eqnarray*}
\begin{eqnarray*}
\PR{13} \, (\PH{40}):
&& \hpgde{\fr12,\fr14,\,\la}\pback{2}\hpgde{\fr12,\,\la,\,\la}
\pback{2}\hpgde{\la,\,\la,\,2\la}\pback{2}\heunde{\la,\,\la,\,2\la,\,4\la}; \\
\PR{14} \, (\PH{2}): &&
\hpgde{\fr12,\fr13,\,\la}\pback{3}\hpgde{\fr12,\,\la,\,2\la}
\pback{2}\hpgde{\la,\,\la,\,4\la}\pback{2}\heunde{\la,\,\la,\,2\la,\,8\la}; \\
\PR{16} \, (\PH{25}):
&& \hpgde{\fr12,\fr14,\,\la}\pback{2}\hpgde{\fr12,\,\la,\,\la}
\pback{3}\heunde{\fr12,\,\la,\,2\la,\,3\la}; \\
\PR{17} \, (\PH{12}):
&& \hpgde{\fr12,\fr13,\,\la}\pback{3}\hpgde{\fr12,\,\la,\,2\la}
\pback{3}\heunde{\fr12,\,\la,\,2\la,\,6\la}; \\
\PR{18} \, (\PH{14}):
&& \hpgde{\fr12,\fr13,\,\la}\pback{3}\hpgde{\fr12,\,\la,\,2\la}
\pback{3}\heunde{\fr12,\,2\la,\,3\la,\,4\la}, \\
&& \hpgde{\fr12,\fr13,\,\la}\pback{2}\hpgde{\fr13,\fr13,\,2\la}
\pback{\BT4}\heunde{\fr13,\fr13,\,2\la,\,6\la}; \\
\PR{21} \, (\PH{25}):
&& \hpgde{\fr12,\fr13,\,\la}\pback{2}\hpgde{\fr13,\fr13,\,2\la}
\pback{3}\heunde{\fr13,\fr23,\,2\la,\,4\la}; \\
\PR{22} \, (\PH{39}):
&& \hpgde{\fr12,\fr16,\,\la}\pback{2}\hpgde{\fr13,\,\la,\,\la}
\pback{3}\heunde{\la,\,\la,\,2\la,\,2\la}, \\
&& \hpgde{\fr12,\fr16,\,\la}\pback{3}\heunde{\fr12,\fr12,\,\la,\,2\la}
\pback{\HT2}\heunde{\la,\,\la,\,2\la,\,2\la}; \\
\PR{23} \, (\PH{20}):
&& \hpgde{\fr12,\fr14,\,\la}\pback{2}\hpgde{\fr12,\,\la,\,\la}
\pback{4}\heunde{\la,\,\la,\,3\la,\,3\la}, \\
&& \hpgde{\fr12,\fr14,\,\la}\pback{\AT4}\heunde{\fr12,\fr12,\,\la,\,3\la}
\pback{2_H}\heunde{\la,\,\la,\,3\la,\,3\la}; \\
\PR{24} \, (\PH{3}): &&
\hpgde{\fr12,\fr13,\,\la}\pback{3}\hpgde{\fr12,\,\la,\,2\la}
\pback{4}\heunde{\la,\,2\la,\,3\la,\,6\la}, \\ &&
\hpgde{\fr12,\fr13,\,\la}\pback{4}\hpgde{\fr13,\,\la,\,3\la}
\pback{3}\heunde{\la,\,2\la,\,3\la,\,6\la}; \\
\PR{38} \, (\PH{28}):
&& \hpgde{\fr12,\fr13,\,\la}\pback{3_C}\heunde{\fr12,\fr12,\fr12,\,3\la}
\pback{2_H}\heunde{\fr12,\fr12,\,3\la,\,3\la}; \\
\PR{39} \, (\PH{43}):
&& \hpgde{\fr12,\fr14,\,\la}\pback{3}\heunde{\fr12,\fr12,\fr14,\,3\la}
\pback{2_H}\heunde{\fr14,\fr14,\,3\la,\,3\la}; \\
\PR{41} \, (\PH{20}):
&& \hpgde{\fr12,\fr13,\,\la}\pback{2}\hpgde{\fr13,\fr13,\,2\la}
\pback{4}\heunde{\fr13,\fr13,\,4\la,\,4\la}, \\
&& \hpgde{\fr12,\fr13,\,\la}\pback{\AT4}\heunde{\fr12,\fr12,\fr13,\,4\la}
\pback{2_H}\heunde{\fr13,\fr13,\,4\la,\,4\la}; \\
\PR{43} \, (\PH{4}): &&
\hpgde{\fr12,\fr13,\,\la}\pback{6}\heunde{\fr12,\fr12,\,\la,\,5\la}
\pback{2_H}\heunde{\la,\,\la,\,5\la,\,5\la}; \\
\PR{52} \, (\PH{38}):
&& \hpgde{\fr12,\fr13,\,\la}\pback{2}\hpgde{\fr13,\fr13,\,2\la}
\pback{3_C}\heunde{\fr13,\fr13,\fr13,\,6\la}; \\
\PR{53} \, (\PH{6}):
&& \hpgde{\fr12,\fr13,\,\la}\pback{4}\hpgde{\fr13,\,\la,\,3\la}
\pback{3_C}\heunde{3\la,\,3\la,\,3\la,\,3\la}, \\
&& \hspace{-16pt} \hpgde{\fr12,\fr13,\la}\pback{3_C}\heunde{\fr12,\fr12,\fr12,3\la}
\pback{2_H}\heunde{\fr12,\fr12,3\la,3\la}\pback{2_H}\heunde{3\la,3\la,3\la,3\la}; \\
\PR{54} \, (\PH{38}):
&& \hpgde{\fr12,\fr16,\,\la}\pback{2}\hpgde{\fr13,\,\la,\,\la}
\pback{3_C}\heunde{\la,\,\la,\,\la,\,3\la}; \\
\PR{55} \, (\PH{1}): &&
\hpgde{\fr12,\fr13,\,\la}\pback{4}\hpgde{\fr13,\,\la,\,3\la}
\pback{3_C}\heunde{\la,\,\la,\,\la,\,9\la}.
\end{eqnarray*}


Finally, we present a few exemplifying formulas for the compositions with $2_H$
that cannot be obtained by composing with Gauss-to-Gauss transformations.
The additional Heun-to-Heun transformation changes the $t$-parameter except for P9.
\\

\noindent
$\PR{9}:\hpgde{1/4,1/4,\,\la}\stackrel{4}{\longleftarrow}\heunde{\la,\,\la,\,\la,\,\la}$,
with $\displaystyle \PH{48}$:
\begin{eqnarray*}
  \heun{-1}{0} {4a,\,2a+\frac{1}{2}}{2a+\frac{1}{2}
 ;\,2a+\frac{1}{2} }{x} = \left( 1-ix \right)^{-4a} 
 \hpg{2}{1}{ a,\,a+\frac{1}{4}\,}{2a+\frac{1}{2}} 
 {\frac{8ix(x^2-1)}{(x+i)^4}};
\end{eqnarray*}
\\
 $\PR{37}:\hpgde{1/2,1/4,\,\la}\stackrel{4}{\longleftarrow}\heunde{1/2,1/2,\,2\la,\,2\la}$,
with $\displaystyle \PH{35}$:
\begin{eqnarray*}
\heun{17+12\sqrt{2}}{\frac{2a(1+8a)}{3-2\sqrt2}}
{4a,\,4a+\frac{1}{2}}{\frac{1}{2};\,4a+\frac{1}{2} } {\frac{x}{3-2\sqrt{2}}}
= \left(1+x\right)^{-4a} 
\hpg{2}{1}{ a,\,a+\frac{1}{4}\,}{\frac{1}{2}}{\frac{16x(x-1)^2}{(x+1)^4}};
\end{eqnarray*}
 \\
 $\PR{38}:\hpgde{1/2,1/3,\,\la}\stackrel{6}{\longleftarrow}\heunde{1/2,1/2,\,3\la,\,3\la}$,
with $\displaystyle \PH{28}(x)=\frac{36x\left(x^2+3\right)^2}{(x^2+6x-3)^3}$:
\begin{eqnarray*}
  \heun{4\sqrt3-7}{\frac{3a(1+12a)}{3+2\sqrt3}}
 {6a,\,6a+\frac{1}{2}}{\frac{1}{2} ;\,6a+\frac{1}{2} }{\frac{x}{3+2\sqrt{3}}}=
  \left(1-2x-\frac{x^2}{3}\right)^{-3a}
 \hpg{2}{1}{ a,\,a+\frac{1}{3}\,}{\frac{1}{2} }{\PH{28}(x)};
\end{eqnarray*}
 \\
$\PR{39}:\hpgde{1/2,1/4,\,\la}\stackrel{6}{\longleftarrow}\heunde{1/4,1/4,\,3\la,\,3\la}$,
with $\displaystyle \PH{43}(x)=\frac{108x(x-1)^4}{(x^2+14x+1)^3}$:
\begin{eqnarray*}
  \heun{97+56\sqrt3}{\frac{9a(1+24a)}{14-8\sqrt3}}
 {6a,\,6a+\frac{1}{4}}{\frac{3}{4};\,6a+\frac{1}{4} }{\frac{x}{4\sqrt{3}-7}}=
 \left(1+14x+x^2 \right)^{-3a}
 \hpg{2}{1}{ a,\,\frac{1}{4}-a}{\frac{3}{4}}{\PH{43}(x)};
\end{eqnarray*}
 \\
$\PR{43}:\hpgde{1/2,1/3,\,\la}\stackrel{12}{\longleftarrow}\heunde{\la,\,\la,\,5\la,\,5\la}$,
with $\displaystyle\PH{4}(x)=\frac{1728x^5(x^2-11x-1)}{(x^4-12x^3+14x^2+12x+1)^3}$:
\begin{eqnarray*}
\! \heun{\! \frac{-123+55\sqrt{5}}{2}}{\frac{12a(1+60a)}{11+5\sqrt{5}}}
{12a,\,2a+\frac{5}{6}}{10a+\frac{1}{6};2a+\frac{5}{6} }{\frac{2x}{11\!+\!5\sqrt{5}}}
\! =\! \left(1+12x+14x^2-12x^3+x^4\right)^{\!-3a}
 \hpg{2}{1}{ \! a,a+\frac{1}{3}}{2a+\frac{5}{6}}{\PH{4}(x)\!}.
\end{eqnarray*}
Note that the transformation P9 is not defined over $\QQ$ even if $t\in\QQ$.
Other example of this type is P11, with the same $t=-1$:
\begin{equation}
  \heun{-1}{0} {6a,\,2a+\frac{2}{3}}{4a+\frac{1}{3}
 ;\,2a+\frac{2}{3} }{x} = \left( 1-(\omega+1)x^2 \right)^{-3a} 
 \hpg{2}{1}{ a,\,a+\frac{1}{3}\,}{2a+\frac{2}{3}} 
 {\frac{3(1+2\omega)x^2(x^2-1)}{(x^2+\omega)^3}}.
\end{equation}
Here the covering is \PH{28}, the same as in \PR{38} (just above) but normalized differently.
Composition with $2_H$ 
occurs in \PR{38} but not in
$\PR{11}:\hpgde{1/3,1/3,\,\la}\stackrel{6}{\longleftarrow}\heunde{\la,\,\la,\,2\la,\,2\la}$.

\section{Appendix: Invariants of fractional-linear transformations}

Here we consider the action of the fractional-linear transformations of Appendix A
on Heun equations and functions, and give some invariants of it.
A the end, Theorem \ref{th:tinv} proves sufficient conditions for Heun's equation 
to be reducible to a hypergeometric equation by the considered parametric transformations. 

Additional invariants of the fractional-linear transformations are needed not only to determine
the accessory parameter $q$, but also to ensure that the $t$-value is in a right correspondence
with the assignment of local exponent differences to the four singular points. 
For example, the permutation of the singularities $x=0$ and $x=1$ 
changes the $t$-value to $1-t$;  hence we have
$\heunoppa{t}{q}{a,b}{c;d}{x}$ and \mbox{$\heunoppa{1-t}{ab-q}{a,b}{d;c}{1-x}$}
in the same orbit, but generally not $\heunoppa{t}{\tilde{q}}{a,b}{d;c}{\tilde{x}}$ 
for some $\tilde{q},\tilde{x}$.

When talking about equal (or different) local exponent differences in this appendix,
we mean equal up to multiplication by $-1$ (or different even after multiplication of some by $-1$).

\begin{theorem} \label{th:invs}
Consider Heun's equation $E_0$ as in $(\ref{Heun})$, and let 
\begin{equation}
e_0=1-c, \qquad e_1=1-d, \qquad e_t=c+d-a-b, \qquad e_{\infty}=b-a
\end{equation}
denote the local exponent differences.
\begin{enumerate}
\item If some two local exponent differences are equal,
there is a fractional-linear transformation of $E_0$ 
with the same parameters $a,b,c,d$, but with a different $t$-parameter.
\item If three local exponent differences are equal,
there are fractional-linear transformations of $E_0$
with the same parameters $a,b,c,d$ and any $t$ in the orbit $(\ref{eq:allts})$.
\item The following entities are invariants of the action of the fractional-linear transformations
on Heun equations (and functions):
\begin{itemize}
\item The elementary symmetric functions $E_1,E_2,E_3,E_4$ in 
the squares $e_0^2,e_1^2,e_t^2,e_{\infty}^2$, determined by the polynomial identity
\begin{equation}
X^4-E_1X^3+E_2X^2-E_3X+E_4=(X-e_0^2)(X-e_1^2)(X-e_t^2)(X-e_\infty^2).
\end{equation}
\item The $j$-invariant $j(t)$ as in $(\ref{eq:invj0})$.
\item If $j\neq 0$, the values
\begin{eqnarray} \label{eq:invk1}
k_1 \equal \frac{t^2-t+1}{t(t-1)}
\left((e_0^2-e_1^2)(e_t^2-e_\infty^2)\,t-(e_0^2-e_t^2)(e_1^2-e_\infty^2)\right),\\
\label{eq:invk2}
k_2 \equal \frac{(e_0^2e_t^2+e_1^2e_\infty^2)\,t^2
-(e_0^2e_\infty^2+e_1^2e_t^2)t+e_0^2e_1^2+e_t^2e_\infty^2}{t^2-t+1}+e_0^2e_\infty^2+e_1^2e_t^2.
\end{eqnarray}
\item 
The value $Q_1=J_1Q_0$, where
\begin{eqnarray} \label{eq:invj1}
\hspace{-20pt} J_1 \equal \frac{(t+1)\,(t-2)\,(2t-1)\,(t^2-t+1)}{t^2\,(t-1)^2},\\ 
\hspace{-20pt} \nonumber 
Q_0\equal  12q-6\,(e_0-1)\,(e_1\,t+e_t)
+\left(e_{\infty}^2-2(e_0-1)(e_0-2)\right)(t+1)-e_1^2\,(2t-1)+e_t^2\,(t-2).
\end{eqnarray}
\end{itemize}
\item The invariants $j,k_1$ determine the $t$-values corresponding to 
an orderly assignment  $(e_0,e_1,e_t,e_\infty)$ of the exponent differences
to the singular points, except when $j\in\{0,1728\}$ or
\begin{equation} \label{eq:nodsing}
j=\frac{1728\,F_4^3}{(E_2^3-9F_6)^2}, \qquad
k_1=-\frac32\,\frac{F_4^2}{E_2^3-9F_6},
\end{equation}
with $F_4=E_2^2-3E_1E_3+12E_4$,
$F_6=\frac12E_1E_2E_3-\frac32E_3^2-\frac32E_1^2E_4+4E_2E_4$.
\item If a pair of local exponent differences is equal, 
the exceptional case in \refpart{d} has $j=1728$.
\item Algebraic relations between $j,k_1,k_2$ are generated by these generic
identifications of the $j$-invariant:
\begin{eqnarray} \label{eq:jk1k2}
\frac{j}{256}=\frac{-k_1^2+9k_1k_2-6E_2k_1-9F_4}{3k_2^2-4E_2k_2+4(E_1E_3-4E_4)}
=k_1\,\frac{3k_1k_2-2E_2k_1-3F_4}{F_4k_2-2\widetilde{F}_6},
\end{eqnarray}
where $\widetilde{F}_6=\frac12E_1E_2E_3-\frac92E_3^2-\frac92E_1^2E_4+16E_2E_4$.
\item The invariants $k_1,k_2$ determine the $t$-values corresponding to 
an orderly assignment  $(e_0,e_1,e_t,e_\infty)$ of the exponent differences.
\item If all local exponent differences are different, then $t$ is unique:
\begin{equation} \label{eq:tk1k2}
t=\frac12+\frac{k_1\left(k_2-(e_0^2+e_t^2)(e_1^2+e_\infty^2)\right)
\left(k_2-(e_0^2+e_\infty^2)(e_1^2+e_t^2)\right)-F_4(k_2-e_0^2e_1^2-e_t^2e_\infty^2)+\widetilde{F}_6}
{(e_0^2-e_1^2)(e_0^2-e_t^2)(e_0^2-e_\infty^2)(e_1^2-e_t^2)(e_1^2-e_\infty^2)(e_t^2-e_\infty^2)}.
\end{equation} 
If there is exactly one pair of equal exponent differences, there are two corresponding $t$-values.
They are determined by {\rm (\ref{eq:invk1})} or by {\rm (\ref{eq:invk2})}.
\end{enumerate}
\end{theorem}
\begin{proof} As mentioned in Appendix A, the fractional-linear transformations 
permute the exponent differences and multiply them by $-1$. The transformations
that leave $t$ invariant are those that multiply the exponent differences by $-1$ and
interchange them in two pairs; see (\ref{frlinha}) and (\ref{frlintinv}).
These transformations leave $e_0^2e_1^2+e_t^2e_\infty^2$, $e_0^2e_t^2+e_1^2e_\infty^2$, 
$e_0^2e_\infty^2+e_1^2e_t^2$ invariant as well.

Part \refpart{a} is demonstrated by the first three equalities in (\ref{frlinha}) 
in the cases $e_1=e_t$, $e_1=e_\infty$, $e_t=e_\infty$, respectively. 
Part \refpart{b} is demonstrated by all formulas (\ref{frlinha}) in the case $e_1=e_t=e_\infty$. 
Other possible equalities of exponent differences are obtained
by applying the transformations that leave $t$ invariant. 

The invariants $E_1,E_2,E_3,E_4$ and $j(t)$ are clear. To obtain other invariants 
that do not involve $q$, we consider the shortened orbit sums
\begin{eqnarray*}
S_1\equal (e_0^2e_1^2+e_t^2e_\infty^2)\left(\frac1t+\frac1{1-t}\right)
+(e_0^2e_t^2+e_1^2e_\infty^2)\left(t+\frac{t}{t-1}\right)
+(e_0^2e_\infty^2+e_1^2e_t^2)\left(1-t+\frac{t-1}{t}\right),\\
S_2\equal (e_0^2e_1^2+e_t^2e_\infty^2) \!\left( \frac1{t^2}+\frac1{(1-t)^2} \!\right)\!
+(e_0^2e_t^2+e_1^2e_\infty^2) \!\left(\! t^2\!+\frac{t^2}{(t-1)^2} \!\right)\!
+(e_0^2e_\infty^2+e_1^2e_t^2) \!\left(\! (1-t)^2\!+\frac{(t-1)^2}{t^2}\right)\!.
\end{eqnarray*}
Then $k_1=S_1-E_2$, $k_2=256(S_2+E_2)/j$, adjusted for brevity.
To obtain $Q_1$, let $S_3$ denote the full orbit sum of $192\times2$ values of the product $t\,q$.
Then
\begin{equation}
Q_1=\frac{3}{16}S_3-\frac{j(E_1+8)}{256}.
\end{equation}
To check the invariance of $Q_1$ directly,
it is useful to note this general action of the fractional-linear transformations
on the ``semi-invariant" $Q_0$:
\begin{equation} \label{eq:q0act}
\begin{array}{ll}
Q_0\mapsto Q_0, & \mbox{if $t$ remains the same};\\
Q_0\mapsto -Q_0, & \mbox{if $t$ is transformed to $1-t$};\\
Q_0\mapsto Q_0/t, & \mbox{if $t$ is transformed to $1/t$};\\
Q_0\mapsto -Q_0/t, & \mbox{if $t$ is transformed to $(t-1)/t$};\\
Q_0\mapsto Q_0/(1-t), & \mbox{if $t$ is transformed to $t/(t-1)$};\\
Q_0\mapsto Q_0/(t-1), & \mbox{if $t$ is transformed to $1/(1-t)$}.
\end{array}
\end{equation}

Now consider part \refpart{d}. The algebraic relation between $j,k_1$ is obtained by eliminating $t$
from (\ref{eq:invj0}) and (\ref{eq:invk1}). It is of degree 6 in $k_1$, naturally. 
Computation shows that the discriminant with respect to $k_1$ vanishes only for $j\in\{0,1728\}$
and for the $j$-value in (\ref{eq:nodsing}). Non-vanishing discriminant gives a one-to-one correspondence
between the $t$ and $k_1$ values for the same $j$-invariant. The ambiguous case
(\ref{eq:nodsing}) represents a nodal singularity on the plane algebraic curve
defined by the relation between $j,k_1$. It does not distinguish the following $t$-values:
\begin{equation} \label{eq:t1t2}
t_1=-\frac{E_2-3e_0^2e_\infty^2-3e_1^2e_t^2}{E_2-3e_0^2e_t^2-3e_1^2e_\infty^2}, \qquad
t_2=-\frac{E_2-3e_0^2e_1^2-3e_t^2e_\infty^2}{E_2-3e_0^2e_\infty^2-3e_1^2e_t^2}.
\end{equation}
If a pair of local exponent differences is equal, these $t$-values are in $\{-1,2,1/2\}$,
showing part \refpart{e}.

Part \refpart{f} and formula (\ref{eq:tk1k2}) follow from Gr\"obner basis computations that 
eliminate $t$ and $j$. Evidently, $t$ is determined uniquely by the ordered tuple $(e_0,e_1,e_t,e_\infty)$
when the local exponent differences  are not equal, and there are at least two fitting $t$-values 
when there is an equality by part \refpart{a}. Formula (\ref{eq:invk1}) becomes quadratic in $t$
in the latter case. Parts \refpart{g} and \refpart{h} follow.
\end{proof}

The invariant $Q_1$ clearly determines $q$ unless $j\in\{0,1728\}$.
On the other hand, the encountered Heun equations with $j\in\{0,1728\}$ 
all have $Q_0=0$, thus investigation of additional invariants is not needed.
Generally, the expression $Q_0$ with $j\in\{0,1728\}$ might change
even if $t$ remains the same, contrary to the gist of (\ref{eq:q0act}).
For example, if $t=-1$ then $t=1/t$ but $Q_0\mapsto -Q_0$.
The action on $Q_0$ is then determined not only by the action on $t$,
but also by the permutation of $e_0^2,e_1^2,e_t^2,e_{\infty}^2$.

As it turns out, complications with $k_1,k_2$ for $j\in\{0,1728\}$ do not have to be considered either,
partly because the encountered Heun equations with $j\in\{0,1728\}$ have some
exponent differences equal. The ambiguous case (\ref{eq:nodsing}) is bound to happen
for most other encountered Heun equations, because they have a free parameter.


The invariants can be expressed in terms of the parameters $a,b,c,d$ rather then
the exponent differences $e_0,e_1,e_t,e_\infty$. For example,
the invariant $Q_1$ can be computed using
\begin{eqnarray} \label{eq:invq}
Q_0\equal 12q-6ab+(a^2+b^2-2cd+2c+2d-1)(2t-1) \nonumber\\
&& -(c^2+2ad+2bd)(t-2)-(d^2+2ac+2bc)(t+1).
\end{eqnarray}


\begin{table} \small
\begin{tabular}{@{}llll@{}}
\hline 
Id & Invariant $Q_1$ &  Invariant $k_1$ & Invariant $k_2$ \\ %
\hline
\PR{25} & $\frac{3^5\cdot7}{2^5\,5^2}(72\alpha^2-1)$
 & $\frac73 \left(81\la^4-\frac{173}{16}\la^2+\frac14\right)$
 & $\frac{100}{189} \left(81\la^4+\frac{121}{16}\la^2+\frac14\right)$ \\
\PR{26} & $\frac{7\cdot 13\cdot 17\cdot 37\cdot 41}{2^7\,3^6\,5^4}(1881\la^2-28)$
 & $-\frac{481}{400}\left( 25\la^4-\frac{229}9\la^2+\frac{4}{81} \right)$ 
 & $\frac{81}{481}\left( 25\la^4+\frac{70549}{729}\la^2+\frac{4}{81} \right)$  \\
\PR{27} & $\frac{7^2\cdot 23\cdot 41\cdot 79\cdot 6481}{2^{11}\,3^8\,5^2}(1696\la^2-83)$
 & $-\frac{6481}{81} \! \left( 36\la^4-\frac{643}{256}\la^2+\frac{1}{64} \right)$ 
 & $\frac{6400}{6481}\left( 36\la^4+\frac{52493}{20480}\la^2+\frac{1}{64} \right)$  \\
\PR{28} & $\frac{7\cdot 23\cdot 41\cdot 79\cdot 6481}{2^7\,3^{10}\,5^2}(22977\la^2-440)$
 & $\frac{6481}{144}\la^2 \left(691\la^2-\frac{43}9\right)$
 & $\frac{1}{6481}\la^2 \left(2726825\la^2+\frac{166577}9\right)$ \\
\PR{29} & $\frac{2^2\cdot 7\cdot 13\cdot 23\cdot 29\cdot 97}{3^8\,5^4}(1215{\la}^2-16)$
 & $\frac{679}9\la^2 \left(17\la^2-\frac{404}{225}\right)$
  & $\frac{20}{679}\la^2\!\left(3775\la^2+\frac{3274}9\right)$ \\
\PR{30} & $\frac{5^2\cdot 11\cdot 13\cdot 29\cdot 757}{2^5\,3^8\,7^2}(2952\la^2-71)$
 & $\frac{757}{27} \left(144\la^4-\frac{715}{144}\la^2+\frac1{36}\right)$
  & $\frac{784}{757} \left(144\la^4+\frac{131365}{28224}\la^2+\frac1{36}\right)$ \\
\PR{31} & $\frac{7\cdot 11\cdot 37\cdot 59\cdot 127}{2^{11}\,3^8\,5^2}(14004\la^2-323)$
 & $-\frac{889}{216} \left(4\la^4-\frac{52}{9}\la^2+\frac1{36}\right)$
  & $\frac{100}{889} \left(4\la^4+\frac{970}{9}\la^2+\frac1{36}\right)$ \\
\PR{32} & $\frac{7\cdot 11\cdot 37\cdot 59\cdot 127}{2^9\,3^{8}\,5^2}(4014\alpha^2-95)$ 
 & $889\la^2\left(\la^2-\frac1{162}\right)$ & $\frac{36}{889}\la^2\left(8450\la^2+\frac{5641}{162}\right)$ \\
\PR{33} & $\frac{7\cdot 11\cdot 23\cdot 61\cdot 131\cdot 2287}{2^{15}\,3^2\,5^6}(12580\la^2-237)$
 & $\frac{16009}{40}\la^2\left(23\la^2-\frac{63}{100}\right)$
 & $\frac{250}{16009}\la^2\left(15309\la^2+\frac{40709}{100}\right)$ \\
\PR{34} & $\frac{7\cdot 11\cdot 23\cdot 61\cdot 131\cdot 2287}{2^{13}\,3^2\,5^8}(31450\la^2\!-\!1953)$
 & $-\frac{16009}{100}\la^2\left(7\la^2-\frac{3}{100}\right)$
 & $\frac{8}{16009}\la^2\left(88749\la^2+\frac{40621}{100}\right)$ \\
\PR{35} & $\frac{5\cdot 13\cdot 17\cdot 113\cdot 4993}{2^9\,3^{10}\,7^4}(8262{\la}^2-67)$
 & $\frac{4993}9\la^2\!\left(\la^2-\frac{17}{882}\right)$
 & $\frac{2}{4993}\la^2\!\left(672868\la^2+\frac{64153}9\right)$ \\
\PR{36} & $\frac{11\cdot 19\cdot 23\cdot 61\cdot 157\cdot 1459}{2^{13}\,3^8\,5^6\,7^2}(50184{\la}^2\!-\!635)$
 & $\frac{27721}{8000} \left(100\la^4-\frac{1849}{324}\la^2+\frac{1}{36}\right)$
 & $\frac{35721}{27721} \! \left(100\la^4+\frac{4322681}{1285956}\la^2+\frac{1}{36}\right)$ \\
\PR{37} & $\frac{3^5\cdot 7\cdot 11}{2^5}(4\alpha^2-1)$ 
 & $-33\left(\la^2-\frac14\right)^{2}$ & not needed \\
\PR{38} & $\frac{3^3\cdot 5\cdot 11}{2^4}(4\la^2-1)$
 & $\quad\,15\left(\la^2-\frac14\right)^{2}$ & not needed \\
\PR{39} & $\frac{7\cdot 193\cdot 383}{2^8\cdot 3}(176\la^2-71)$
 & $-193\left(\la^2-\frac1{16}\right)^{2}$ & not needed \\
\PR{40} & $\frac{5\cdot 53\cdot 109}{3^5}(63\la^2-23)$
 & $\qquad 53\left(\la^2-\frac19\right)^{2}$ & not needed \\
\PR{42} & $\frac{3\cdot 5\cdot 17\cdot 31}{2^5}(12\la^2-1)$
 & $-51\la^2\left(4\la^2-\frac14\right)$ & not needed \\
\PR{43}  & $\frac{2^5\cdot11\cdot 31\cdot 251}{5^3}(58\la^2-1)$ & $\;\;\,71424\la^4$ & not needed \\
\PR{44} & $\frac{3^5\cdot 13\cdot 17}{2^8\,5^4}(1-16\la^2)$ 
 & $39\left(\la^2-\frac1{16}\right)\left(\la^2-\frac1{400}\right)$ & not needed \\
\PR{45} & $\frac{11\cdot 109\cdot 157}{2^4\,5^6}(5-356\la^2)$
 & $-\frac{109}5\!\left(\la^2-\frac1{4}\right)\! \left(\la^2-\frac1{100}\right)$ & not needed \\
\PR{46} & $\frac{3^2\cdot 11\cdot 109\cdot 157}{2^2\,5^8}(4-75\la^2)$
 & $\frac{327}{25}\la^2\left(\la^2-\frac1{25}\right)$ & not needed \\
\PR{47} & $\frac{5\cdot 19\cdot 43}{2^3\,3^8}(11-36\la^2)$
 & $\frac{95}{972} \left(\la^2-\frac1{4}\right)$ & not needed \\
\PR{48} & $\frac{7\cdot 47\cdot 337}{2^7\,3^{8}}(44\la^2-1)$
 & $\frac{47}9 \left(\la^2-\frac1{4}\right) \left(\la^2-\frac1{36}\right)$ & not needed \\
\PR{49} & $\frac{7\cdot 47\cdot 337}{2^5\,3^{10}}(2-513\la^2)$
 & $-\frac{329}9\la^2\left(\la^2-\frac1{9}\right)$ & not needed \\ 
\PR{50} & $\frac{2\cdot 5\cdot 11\cdot 239\cdot 251}{3^{12}}(4-27{\la}^2)$
 & $\frac{239}{81}\la^2 \left(\la^2-\frac1{9}\right)$ & not needed \\
\PR{56} & $\frac{2\cdot 5\cdot 11\cdot 13\cdot 103}{3^5\,7^4}(1-153\la^2)$
 & $-22\left(\la^2-\frac1{9}\right)\! \left(\la^2-\frac1{441}\right)$ & not needed \\
\PR{57} & $\frac{3\cdot 5\cdot 11\cdot 13\cdot 37\cdot 43}{2^{15}\,7^3}(1-244\la^2)$
 & $-\frac{645}8\la^2\left(\la^2-\frac1{4}\right)$ & not needed \\
\PR{58} & $\frac{5\cdot 19\cdot 269\cdot 499}{2^{19}\,3^5}(68\la^2+3)$
 & $\frac{269}{64}\la^2\left(\la^2-\frac1{4}\right)$ & not needed \\
\PR{59} & $\frac{11\cdot 71\cdot 167}{2^{13}\,3^5\cdot 5}(17-72\la^2)$
 & $\frac{355}{2304}\left(\la^2-\frac1{9}\right)$ & not needed \\
\PR{60} & $\frac{229-53i}{2\cdot 5^2} \left( \frac{4-i}{16}-\left(10-7i \right)\la^2 \right)$
 & $\frac{2+3i}{2}\left(16\la^4-\frac{28+9i}{8}\la^2+\frac1{64}\right)$
 & $\frac{17-6i}{13}\left(16\la^4+\frac{9+12i}{5}\la^2+\frac1{64}\right)$  \\
\PR{61} & $\frac{531-6130\omega}
{2^33^3(2-\omega)^2} \left( \frac{5-4\omega}{36}\!-\!(19\!-\!8\omega)\alpha^2\right)$ 
 & $\frac{11\omega-23}{9}\!\left(9\la^4\!+\!\frac{2-21\omega}{4+8\omega}\la^2\!+\!\frac1{144}\!\right)$
 & $\frac{12-20\omega}{15+19\omega}\!\left(9\la^4\!-\!\frac{157+34\omega}{10+16\omega}\la^2\!+\!\frac1{144}\!\right)$ 
\\ \hline
\end{tabular}  \centering
\caption{Invariants for sufficient identification of reducible Heun equations.}  \label{heunq}
\end{table}

The following theorem formulates the sufficient conditions for Theorem \ref{th:t}.
\begin{theorem}\label{th:tinv}
Heun's equation $(\ref{Heun})$ is (a specialization of a) parametric
pull-back transformation of a hypergeometric equation if it satisfies
one of the conditions \refpart{i}--\refpart{v} of Theorem $\ref{th:t}$,
and the following respective conditions:
\begin{itemize}
\item[\refpart{i}] $Q_0=0$, and $k_1=3(\la^2-\lb^2)(\la^2-\lc^2)$ for $\heunde{\la,\la,\lb,\lc}$;
\item[\refpart{ii}] $Q_0=0$;
\item[\refpart{iii}] $Q_1=\frac{5\cdot 7\cdot 13}{2^5\,3^2}(4\la^2+8\beta^2-3)$,
$k_1=-13\,(\la^4-\frac54\la^2\lb^2+\frac1{16}\lb^2)$,
$k_2=\frac{36}{13}\la^4+5\la^2\lb^2+\frac5{13}\la^2+\frac9{52}\lb^2$;
\item[\refpart{iv}]  $Q_1=\frac{5\cdot 7\cdot 17\cdot 73}{2^5\,3^4}(23{\la}^2+23{\lb}^2-4)$,
$k_1=\frac{73}2\,(\la^4-4\la^2\lb^2+\lb^4)$,
$k_2=\frac{324}{73}\,(\la^4+\frac{274}{81}\la^2\lb^2+\lb^4)$ \\
for $\heunde{\la,2\la,\lb,2\lb}$, and \\
$Q_1=\frac{5\cdot 7\cdot 17\cdot 73}{2^4\,3^4}(23{\la}^2+23{\lb}^2-2)$,
$k_1=-73\,(\la^4-10\la^2\lb^2+\lb^4)$,
\mbox{$k_2=\frac{576}{73}\,(\la^4+\frac{185}{16}\la^2\lb^2+\lb^4)$} \\
for $\heunde{\la,3\la,\lb,3\lb}$;
\item[\refpart{v}]
for the cases \PR{25} to \PR{36}, the invariants $Q_1,k_1,k_2$ are as in Table $\ref{heunq}$;\\
for the cases \PR{37}--\PR{40}, \PR{42}--\PR{50}, \PR{56}--\PR{59},
 the invariants $Q_1,k_1$ are as in Table $\ref{heunq}$;
\item[\refpart{vi}]
the invariants $Q_1,k_1,k_2$ are as in Table $\ref{heunq}$,
or conjugated $i\mapsto -i$, $\omega\mapsto-\omega-1$ if the $j$-invariant
of Table $\ref{heunred}$ is conjugated.
\end{itemize}
\end{theorem}
\begin{proof}
If $j=0$ as in \refpart{ii}, the invariants $Q_1,k_1,k_2$ generally fail.
But for the encountered Heun equations with $j=0$, part \refpart{b} of Theorem \ref{th:invs} applies,
and the semi-invariant value $Q_0=0$ determines the accessory parameter.

If $j=1728$ as in \refpart{i}, the semi-invariant value $Q_0=0$ determines the accessory parameter
just as well. The invariant $k_1$ has only two possible values: $3(\la^2-\lb^2)(\la^2-\lc^2)$ and
$-\frac32(\la^2-\lb^2)(\la^2-\lc^2)$. The latter $k_1$-value gives a confusion between two $t$-values in
$\{-1,2,1/2\}$, but the encountered Heun equations have the former $k_1$-value. This $k_1$-value
gives an equation of the form $(t-\xi)^2=0$ and determines the correct $t\in\{-1,2,1/2\}$
without the aid from $k_2$.

In case \refpart{iv}, the two different Heun equations have to be considered separately.
Note that the transformation P24 is a specialization of both P19 and P20,
and the invariants specialize consistently to
$Q_1=\frac{5\cdot 7\cdot 17\cdot 73}{\;2^4\,3^4}(115{\la}^2-2)$,
$k_1=1679\la^4$, $k_2=\frac{36432}{73}\la^4$.

In the cases \PR{37}--\PR{40}, \PR{42}--\PR{50}, \PR{56}--\PR{59}, we have two equal 
exponent differences. The $k_1$-invariant gives then
ambiguity  only for $t\in\{-1,2,1/2\}$, while the actual $t$-values are algebraic.
Hence the invariant $k_2$ is not needed.

In all other cases, the full invariants set $Q_1,k_1,k_2$ is used.
\end{proof}

\section*{Acknowledgements}
The authors are very grateful to Robert S. Maier for sharing his knowledge of
literature and ongoing developments related to the subject of this article, and
a coordination discussion.

The first author is supported by the JSPS grant No 20740075.
Some of the calculations by the second author (GF) were partially obtained in the
Inter\-discip\-li\-na\-ry Centre for Mathematical and
Computational Modelling (ICM), War\-saw Uni\-ver\-si\-ty, wi\-thin
grant nr G34-18. Research of the second author  is partially
supported by Polish MNiSzW Grant No N N201 397937. The authors RV
and GF are grateful to  the
 organizers of the XVth Conference on Analytic Functions
 and Related Topics held in Chelm in July 2009 for the
 hospitality.

\small

\end{document}